   \providecommand{\fg}{\ifdim\lastskip>\z@\unskip\fi~\frqq}%
\numberwithin{equation}{section}
\numberwithin{figure}{section}
\theoremstyle{plain}
\newtheorem{thm}{\protect\theoremname}[section]
  \theoremstyle{definition}
  \newtheorem{defn}[thm]{\protect\definitionname}
  \theoremstyle{remark}
  \newtheorem{rem}[thm]{\protect\remarkname}
  \theoremstyle{plain}
  \newtheorem{prop}[thm]{\protect\propositionname}
  \theoremstyle{plain}
  \newtheorem{lem}[thm]{\protect\lemmaname}
  \theoremstyle{plain}
  \newtheorem{cor}[thm]{\protect\corollaryname}
  \theoremstyle{plain}
  \newtheorem{fact}[thm]{\protect\factname}
  \providecommand{\corollaryname}{Corollary}
  \providecommand{\definitionname}{Definition}
  \providecommand{\factname}{Fact}
  \providecommand{\lemmaname}{Lemma}
  \providecommand{\propositionname}{Proposition}
  \providecommand{\remarkname}{Remark}
\providecommand{\theoremname}{Theorem}
\begin{document}

\global\long\def\eps{\varepsilon}

\global\long\def\tt#1{\mathtt{#1}}

\global\long\def\tx#1{\mathrm{#1}}

\global\long\def\cal#1{\mathcal{#1}}

\global\long\def\scrib#1{\mathscr{#1}}

\global\long\def\frak#1{\mathfrak{#1}}

\global\long\def\math#1{{\displaystyle #1}}

\global\long\def\pain{{\displaystyle \left(P_{j}\right)}}


\global\long\def\pare#1{\Big({\displaystyle #1}\Big)}

\global\long\def\acc#1{\left\{  #1\right\}  }

\global\long\def\cro#1{\left[#1\right]}

\newcommandx\dd[1][usedefault, addprefix=\global, 1=]{\tx d#1}

\global\long\def\DD#1{\tx D#1}

\global\long\def\ppp#1#2{\frac{\partial#1}{\partial#2}}

\global\long\def\ddd#1#2{\frac{\tx d#1}{\tx d#2}}

\global\long\def\norm#1{\left\Vert #1\right\Vert }

\global\long\def\abs#1{\left|#1\right|}

\global\long\def\ps#1{\left\langle #1\right\rangle }

\global\long\def\crocro#1{\left\llbracket #1\right\rrbracket }

\global\long\def\ord#1{\tx{ord}\left(#1\right)}


\global\long\def\wc{\mathbb{C}}

\global\long\def\ww#1{\mathbb{#1}}

\global\long\def\wr{\mathbb{R}}

\global\long\def\wn{\mathbb{N}}

\global\long\def\wn{\mathbb{Z}}

\global\long\def\wq{\mathbb{Q}}

\global\long\def\wcc{\left(\ww C^{2},0\right)}

\global\long\def\wccc{\left(\ww C^{3},0\right)}

\global\long\def\wcn{\left(\ww C^{n},0\right)}

\global\long\def\cn{\ww C^{n},0}


\global\long\def\germ#1{\ww C\left\{  #1\right\}  }

\global\long\def\germinv#1{\ww C\left\{  #1\right\}  ^{\times} }

\global\long\def\form#1{\ww C\left\llbracket #1\right\rrbracket }

\global\long\def\forminv#1{\ww C\left\llbracket #1\right\rrbracket ^{\times}}

\global\long\def\maxid{\mathfrak{m}}

\newcommandx\jet[1][usedefault, addprefix=\global, 1=k]{\mbox{\ensuremath{\tx J}}_{#1}}

\global\long\def\hol#1{\mathcal{O}\left(#1\right)}

\newcommandx\formdiff[1][usedefault, addprefix=\global, 1=]{\widehat{\Omega^{#1}}}


\global\long\def\pp#1{\frac{\partial}{\partial#1}}

\newcommandx\der[2][usedefault, addprefix=\global, 1=\mathbf{x}, 2=]{\tx{Der}_{#2}\left(\form{#1}\right)}

\newcommandx\vf[1][usedefault, addprefix=\global, 1={\ww C^{3},0}]{\chi\left(#1\right)}

\newcommandx\fvf[1][usedefault, addprefix=\global, 1={\ww C^{3},0}]{\widehat{\chi}\left(#1\right)}

\newcommandx\sfvf[1][usedefault, addprefix=\global, 1={\ww C^{3},0}]{\widehat{\chi}_{\omega}\left(#1\right)}

\newcommandx\fisot[2][usedefault, addprefix=\global, 1=\tx{fib}]{\widehat{\tx{Isot}}_{#1}\left(#2\right)}

\global\long\def\isotsect#1#2#3{\tx{Isot}_{\tx{fib}}\left(#1;\cal S_{#2,#3};\tx{Id}\right)}

\global\long\def\ynorm{Y_{\tx{norm}}}

\global\long\def\lie#1{\cal L_{\math{#1}}}


\newcommandx\fdiff[1][usedefault, addprefix=\global, 1={\ww C^{3},0}]{\tx{Diff}_{\tx{fib}}\left(#1\right)}

\newcommandx\diff[1][usedefault, addprefix=\global, 1={\ww C^{3},0}]{\tx{Diff}\left(#1\right)}

\newcommandx\diffid[1][usedefault, addprefix=\global, 1={\ww C^{3},0}]{\tx{Diff}\left(#1;\tx{Id}\right)}

\newcommandx\fdiffid[1][usedefault, addprefix=\global, 1={\ww C^{3},0}]{\tx{Diff}_{\tx{fib}}\left(#1;\tx{Id}\right)}

\newcommandx\diffsect[2][usedefault, addprefix=\global, 1=\theta, 2=\eta]{\tx{Diff}_{\tx{fib}}\left(\cal S_{#1,#2};\tx{Id}\right)}

\newcommandx\diffform[1][usedefault, addprefix=\global, 1={\ww C^{3},0}]{\widehat{\tx{Diff}}\left(#1\right)}

\newcommandx\fdiffform[1][usedefault, addprefix=\global, 1={\ww C^{3},0}]{\widehat{\tx{Diff}}_{\tx{fib}}\left(#1\right)}

\newcommandx\fdiffformid[1][usedefault, addprefix=\global, 1={\ww C^{3},0}]{\widehat{\tx{Diff}}_{\tx{fib}}\left(#1;\tx{Id}\right)}

\newcommandx\gid[1][usedefault, addprefix=\global, 1=k]{\cal G_{\tx{Id}}^{\left(#1\right)}}

\newcommandx\diffformid[1][usedefault, addprefix=\global, 1={\ww C^{3},0}]{\widehat{\tx{Diff}}\left(#1;\tx{Id}\right)}

\newcommandx\sdiff[1][usedefault, addprefix=\global, 1={\ww C^{3},0}]{\tx{Diff}_{\omega}\left(#1\right)}

\newcommandx\sdiffid[1][usedefault, addprefix=\global, 1={\ww C^{3},0}]{\tx{Diff}_{\omega}\left(#1;\tx{Id}\right)}

\newcommandx\sdiffform[1][usedefault, addprefix=\global, 1={\ww C^{3},0}]{\widehat{\tx{Diff}}_{\omega}\left(#1\right)}

\newcommandx\sdiffformid[1][usedefault, addprefix=\global, 1={\ww C^{3},0}]{\widehat{\tx{Diff}}_{\omega}\left(#1;\tx{Id}\right)}


\global\long\def\sect#1#2#3{S\left(#1,#2,#3\right)}

\global\long\def\germsect#1#2{\cal S_{#1,#2}}

\global\long\def\asympsect#1#2{\cal{AS}_{#1,#2}}

\global\long\def\fol#1{\mathcal{F}_{#1}}


\global\long\def\rrel{\mathcal{R}}

\global\long\def\surj{\twoheadrightarrow}

\global\long\def\inj{\hookrightarrow}

\global\long\def\bij{\simeq}

\global\long\def\quotient#1#2{\bigslant{#1}{#2}}


\global\long\def\res#1{\tx{res}\left(#1\right)}

\global\long\def\param{\cal P}

\global\long\def\po{\cal P_{0}}

\global\long\def\pfib{\cal P_{\tx{fib}}}

\global\long\def\pw{\cal P_{\omega}}

\global\long\def\porb{\cal P_{\tx{orb}}}

\global\long\def\ls#1{\Gamma_{#1\lambda}}

\global\long\def\lsp#1{\Gamma'_{#1\lambda}}

\global\long\def\lsr#1#2{\Gamma_{#1\lambda}\left(#2\right)}

\global\long\def\sn{\mathbb{\cal{SN}}}

\global\long\def\sndiag{\mathbb{\cal{SN}}_{\tx{diag}}}

\global\long\def\sndiagnd{\mathbb{\cal{SN}}_{\tx{diag,nd}}}

\global\long\def\snodiag{\cal{SN}_{\tx{diag},0}}

\global\long\def\sns{\mathbb{\cal{SN}}_{\omega}}

\global\long\def\snsdiag{\mathbb{\cal{SN}}_{\mbox{\ensuremath{\tx{diag}}},\omega}}

\global\long\def\snsnd{\mathbb{\cal{SN}}_{\tx{snd}}}

\global\long\def\sno{\cal{SN}_{0}}

\global\long\def\snfib{\cal{SN}_{\tx{fib}}}

\global\long\def\snofib{\cal{SN}_{\tx{fib},0}}

\global\long\def\snorb{\cal{SN}_{\tx{orb}}}

\global\long\def\snoid{\cal{SN}_{\tx{Id},0}}

\global\long\def\snoorb{\cal{SN}_{0,\tx{orb}}}

\global\long\def\fsn{\mathbb{\widehat{\cal{SN}}}}

\global\long\def\fsns{\widehat{\mathbb{\cal{SN}}}_{\omega}}

\global\long\def\fsndiag{\mathbb{\widehat{\cal{SN}}}_{\tx{diag}}}

\global\long\def\fsnnd{\mathbb{\widehat{\cal{SN}}}_{\tx{nd}}}

\global\long\def\fsnfibnd{\mathbb{\widehat{\cal{SN}}}_{\tx{fib,nd}}}

\global\long\def\fsndiagnd{\mathbb{\widehat{\cal{SN}}}_{\tx{diag,nd}}}

\global\long\def\fsno{\cal{\widehat{SN}}_{*}}

\global\long\def\fsnfib{\cal{\widehat{SN}}_{\tx{fib}}}

\global\long\def\fsnorb{\widehat{\cal{SN}}_{\tx{orb}}}

\global\long\def\fsnoorb{\widehat{\cal{SN}}_{\tx{orb},*}}

\global\long\def\mfibid{\mathscr{M}_{\tx{fib,Id}}}

\global\long\def\mfib{\mathscr{M}_{\tx{fib}}}

\global\long\def\mspaceid{\mathscr{M}_{\tx{Id}}}

\global\long\def\mspace{\mathscr{M}}

\global\long\def\ms{\mathscr{M}_{\omega}}

\newcommand{\bigslant}[2]{{\raisebox{.3em}{$#1$}\left/\raisebox{-.3em}{$#2$}\right.}}

\title[Analytic classification of doubly-resonant saddle-nodes]{Doubly-resonant saddle-nodes in $\left(\mathbb{C}^{3},0\right)$
and the fixed singularity at infinity in the Painlevé equations. Part
III: local analytic classification}

\author{Amaury Bittmann}

\address{IRMA, Université de Strasbourg, 7 rue René Descartes, 67084 Strasbourg
Cedex, France }

\email{\href{mailto:bittmann@math.unistra.fr}{bittmann@math.unistra.fr}}
\begin{abstract}
In this work which follows directly \cite{bittmann1,bittmann2}, we
consider analytic singular vector fields in $\ww C^{3}$ with an isolated
and doubly-resonant singularity of saddle-node type at the origin.
Such vector fields come from irregular two-dimensional differential
systems with two opposite non-zero eigenvalues, and appear for instance
when studying the irregular singularity at infinity in Painlevé equations
${\displaystyle \pain_{{\scriptstyle j=I\dots V}}}$, for generic
values of the parameters. Under suitable assumptions, we provide an
analytic classification under the action of fibered diffeomorphisms,
based on the study of the \emph{Stokes diffeomorphisms} obtained by
comparing consecutive sectorial normalizing maps \emph{à la} Martinet-Ramis~/~Stolovitch
\cite{MR82,MR83,Stolo}. These normalizing maps over sectorial domains
are obtained in the main theorem of \cite{bittmann2}, which is analogous
to the classical one due to Hukuhara-Kimura-Matuda \cite{HKM} for
saddle-nodes in $\ww C^{2}$. We also prove that these maps are in
fact the Gevrey-1 sums of the formal normalizing map, the existence
of which has been proved in \cite{bittmann1}. 
\end{abstract}

\keywords{Painlevé equations, singular vector field, irregular singularity,
resonant singularity, analytic classification, Stokes diffeomorphisms.}

\maketitle

\section{Introduction}

~

As in \cite{bittmann1,bittmann2}, we consider (germs of) singular
vector fields $Y$ in $\ww C^{3}$ which can be written in appropriate
coordinates $\left(x,\mathbf{y}\right):=\left(x,y_{1},y_{2}\right)$
as 
\begin{eqnarray}
Y & = & x^{2}\pp x+\Big(-\lambda y_{1}+F_{1}\left(x,\mathbf{y}\right)\Big)\pp{y_{1}}+\Big(\lambda y_{2}+F_{2}\left(x,\mathbf{y}\right)\Big)\pp{y_{2}}\,\,\,\,\,,\label{eq: intro}
\end{eqnarray}
where $\lambda\in\ww C^{*}$ and $F_{1},\, F_{2}$ are germs of holomorphic
function in $\left(\ww C^{3},0\right)$ of homogeneous valuation (order)
at least two. They represent irregular two-dimensional differential
systems having two opposite non-zero eigenvalues and a vanishing third
eigenvalue. These we call doubly-resonant vector fields of saddle-node
type (or simply \textbf{doubly-resonant saddle-nodes}). For a historical
context, a presentation of the main motivations (the study of the
irregular singularity at infinity in Painlevé equations ${\displaystyle \pain_{{\scriptstyle j=I\dots V}}}$),
and a review of some results linked with this study, we refer to \cite{bittmann1}.

\bigskip{}

Several authors studied the problem of convergence of formal transformations
putting vector fields as in $\left(\mbox{\ref{eq: intro}}\right)$
into ``normal forms''. Shimomura, improving on a result of Iwano
\cite{Iwano}, shows in \cite{Shimo} that analytic doubly-resonant
saddle-nodes satisfying more restrictive conditions are conjugate
(formally and over sectors) to vector fields of the form
\begin{eqnarray*}
 &  & x^{2}\pp x+\left(-\lambda+a_{1}x\right)y_{1}\pp{y_{1}}+\left(\lambda+a_{2}x\right)y_{2}\pp{y_{2}}\,\,\,\,
\end{eqnarray*}
\emph{via} a diffeomorphism whose coefficients have asymptotic expansions
as $x\rightarrow0$ in sectors of opening greater than $\pi$. 

Stolovitch then generalized this result to any dimension in \cite{Stolo}.
More precisely, Stolovitch's work offers an analytic classification
of vector fields in $\ww C^{n+1}$ with an irregular singular point,
without further hypothesis on eventual additional resonance relations
between eigenvalues of the linear part. However, as Iwano and Shimomura
did, he needed to impose other assumptions, among which the condition
that the restriction of the vector field to the invariant hypersurface
$\acc{x=0}$ is a linear vector field. In \cite{DeMaesschalck}, the
authors obtain a \emph{Gevrey-1 summable} ``normal form'', though
not as simple as Stolovitch's one and not unique \emph{a priori},
but for more general kind of vector field with one zero eigenvalue.
However, the same assumption on hypersurface $\acc{x=0}$ is required
(the restriction is a linear vector field). Yet from \cite{Yoshida85}
(and later \cite{bittmann1}) stems the fact that this condition is
not met in the case of Painlevé equations ${\displaystyle \pain_{{\scriptstyle j=I\dots V}}}$. 

In comparison, we merely ask here that this restriction be orbitally
linearizable (see Definition \ref{def: asympt hamil}), \emph{i.e.}
the foliation\emph{ }induced by $Y$ on $\acc{x=0}$ (and not the
vector field $Y_{|\acc{x=0}}$ itself) be linearizable. The fact that
this condition is fulfilled by the singularities of Painlevé equations
formerly described is well-known. As discussed in Remark \ref{rem:New difficulties},
this more general context also introduces new phenomena and technical
difficulties as compared to prior classification results.

\subsection{Scope of the paper}

~

The action of local analytic~/~formal diffeomorphisms $\Psi$ fixing
the origin on local holomorphic vector fields $Y$ of type $\left(\mbox{\ref{eq: intro}}\right)$
by change of coordinates is given by
\begin{eqnarray*}
\Psi_{*}Y & := & \left(\mathrm{D}\Psi.Y\right)\circ\Psi^{-1}~.
\end{eqnarray*}
In \cite{bittmann1} we performed the formal classification of such
vector fields by exhibiting an explicit universal family of vector
fields for the action of formal changes of coordinates at $0$ (called
a family of normal forms). Such a result seems currently out of reach
in the analytic category: it is unlikely that an explicit universal
family for the action of local analytic changes of coordinates be
described anytime soon. If we want to describe the space of equivalent
classes (of germs of a doubly-resonant saddle-node under local analytic
changes of coordinates) with same formal normal form, we therefore
need to find a complete set of invariants which is of a different
nature. We call \textbf{moduli space} this quotient space and give
it a (non-trivial) presentation based on functional invariants \emph{à
la} Martinet-Ramis \cite{MR82,MR83}.

In this paper we will therefore present only the $x-$fibered local
analytic classification for vector fields of the form $\left(\mbox{\ref{eq: intro}}\right)$,
with some additional assumptions detailed further down (see Definitions
\ref{def: drsn}, \ref{def: non-deg} and \ref{def: asympt hamil}).
Importantly, these hypothesis are met in the case of Painlevé equations
mentioned above. The full analytic classification (under the action
of all local diffeomorphisms, not necessarily $x-$fibered) will be
done in a forthcoming work.

\bigskip{}

In \cite{bittmann2}, we have proved the existence of analytic sectorial
normalizing maps (over a pair of opposite ``wide'' sectors of opening
greater than $\pi$ whose union covers a full punctured neighborhood
of $\left\{ x=0\right\} $). Then we attach to each vector field a
complete set of invariants given as transition maps (over ``narrow''
sectors of opening less than $\pi$) between the sectorial normalizing
maps. Although this viewpoint has become classical since the work
of Martinet and Ramis, and has latter been generalized by Stolovitch
as already mentioned, our approach has a more geometric flavor (for
instance, we perform a precise study of the Stokes diffeomorphisms
in the \emph{space of leaves}).

As a by-product, we deduce that the normalizing sectorial diffeomorphisms
of \cite{bittmann2} are Gevrey-$1$ asymptotic to the normalizing
formal power series of \cite{bittmann1}, retrospectively proving
their $1$-summability. When the vector field additionally supports
a symplectic transverse structure (which is again the case of Painlevé
equations) we prove a theorem of analytic classification under the
action of \emph{transversally symplectic} diffeomorphisms.

\subsection{Definitions and previous results}

~

To state our main results we need to introduce some notations and
nomenclature. 
\begin{itemize}
\item For $n\in\ww N_{>0}$, we denote by $\left(\ww C^{n},0\right)$ an
(arbitrary small) open neighborhood of the origin in $\ww C^{n}$.
\item We denote by $\germ{x,\mathbf{y}}$, with $\mathbf{y}=\left(y_{1},y_{2}\right)$,
the $\ww C$-algebra of germs of holomorphic functions at the origin
of $\ww C^{3}$, and by $\germ{x,\mathbf{y}}^{\times}$ the group
of invertible elements for the multiplication (also called units),
\emph{i.e. }elements $U$ such that $U\left(0\right)\neq0$.
\item $\vf$ is the Lie algebra of germs of singular holomorphic vector
fields at the origin $\ww C^{3}$. Any vector field in $\vf$ can
be written as 
\[
Y={\displaystyle b\left(x,y_{1},y_{2}\right)\pp x+b_{1}\left(x,y_{1},y_{2}\right)\pp{y_{1}}+b_{2}\left(x,y_{1},y_{2}\right)\pp{y_{2}}}
\]
with $b,b_{1},b_{2}\in\germ{x,y_{1},y_{2}}$ vanishing at the origin.
\item $\diff$ is the group of germs of holomorphic diffeomorphisms fixing
the origin of $\ww C^{3}$. It acts on $\vf$ by conjugacy: for all
\[
\left(\Phi,Y\right)\in\diff\times\vf
\]
we define the push-forward of $Y$ by $\Phi$ by
\begin{equation}
\Phi_{*}\left(Y\right):=\left(\mbox{D}\Phi\cdot Y\right)\circ\Phi^{-1}\qquad,\label{eq: push forward intro}
\end{equation}
where $\mbox{D}\Phi$ is the Jacobian matrix of $\Phi$.
\item $\fdiff$ is the subgroup of $\diff$ of fibered diffeomorphisms preserving
the $x$-coordinate, \emph{i.e. }of the form $\left(x,\mathbf{y}\right)\mapsto\left(x,\phi\left(x,\mathbf{y}\right)\right)$.
\item We denote by $\fdiff[\ww C^{3},0,\tx{Id}]$ the subgroup of $\fdiff$
formed by diffeomorphisms tangent to the identity.
\end{itemize}
All these concepts have \emph{formal} analogues, where we only suppose
that the objects are defined with formal power series, not necessarily
convergent near the origin. 
\begin{defn}
\label{def: drsn}A \textbf{diagonal doubly-resonant saddle-node}
is a vector field $Y\in\vf$ of the form 
\begin{eqnarray}
Y & = & x^{2}\pp x+\Big(-\lambda y_{1}+F_{1}\left(x,\mathbf{y}\right)\Big)\pp{y_{1}}+\Big(\lambda y_{2}+F_{2}\left(x,\mathbf{y}\right)\Big)\pp{y_{2}}\,\,\,\,\,,\label{eq: diagonal drsn}
\end{eqnarray}
with $\lambda\in\ww C^{*}$ and $F_{1},F_{2}\in\germ{x,\mathbf{y}}$
of order at least two. We denote by $\sndiag$ the set of such vector
fields.
\end{defn}
Based on this expression, and considering the expansion 
\begin{eqnarray*}
F_{j}\left(x,\mathbf{y}\right) & = & \sum_{\substack{\mathbf{k}=\left(k_{0},k_{1},k_{2}\right)}
}F_{j,\mathbf{k}}x^{k_{0}}y_{1}^{k_{1}}y_{2}^{k_{2}}
\end{eqnarray*}
for $j=1,2$, we state:
\begin{defn}
\label{def: non-deg}The \textbf{residue} of $Y\in\sndiag$ as in
(\ref{eq: diagonal drsn}) is the complex number
\[
{\displaystyle \tx{res}\left(Y\right):=F_{1,\left(1,1,0\right)}+F_{2,\left(1,0,1\right)}}\,\,\,.
\]
We say that $Y$ is\textbf{ non-degenerate }(\emph{resp. }\textbf{strictly
non-degenerate) }if $\tx{res}\left(Y\right)\notin\ww Q_{\leq0}$ (\emph{resp.
}$\Re\left(\tx{res}\left(Y\right)\right)>0$).\end{defn}
\begin{rem}
It is obvious that there is an action of $\fdiff[\ww C^{3},0,\tx{Id}]$
on $\sndiag$. The residue is an invariant of each orbit of $\snfib$
under the action of $\fdiff[\ww C^{3},0,\tx{Id}]$ by conjugacy (it
is actually invariant by formal conjugacies, see\cite{bittmann1}).
\end{rem}
The main result of \cite{bittmann1} can now be stated as follows:
\begin{thm}
\cite{bittmann1}\label{thm: forme normalel formelle} Let $Y\in\sndiag$
be non-degenerate. Then there exists a unique formal fibered diffeomorphism
$\hat{\Phi}$ tangent to the identity such that: 
\begin{eqnarray}
\hat{\Phi}_{*}\left(Y\right) & = & x^{2}\pp x+\left(-\lambda+a_{1}x+c_{1}\left(y_{1}y_{2}\right)\right)y_{1}\pp{y_{1}}\nonumber \\
 &  & +\left(\lambda+a_{2}x+c_{2}\left(y_{1}y_{2}\right)\right)y_{2}\pp{y_{2}}\,\,\,\,,\label{eq: fibered normal form-1-2}
\end{eqnarray}
where $\lambda\in\ww C^{*}$, ${\displaystyle c_{1},c_{2}\in v\form v}$
are formal power series in $v=y_{1}y_{2}$ without constant term and
$a_{1},a_{2}\in\ww C$ are such that ${\displaystyle a_{1}+a_{2}=\tx{res}\left(Y\right)\in\ww C\backslash\ww Q_{\leq0}}$.\end{thm}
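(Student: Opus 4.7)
The plan is to build $\hat\Phi$ inductively as an infinite composition $\hat\Phi = \lim_{d \to \infty} \Phi_{d} \circ \cdots \circ \Phi_{1}$, where each $\Phi_{d}$ is the time-one flow of a formal fibered vector field $X_{d} = \phi_{1}\pp{y_{1}} + \phi_{2}\pp{y_{2}}$, tangent to the identity and homogeneous of degree $d$ in $\mathbf{y}$. Write $Y = L + N$, with $L := x^{2}\pp{x} + L_{0}$, $L_{0} := -\lambda y_{1}\pp{y_{1}} + \lambda y_{2}\pp{y_{2}}$, and $N$ of order $\geq 2$. Both $L$ and the $\mathbf{y}$-grading are preserved by the commutator bracket, and any correction introduced by $X_{d}$ only modifies $Y$ at $\mathbf{y}$-degree $\geq d$, so one can proceed degree by degree: at step $d$ one solves the cohomological equation
\begin{equation*}
[L, X_{d}] \;=\; N_{d} - Z_{d} + R_{d},
\end{equation*}
where $N_{d}$ is the $\mathbf{y}$-homogeneous part of $N$ of degree $d$, $Z_{d}$ is the targeted normal-form contribution, and $R_{d} \in \form{x, \mathbf{y}}$ collects the (already known) nonlinear feedback from normalizations at lower degrees.

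The equation diagonalizes monomially: $[L_{0}, y_{1}^{k_{1}}y_{2}^{k_{2}}\pp{y_{j}}] = \lambda(k_{2} - k_{1} + \delta_{j}) \, y_{1}^{k_{1}}y_{2}^{k_{2}}\pp{y_{j}}$, with $\delta_{1} = 1$, $\delta_{2} = -1$. Writing $X_{d} = \sum \phi_{\mathbf{k}, j}(x)\, y_{1}^{k_{1}}y_{2}^{k_{2}}\pp{y_{j}}$, one is led for each $(\mathbf{k}, j)$ to an Euler-type equation in $\form{x}$:
\begin{equation*}
x^{2}\phi'(x) + \lambda(k_{2} - k_{1} + \delta_{j})\,\phi(x) \;=\; g(x) - h(x),
\end{equation*}
where $g \in \form{x}$ is the known forcing term and $h$ is the contribution to $Z_{d}$. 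For $k_{2} - k_{1} + \delta_{j} \neq 0$, the equation has a unique solution $\phi \in \form{x}$ with $h = 0$, by the Taylor-coefficient recursion $\lambda(k_{2}-k_{1}+\delta_{j})\phi_{n} + (n-1)\phi_{n-1} = g_{n}$. The resonance $k_{2} - k_{1} + \delta_{j} = 0$ occurs exactly on the families $y_{1}(y_{1}y_{2})^{k}\pp{y_{1}}$ and $y_{2}(y_{1}y_{2})^{k}\pp{y_{2}}$, $k \geq 0$, where the equation reduces to $x^{2}\phi'(x) = g(x) - h(x)$ and the cokernel of $x^{2}\ddd{}{x}$ on $\form{x}$ is $\ww C \oplus \ww C \cdot x$, forcing $h \in \ww C \oplus \ww C \cdot x$.

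A subtlety is that the target normal form (\ref{eq: fibered normal form-1-2}) allows the full two-dimensional freedom $\ww C \oplus \ww C \cdot x$ only at $k = 0$ (yielding the coefficients $\mp\lambda + a_{j}x$), whereas for $k \geq 1$ each $v^{k}$-coefficient $c_{j,k}$ lies in $\ww C$ alone. One must verify that the $\ww C \cdot x$-part of the forcing $g$ at $k \geq 1$ vanishes identically under this normalization; this reduces to a secondary induction on powers of $v = y_{1}y_{2}$. The non-degeneracy assumption $\res{Y} \notin \wq_{\leq 0}$ enters precisely here: the associated scalar equations involve denominators of the form $k\,\res{Y} + m$ with $k \in \ww N_{>0}$ and $m \in \ww N$, all non-zero exactly under this hypothesis. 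Formal invariance of the residue under fibered conjugacy tangent to the identity moreover forces $a_{1} + a_{2} = \res{Y}$.

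Uniqueness follows by a standard leading-term argument: given two such normalizers $\hat\Phi_{1}, \hat\Phi_{2}$, the composition $\Psi := \hat\Phi_{2} \circ \hat\Phi_{1}^{-1}$ is formal, fibered, tangent to the identity, and conjugates a normal form to another normal form. Expanding $\Psi - \tx{Id}$ by $\mathbf{y}$-degree and applying the same diagonalization, each homogeneous component satisfies a homogeneous homological equation whose only tangent-to-identity solution is zero; iteration yields $\Psi = \tx{Id}$. The main obstacle throughout is the careful bookkeeping in the secondary induction: tracking how the $\ww C \cdot x$-cokernel contributions cascade across successive $v$-powers, and verifying that non-degeneracy is exactly the condition ruling out zero denominators at every step.
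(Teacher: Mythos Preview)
The paper does not prove this theorem here; it is quoted from \cite{bittmann1}, so there is no in-paper argument to compare against. Your sketch is a correct and standard route to the formal classification: the homological reduction, the identification of the resonant monomials $y_j(y_1y_2)^k\pp{y_j}$, the appearance of the denominators $m + k\cdot\res{Y}$ (with $m\in\ww N$, $k\in\ww N_{>0}$), and the uniqueness argument are all accurate.

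One remark on the ``subtlety'' you flag. The cleanest way to resolve it is not a separate secondary induction but a one-step refinement of the linear model. After the first normalization step (at $\mathbf y$-degree $1$) the coefficients $a_1,a_2$ are determined; if one then replaces $L$ by
\[
L' \;:=\; x^2\pp{x} + (-\lambda + a_1 x)\,y_1\pp{y_1} + (\lambda + a_2 x)\,y_2\pp{y_2}
\]
for all subsequent steps, the homological operator on the resonant monomial $y_j(y_1y_2)^k\pp{y_j}$ becomes $x^2\ddd{}{x} + k(a_1+a_2)\,x = x^2\ddd{}{x} + k\,\res{Y}\,x$. This sends $x^m$ to $(m + k\,\res{Y})\,x^{m+1}$, so under the non-degeneracy hypothesis its image is exactly $x\form x$ and its cokernel is $\ww C$ --- precisely the one-dimensional slot allotted to $c_{j,k}$ in the target normal form. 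With this adjustment no $\ww C\cdot x$ obstruction ever appears and your induction closes directly, without having to track how cokernel contributions cascade across successive $v$-powers.
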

\begin{defn}
The vector field obtained in $\left(\mbox{\ref{eq: fibered normal form-1-2}}\right)$
is called the \textbf{formal normal form }of $Y$. The formal fibered
diffeomorphism $\hat{\Phi}$ is called the \textbf{formal normalizing
map} of $Y$.
\end{defn}
The above result is valid for formal objects, without considering
problems of convergence. The main result in \cite{bittmann2} states
that this formal normalizing map is analytic in sectorial domains,
under some additional assumptions that we are now going to precise.
\begin{defn}
\label{def: asympt hamil}~
\begin{itemize}
\item We say that a germ of a vector field $X$ in $\left(\ww C^{2},0\right)$
is \textbf{orbitally linear} if 
\[
X=U\left(\mathbf{y}\right)\left(\lambda_{1}y_{1}\pp{y_{1}}+\lambda_{2}y_{2}\pp{y_{2}}\right)\,\,,
\]
for some ${\displaystyle U\left(\mathbf{y}\right)\in\germ{\mathbf{y}}^{\times}}$
and $\left(\lambda_{1},\lambda_{2}\right)\in\ww C^{2}$.
\item We say that a germ of vector field $X$ in $\left(\ww C^{2},0\right)$
is analytically (\emph{resp. formally}) \textbf{orbitally linearizable}
if $X$ is analytically (\emph{resp.} formally) conjugate to an orbitally
linear vector field.
\item We say that a diagonal doubly-resonant saddle-node $Y\in\sndiag$
is \textbf{div-integrable} if $Y_{\mid\acc{x=0}}\in\vf[\ww C^{2},0]$
is (analytically) orbitally linearizable. 
\end{itemize}
\end{defn}
\begin{rem}
Alternatively we could say that the foliation associated to ${\displaystyle Y_{\mid\acc{x=0}}}$
is linearizable. Since ${\displaystyle Y_{\mid\acc{x=0}}}$ is analytic
at the origin of $\ww C^{2}$ and has two opposite eigenvalues, it
follows from a classical result of Brjuno (see \cite{Martinet}),
that $Y_{\mid\acc{x=0}}$ is analytically orbitally linearizable if
and only if it is formally orbitally linearizable.\end{rem}
\begin{defn}
We denote by $\snodiag$ the set of strictly non-degenerate diagonal
doubly-resonant saddle-nodes which are div-integrable.
\end{defn}
The main result of \cite{bittmann2} can now be stated (we refer to
section $2.$ for precise definitions on weak 1-summability)). 
\begin{thm}
\cite{bittmann2}\label{Th: Th drsn}Let $Y\in\snodiag$ and let $\hat{\Phi}$
(given by Theorem \ref{thm: forme normalel formelle}) be the unique
formal fibered diffeomorphism tangent to the identity such that 
\begin{eqnarray*}
\hat{\Phi}_{*}\left(Y\right) & = & x^{2}\pp x+\left(-\lambda+a_{1}x+c_{1}\left(y_{1}y_{2}\right)\right)y_{1}\pp{y_{1}}+\left(\lambda+a_{2}x+c_{2}\left(y_{1}y_{2}\right)\right)y_{2}\pp{y_{2}}\\
 & =: & \ynorm\,\,,
\end{eqnarray*}
where $\lambda\neq0$ and ${\displaystyle c_{1}\left(v\right),c_{2}\left(v\right)\in v\form v}$
are formal power series without constant term. Then:
\begin{enumerate}
\item the normal form $\ynorm$ is analytic \emph{(i.e. ${\displaystyle c_{1},c_{2}\in\germ v}$),
}and it also is div-integrable,\emph{ }i.e. $c_{1}+c_{2}=0$;
\item the formal normalizing map $\hat{\Phi}$ is weakly 1-summable in every
direction $\theta\neq\arg\left(\pm\lambda\right)$;
\item there exist analytic sectorial fibered diffeomorphisms $\Phi_{+}$
and $\Phi_{-}$, (asymptotically) tangent to the identity, defined
in sectorial domains of the form ${\displaystyle S_{+}\times\left(\ww C^{2},0\right)}$
and ${\displaystyle S_{-}\times\left(\ww C^{2},0\right)}$ respectively,
where 
\begin{eqnarray*}
S_{+} & := & \acc{x\in\ww C\mid0<\abs x<r\mbox{ and }\abs{\arg\left(\frac{x}{i\lambda}\right)}<\frac{\pi}{2}+\epsilon}\\
S_{-} & := & \acc{x\in\ww C\mid0<\abs x<r\mbox{ and }\abs{\arg\left(\frac{-x}{i\lambda}\right)}<\frac{\pi}{2}+\epsilon}
\end{eqnarray*}
(for any ${\displaystyle \epsilon\in\left]0,\frac{\pi}{2}\right[}$
and some $r>0$ small enough), which admit $\hat{\Phi}$ as weak Gevrey-1
asymptotic expansion in these respective domains, and which conjugate
$Y$ to $\ynorm$. Moreover $\Phi_{+}$ and $\Phi_{-}$ are the unique
such germs of analytic functions in sectorial domains (see Definition
\ref{def: sectorial diff}).
\end{enumerate}
\end{thm}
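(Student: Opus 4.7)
The plan is to establish part (3) first and derive (1) and (2) as corollaries. Using div-integrability, which is preserved under analytic fibered conjugation, I would begin with an analytic preparation conjugating $Y_{|\acc{x=0}}$ to an orbitally linear vector field $U\left(\mathbf{y}\right)\left(-\lambda y_{1}\pp{y_{1}}+\lambda y_{2}\pp{y_{2}}\right)$ by an analytic diffeomorphism of $\left(\ww C^{2},0\right)$ extended fiberwise (independent of $x$). After this reduction, the remaining normalization only needs to kill $x$-dependent corrections, while the restriction to $\acc{x=0}$ is already in explicit resonant normal form.

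For Step 2, I would construct $\Phi_{+}$ and $\Phi_{-}$ as fixed points of a suitable operator on spaces of sectorial holomorphic functions. Writing $\Phi=\tx{Id}+\psi$, the conjugation equation $\Phi_{*}\left(Y\right)=\ynorm$ takes the form $L\left(\psi\right)=N\left(\psi\right)$, where $L$ is the homological operator associated with the linear part of $Y$ and $N$ gathers the nonlinear remainder. The key analytic step is to produce a right-inverse $L^{-1}$ with controlled Gevrey-1 norm on sectorial domains of opening slightly larger than $\pi$. This inverse is obtained by integration along the trajectories of $x^{2}\pp x$ in the sectors $S_{\pm}$: these trajectories spiral toward $x=0$, and the sectors are oriented so that the exponentials $\exp\left(\pm\lambda/x\right)$ produced by the transverse eigenvalues remain bounded along the integration paths, which forces the exclusion of the Stokes directions $\arg\left(\pm\lambda\right)$ from the interior of $S_{\pm}$.

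For Step 3, once $\Phi_{\pm}$ are constructed as analytic sectorial diffeomorphisms conjugating $Y$ to a sectorial vector field in normal form, the coefficients of $y_{1}\pp{y_{1}}$ and $y_{2}\pp{y_{2}}$ are of the shape $-\lambda+a_{1}x+c_{1}^{\pm}\left(v\right)$ and $\lambda+a_{2}x+c_{2}^{\pm}\left(v\right)$, where $v=y_{1}y_{2}$ and $c_{j}^{\pm}$ extend as holomorphic functions in $\left(\ww C,0\right)$ uniformly in $x\in S_{\pm}$. Since both pairs agree as formal series at $v=0$ with the unique $c_{j}$ from Theorem \ref{thm: forme normalel formelle}, the $c_{j}$ are convergent, yielding the analyticity claim in (1). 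The identity $c_{1}+c_{2}=0$ follows from orbital linearizability of the restriction being a conjugacy invariant: $\ynorm_{|\acc{x=0}}$ must then be orbitally linearizable, and a direct computation of its foliation $1$-form shows this is equivalent to $c_{1}+c_{2}=0$. For (2), the Ramis-Sibuya theorem applied to the pair $\Phi_{+},\Phi_{-}$ covering a punctured neighborhood of $\acc{x=0}$ gives weak 1-summability of $\hat{\Phi}$, since on each connected component of $S_{+}\cap S_{-}$ the difference lies in the isotropy of $\ynorm$ and is exponentially flat of order 1.

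The main obstacle lies in Step 2: constructing a bounded right-inverse for the homological operator $L$ on sectorial Gevrey-1 function spaces. Unlike in the setting of Stolovitch \cite{Stolo}, where $Y_{|\acc{x=0}}$ is literally linear, here this restriction is only \emph{orbitally} linear, so $L$ carries additional $v$-dependent terms generating further resonances. These must be handled either by a nested inversion along level sets of $v$, or by a careful preparation that isolates the resonant directions before the final fixed-point argument. Uniqueness in (3) then follows from Watson's lemma in sectors of opening larger than $\pi$, combined with uniqueness of $\hat{\Phi}$ from Theorem \ref{thm: forme normalel formelle}.
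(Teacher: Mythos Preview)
This theorem is not proved in the present paper: it is quoted as a result from \cite{bittmann2}, and the paper offers no proof here. What the paper \emph{does} contain is Remark~\ref{rem:New difficulties}, which describes the method actually used in \cite{bittmann2}: a geometric approach \emph{\`a la} Teyssier, in which the homological equation is solved by integrating a well-chosen $1$-form along asymptotic paths, rather than by a fixed-point iteration in a Banach space of sectorial functions.

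Your proposal goes in the opposite direction: you set up the conjugacy equation as $L(\psi)=N(\psi)$ and seek a bounded right-inverse for $L$ on sectorial Gevrey spaces, then iterate. This is precisely the Stolovitch-type approach that the paper explicitly declines to follow, and Remark~\ref{rem:New difficulties} explains why: when $Y_{|\{x=0\}}$ is only \emph{orbitally} linear rather than linear, the nonlinear remainder $N(\psi)$ is no longer divisible by $x$, which destroys the contraction mechanism that makes the fixed-point argument go through in \cite{Stolo}. You correctly identify this obstacle in your final paragraph, but you do not resolve it; the two suggested fixes (``nested inversion along level sets of $v$'' or ``careful preparation'') are too vague to constitute a proof. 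This is the genuine gap.

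A few smaller points. Your derivation of $c_1+c_2=0$ from orbital linearizability of $Y_{\tx{norm}}|_{\{x=0\}}$ is correct in spirit. Your argument for weak $1$-summability via Ramis--Sibuya requires knowing that the transition maps $\Phi_+\circ\Phi_-^{-1}$ are exponentially flat on the overlaps; this is not automatic from the construction and in the present paper is the content of Proposition~\ref{prop: isotropies plates} (for the \emph{strong} Gevrey statement), whose proof occupies all of Section~\ref{sec:Sectorial-isotropies-and}. Finally, uniqueness in part (3) does not require Watson's lemma: as Proposition~\ref{prop: unique normalizations} states, it holds for any sectorial fibered normalization tangent to the identity, without assuming an asymptotic expansion.
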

\begin{defn}
\label{def: sectorial normalizations}We call $\Phi_{+}$ and $\Phi_{-}$
the \textbf{sectorial normalizing maps} of $Y\in\snodiag$. 
\end{defn}
They are the weak 1-sums of $\hat{\Phi}$ along the respective directions
$\arg\left(i\lambda\right)$ and $\arg\left(-i\lambda\right)$. Notice
that $\Phi_{+}$ and $\Phi_{-}$ are \emph{germs of analytic sectorial
fibered diffeomorphisms}, \emph{i.e.} they are of the form
\begin{eqnarray*}
\Phi_{+}:S_{+}\times\left(\ww C^{2},0\right) & \longrightarrow & S_{+}\times\left(\ww C^{2},0\right)\\
\left(x,\mathbf{y}\right) & \longmapsto & \left(x,\Phi_{+,1}\left(x,\mathbf{y}\right),\Phi_{+,2}\left(x,\mathbf{y}\right)\right)
\end{eqnarray*}
and 
\begin{eqnarray*}
\Phi_{-}:S_{-}\times\left(\ww C^{2},0\right) & \longrightarrow & S_{-}\times\left(\ww C^{2},0\right)\\
\left(x,\mathbf{y}\right) & \longmapsto & \left(x,\Phi_{-,1}\left(x,\mathbf{y}\right),\Phi_{-,2}\left(x,\mathbf{y}\right)\right)
\end{eqnarray*}
(see section $2.$ for a precise definition of \emph{germ of analytic
sectorial fibered diffeomorphism}). The fact that they are\emph{ }also
\emph{(asymptotically) tangent to the identity }means that we have:
\[
{\displaystyle \Phi_{\pm}\left(x,\mathbf{y}\right)=\tx{Id}+\mbox{\ensuremath{\tx O}}\left(\norm{\left(x,\mathbf{y}\right)}^{2}\right)}\,\,.
\]

Another result proved in \cite{bittmann2}, is that the uniqueness
of the sectorial normalizing maps holds in fact under weaker assumptions.
\begin{prop}
\label{prop: unique normalizations}Let $\varphi_{+}$ and $\varphi_{-}$
be two germs of sectorial fibered diffeomorphisms in ${\displaystyle S_{+}\times\left(\ww C^{2},0\right)}$
and ${\displaystyle S_{-}\times\left(\ww C^{2},0\right)}$ respectively,
where $S_{+}$ and $S_{-}$ are as in Theorem \ref{Th: Th drsn},
which are (asymptotically) tangent to the identity and such that 
\[
\left(\varphi_{\pm}\right)_{*}\left(Y\right)=\ynorm\,\,.
\]
Then, they necessarily coincide with the weak 1-sums $\Phi_{+}$ and
$\Phi_{-}$ defined above. 
\end{prop}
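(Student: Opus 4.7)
The plan is to show that $\psi_{\pm} := \Phi_{\pm} \circ \varphi_{\pm}^{-1}$ is a sectorial fibered self-conjugacy of $\ynorm$, asymptotically tangent to the identity, and then that any such self-conjugacy must equal $\tx{Id}$. This reduces the proposition to a rigidity statement about the sectorial isotropy of $\ynorm$ on the wide sectors $S_\pm$.

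The first step is formal rigidity. I would apply Theorem \ref{thm: forme normalel formelle} to $\ynorm$ itself: since $\ynorm$ is already in formal normal form, the unique formal fibered diffeomorphism tangent to the identity conjugating $\ynorm$ to its formal normal form is $\tx{Id}$. The asymptotic expansion of $\psi_\pm$ at $x = 0$ is a formal such self-conjugacy, hence equals $\tx{Id}$. Consequently $\psi_\pm - \tx{Id}$ is flat on $S_\pm \times \wcc$: it vanishes faster than every power of $x$ as $x \to 0$, uniformly in $\mathbf{y}$.

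The second step upgrades flatness to exact equality with $\tx{Id}$ by using the flow $\Xi_t$ of $\ynorm$, which commutes with $\psi_\pm$. For any $(x_0, \mathbf{y}_0) \in S_\pm \times \wcc$, one chooses a path $s \mapsto t(s) \in \ww C$ such that $\Xi_{t(s)}(x_0, \mathbf{y}_0)$ remains in $S_\pm \times \wcc$ while $x(s) \to 0$. The fact that $S_\pm$ has opening strictly greater than $\pi$ about $\pm i \lambda$ allows one to choose $t(s)$ so that $\Re(\lambda t(s))$ has a controlled sign, keeping the exponential factors $e^{\pm \lambda t(s)}$ appearing in the $\mathbf{y}$-components of $\Xi_{t(s)}$ manageable. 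Combining flatness of $\psi_\pm - \tx{Id}$ at $\Xi_{t(s)}(x_0, \mathbf{y}_0)$ with the commutation identity
\begin{equation*}
\psi_\pm(x_0, \mathbf{y}_0) = \Xi_{-t(s)} \circ \psi_\pm \circ \Xi_{t(s)}(x_0, \mathbf{y}_0),
\end{equation*}
and passing to the limit $s \to \infty$, one obtains $\psi_\pm(x_0, \mathbf{y}_0) = (x_0, \mathbf{y}_0)$.

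The main obstacle is the precise balance between the super-polynomial decay of $\psi_\pm - \tx{Id}$ (all that flatness provides, absent Gevrey-1 bounds) and the exponential-in-$t$ growth of $\tx D\Xi_{-t(s)}$ in the $\mathbf{y}$-direction. The wide opening of $S_\pm$ is what makes this balance possible: one must exploit that, along the chosen path, at least one of $e^{\lambda t(s)}$ and $e^{-\lambda t(s)}$ can be kept bounded, and one also leverages the fibered structure together with the div-integrability condition $c_1 + c_2 = 0$ to decouple the two $\mathbf{y}$-components. As an alternative, one could replace the direct flow argument by a cohomological analysis showing that any flat infinitesimal fibered symmetry of $\ynorm$ on $S_\pm$ must vanish, which then yields group-level triviality of $\psi_\pm$.
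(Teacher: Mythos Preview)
The paper does not prove this proposition here; it is quoted from \cite{bittmann2}. So there is no ``paper's own proof'' to compare against in this document, and I will assess your sketch on its merits and against the method the present paper uses for the closely related Proposition~\ref{prop: isotropies plates}.

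Your overall strategy---reduce to showing that any sectorial fibered isotropy of $\ynorm$ over the \emph{wide} sector $S_\pm$, tangent to the identity, must be trivial---is correct and is exactly how such uniqueness is obtained. However, your first step has a genuine gap. You claim that the asymptotic expansion of $\psi_\pm$ is a formal self-conjugacy of $\ynorm$ tangent to the identity, hence equals $\tx{Id}$ by Theorem~\ref{thm: forme normalel formelle}, and conclude that $\psi_\pm-\tx{Id}$ is flat. But the hypothesis on $\varphi_\pm$ is only that $\varphi_\pm(x,\mathbf{y})=\tx{Id}+O(\norm{(x,\mathbf{y})}^2)$; nothing guarantees that $\varphi_\pm$ (or $\psi_\pm$) admits \emph{any} asymptotic expansion in $x$. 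Without an expansion there is no formal object to feed into Theorem~\ref{thm: forme normalel formelle}, and flatness of $\psi_\pm-\tx{Id}$ is not available. Your second step then relies on flatness you have not established, and even granting flatness the balance you describe between super-polynomial decay and exponential growth of $\tx D\Xi_{-t}$ is delicate; ``decoupling via div-integrability'' is asserted rather than argued.

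A route that avoids this gap, and that parallels Section~\ref{sec:Sectorial-isotropies-and} of this paper, is to pass directly to the space of leaves. Since $\psi_\pm$ is an isotropy of $\ynorm$, the conjugate $\Psi_\pm:=\cal H_\pm\circ\psi_\pm\circ\cal H_\pm^{-1}$ is independent of $x$ (same argument as Proposition~\ref{prop: isotropie espace des feuilles}). One then studies the Laurent expansion of $\Psi_\pm$ along the curves $\{h_1h_2=w\}$ as in Lemmas~\ref{lem: isotropie monomone res esp des feuilles} and~\ref{lem: istropies espace des feuilles 2}. The crucial difference with the narrow-sector case is that on the wide sector $S_+$ one can let $x\to0$ both along directions where $\abs{f_1(x,w)}\to\infty$ \emph{and} along directions where $\abs{f_1(x,w)}\to0$ (since $S_+$ abuts both $\arg(\lambda)$ and $\arg(-\lambda)$). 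The Cauchy estimates then kill the Laurent coefficients from both sides, leaving only the trivial term; combined with the tangent-to-identity condition this forces $\Psi_\pm=\tx{Id}$. This argument uses only the $O(\norm{(x,\mathbf{y})}^2)$ hypothesis and the geometry of the wide sector, with no appeal to a pre-existing asymptotic expansion.
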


\subsection{Main results}

~

The first main result of this paper is the following.
\begin{thm}
\label{Th: Th drsn-1}Let $Y\in\snodiag$ and let $\hat{\Phi}$ (given
by Theorem \ref{thm: forme normalel formelle}) be the unique formal
fibered diffeomorphism tangent to the identity such that 
\begin{eqnarray*}
\hat{\Phi}_{*}\left(Y\right) & = & x^{2}\pp x+\left(-\lambda+a_{1}x-c\left(y_{1}y_{2}\right)\right)y_{1}\pp{y_{1}}+\left(\lambda+a_{2}x+c\left(y_{1}y_{2}\right)\right)y_{2}\pp{y_{2}}\\
 & =: & \ynorm\,\,,
\end{eqnarray*}
where $\lambda\neq0$ and ${\displaystyle c\left(v\right)\in v\germ v}$.
Then $\hat{\Phi}$ is 1-summable (with respect to $x$) in every direction
$\theta\neq\arg\left(\pm\lambda\right)$, and $\Phi_{+},\Phi_{-}$
in Theorem \ref{Th: Th drsn} are the 1-sums of $\hat{\Phi}$ in directions
$\arg\left(i\lambda\right),\arg\left(-i\lambda\right)$ respectively.
\end{thm}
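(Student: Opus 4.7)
The plan is to upgrade the weak Gevrey-$1$ asymptotic expansion provided by Theorem \ref{Th: Th drsn} to a genuine Gevrey-$1$ one, and then to apply the Ramis--Sibuya theorem. Since $\Phi_{+}$ and $\Phi_{-}$ are already known to be holomorphic on wide sectors $S_{+}$, $S_{-}$ of opening strictly greater than $\pi$ whose union covers a full punctured neighborhood of $\acc{x=0}$, it suffices to show that on each connected component of $S_{+}\cap S_{-}$ (a narrow sector bisected by one of the Stokes directions $\arg(\pm\lambda)$) the difference $\Phi_{+}-\Phi_{-}$ decays exponentially of order $1$: there exist $C,K>0$ with $\norm{\Phi_{+}(x,\mathbf{y})-\Phi_{-}(x,\mathbf{y})}\leq C\exp(-K/\abs{x})$ uniformly on a polydisc in $\mathbf{y}$.

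To obtain this exponential decay, I would study the Stokes diffeomorphism $\Psi:=\Phi_{-}^{-1}\circ\Phi_{+}$ on each narrow intersection sector. Because $\Phi_{\pm}$ both conjugate $Y$ to $\ynorm$, the map $\Psi$ is a sectorial fibered self-conjugacy of $\ynorm$; because $\Phi_{+}$ and $\Phi_{-}$ share the common formal expansion $\hat{\Phi}$, the map $\Psi$ is asymptotic to the identity, so that $\Psi-\tx{Id}$ is flat. The explicit form of $\ynorm$ together with the div-integrability assumption provide explicit sectorial first integrals of $\ynorm$ of the shape $v=y_{1}y_{2}$ (up to the factor $x^{-(a_{1}+a_{2})}$) and two exponentially flat transverse integrals built from $y_{1}x^{-a_{1}}e^{-\lambda/x}$ and $y_{2}x^{-a_{2}}e^{\lambda/x}$. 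On each narrow intersection sector exactly one of $e^{\pm\lambda/x}$ is exponentially small of order $1$, which forces any formally-identity sectorial self-conjugacy of $\ynorm$ to factor through that exponentially small coordinate, and hence to be exponentially flat of order $1$. Since $\Phi_{+}=\Phi_{-}\circ\Psi$ with $\Phi_{-}$ a sectorial diffeomorphism tangent to the identity, the exponential flatness of $\Psi-\tx{Id}$ transfers directly to $\Phi_{+}-\Phi_{-}$.

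Once this exponential flatness is established, the classical Ramis--Sibuya theorem, applied componentwise to $\Phi_{\pm}$ viewed as holomorphic maps from $S_{\pm}$ into a Banach algebra of holomorphic functions on a fixed polydisc in $\mathbf{y}$, upgrades the weak Gevrey-$1$ estimates to genuine Gevrey-$1$ estimates for the remainders of $\hat{\Phi}$, and identifies $\Phi_{+},\Phi_{-}$ as the $1$-sums of $\hat{\Phi}$ in the directions $\arg(i\lambda)$ and $\arg(-i\lambda)$. Since $S_{+}\cup S_{-}$ covers every direction $\theta\neq\arg(\pm\lambda)$, this yields $1$-summability in every such direction, the remaining sums being obtained by restricting $\Phi_{+}$ or $\Phi_{-}$ to a sub-sector and invoking uniqueness of $1$-sums. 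The main technical obstacle is the exponential estimate of the second step: proving that any sectorial self-conjugacy of $\ynorm$ tangent to the identity on a narrow Stokes sector is exponentially flat of order $1$. This requires an explicit description of the space of leaves of $\ynorm$ on the intersection sectors and a careful analysis of how a tangent-to-identity sectorial automorphism can act on that space, and it is precisely here that the div-integrability hypothesis ($c_{1}+c_{2}=0$) plays a decisive role by making the relevant first integrals writable in closed form.
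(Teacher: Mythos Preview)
Your plan is essentially the paper's own strategy. Both reduce the problem to showing that the sectorial isotropies of $\ynorm$ arising as Stokes diffeomorphisms on the narrow sectors $S_{\pm\lambda}$ are exponentially flat of order~$1$ (this is the paper's Proposition~\ref{prop: isotropies plates}, proved in Section~\ref{sec:Sectorial-isotropies-and} via a detailed study of the space of leaves and the first integrals~(\ref{eq: integrales premieres})); you correctly identify this as the main technical obstacle and sketch the same mechanism.

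The only real difference is in the concluding step. Once exponential flatness of the Stokes cocycle is established, the paper invokes the \emph{non-abelian} Martinet--Ramis factorization theorem (Theorem~\ref{th: martinet ramis}) to produce a new pair $(\phi_{+},\phi_{-})$ with a common Gevrey-$1$ expansion $\hat{\phi}$, and then argues that $\Phi_{\pm}^{-1}\circ\phi_{\pm}$ glue to an analytic germ $\Phi$, so that $\Phi_{\pm}$ has Gevrey-$1$ expansion $\hat{\phi}\circ\Phi^{-1}=\hat{\Phi}$. You instead transfer the exponential flatness of $\Psi-\tx{Id}$ directly to $\Phi_{+}-\Phi_{-}$ and apply the classical (additive) Ramis--Sibuya theorem componentwise in a Banach algebra of holomorphic functions of~$\mathbf{y}$. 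Your route is more economical: since the sectorial maps $\Phi_{\pm}$ are already in hand, the question of whether they admit a Gevrey-$1$ expansion is a linear one, and the detour through the cocycle factorization is unnecessary. The paper's route, on the other hand, packages the argument in a way that is directly reusable for the surjectivity part of Theorem~\ref{thm: espace de module}, where one must \emph{construct} $\Phi_{\pm}$ from a prescribed cocycle.
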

Since two analytically conjugate vector fields are also formally conjugate,
we fix now a normal form 
\[
\ynorm=x^{2}\pp x+\left(-\lambda+a_{1}x-c\left(v\right)\right)y_{1}\pp{y_{1}}+\left(\lambda+a_{2}x+c\left(v\right)\right)y_{2}\pp{y_{2}}\,\,\,\,,
\]
with $\lambda\in\ww C^{*}$, $\Re\left(a_{1}+a_{2}\right)>0$ and
$c\in v\germ v$ vanishing at the origin.
\begin{defn}
\label{def: ynorm class}We denote by ${\displaystyle \cro{\ynorm}}$
the set of germs of holomorphic doubly-resonant saddle-nodes in $\left(\ww C^{3},0\right)$
which are formally conjugate to $\ynorm$ by formal fibered diffeomorphisms
tangent to the identity, and denote by ${\displaystyle \quotient{\cro{\ynorm}}{\fdiff[\ww C^{3},0,\tx{Id}]}}$
the set of orbits of the elements in this set under the action of
${\displaystyle \fdiff[\ww C^{3},0,\tx{Id}]}$.
\end{defn}
According to Theorem \ref{Th: Th drsn}, to any ${\displaystyle Y\in{\displaystyle \cro{\ynorm}}}$
we can associate two sectorial normalizing maps $\Phi_{+},\Phi_{-}$,
which can in fact extend analytically in domains $S_{+}\times\left(\ww C^{2},0\right)$
and $S_{-}\times\left(\ww C^{2},0\right)$, where $S_{\pm}$ is an
asymptotic sector in the direction $\arg\left(\pm i\lambda\right)$
with opening $2\pi$ (see Definition \ref{def: asymptotitc sector}):
\[
\left(S_{+},S_{-}\right)\in\asympsect{\arg\left(i\lambda\right)}{2\pi}\times\asympsect{\arg\left(-i\lambda\right)}{2\pi}\,\,.
\]
Then, we consider two germs of sectorial fibered diffeomorphisms $\Phi_{\lambda},\Phi_{-\lambda}$
analytic in $S_{\lambda},S_{-\lambda}$, with 
\begin{eqnarray}
S_{\lambda} & := & S_{+}\cap S_{-}\cap\acc{\Re\left(\frac{x}{\lambda}\right)>0}\in\asympsect{\arg\left(\lambda\right)}{\pi}\label{eq: petits secteurs}\\
S_{-\lambda} & := & S_{+}\cap S_{-}\cap\acc{\Re\left(\frac{x}{\lambda}\right)<0}\in\asympsect{\arg\left(-\lambda\right)}{\pi}\,\,,\nonumber 
\end{eqnarray}
defined by: 
\[
\begin{cases}
\Phi_{\lambda}:=\left(\Phi_{+}\circ\Phi_{-}^{-1}\right)_{\mid S_{\lambda}\times\left(\ww C^{2},0\right)}\in\diffsect[\arg\left(\lambda\right)][\epsilon] & ,\,\forall\epsilon\in\left[0,\pi\right[\\
\Phi_{-\lambda}:=\left(\Phi_{-}\circ\Phi_{+}^{-1}\right)_{\mid S_{-\lambda}\times\left(\ww C^{2},0\right)}\diffsect[\arg\left(-\lambda\right)][\epsilon] & ,\,\forall\epsilon\in\left[0,\pi\right[\,\,.
\end{cases}
\]
Notice that $\Phi_{\lambda},\Phi_{-\lambda}$ are \emph{isotropies}
of $\ynorm$, \emph{i.e. }they satisfy:
\begin{eqnarray*}
\left(\Phi_{\pm\lambda}\right)_{*}\left(\ynorm\right) & = & \ynorm\,\,.
\end{eqnarray*}

\begin{defn}
\label{def: Stokes diffeo}With the above notations, we define ${\displaystyle \Lambda_{\lambda}\left(\ynorm\right)}$
${\displaystyle \left(\mbox{\emph{resp.} }\Lambda_{-\lambda}\left(\ynorm\right)\right)}$
as the group of germs of sectorial fibered isotropies of $\ynorm$,
tangent to the identity, and admitting the identity as Gevrey-1 asymptotic
expansion (see Definition \ref{def:Gevrey-1_expansion}) in sectorial
domains of the form ${\displaystyle S_{\lambda}\times\left(\ww C^{2},0\right)}$
${\displaystyle \left(resp.\, S_{-\lambda}\times\left(\ww C^{2},0\right)\right)}$,
with $S_{\pm\lambda}\in\asympsect{\arg\left(\pm\lambda\right)}{\pi}$. 

The two sectorial isotropies $\Phi_{\lambda}$ and $\Phi_{-\lambda}$
defined above are called the \textbf{Stokes diffeomorphisms} associate
to ${\displaystyle Y\in{\displaystyle \cro{\ynorm}}}$.
\end{defn}
Our second main result gives the moduli space for the analytic classification
that we are looking for.
\begin{thm}
\label{thm: espace de module}The map
\begin{eqnarray*}
\quotient{\cro{\ynorm}}{\fdiff[\ww C^{3},0,\tx{Id}]} & \longrightarrow & \Lambda_{\lambda}\left(\ynorm\right)\times\Lambda_{-\lambda}\left(\ynorm\right)\\
Y & \longmapsto & \left(\Phi_{\lambda},\Phi_{-\lambda}\right)\,\,
\end{eqnarray*}
is well-defined and bijective.
\end{thm}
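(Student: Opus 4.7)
The plan is to prove the theorem in three steps: well-definedness (including descent to the quotient), injectivity via a sectorial gluing argument, and surjectivity via a non-linear Riemann-Hilbert-Birkhoff decomposition.

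For \textbf{well-definedness}, since $\Phi_+$ and $\Phi_-$ both conjugate $Y$ to $\ynorm$ on their respective domains, the compositions $\Phi_\pm\circ\Phi_\mp^{-1}$ are automatically fibered sectorial isotropies of $\ynorm$ on the overlap sectors $S_{\pm\lambda}$. By Theorem~\ref{Th: Th drsn-1}, both $\Phi_+$ and $\Phi_-$ are Gevrey-1 asymptotic to the same formal normalizing map $\widehat{\Phi}$, so these compositions admit the identity as Gevrey-1 asymptotic expansion and therefore lie in $\Lambda_{\pm\lambda}(\ynorm)$. For descent to the quotient, given any $\Psi\in\fdiff[\ww C^{3},0,\tx{Id}]$, one checks that $\Phi_\pm\circ\Psi^{-1}$ satisfy all the characterizing properties of the sectorial normalizing maps of $\Psi_*Y$; by the uniqueness Proposition~\ref{prop: unique normalizations} they are in fact those normalizing maps, so the associated Stokes pair is unchanged.

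For \textbf{injectivity}, suppose $Y,Y'\in[\ynorm]$ give rise to the same Stokes pair, with respective sectorial normalizing maps $\Phi_\pm$ and $\Phi'_\pm$. The composition $\Psi_\pm:=(\Phi'_\pm)^{-1}\circ\Phi_\pm$ conjugates $Y$ to $Y'$ on $S_\pm\times(\ww C^2,0)$. Using $\Phi_+=\Phi_\lambda\circ\Phi_-$ on $S_\lambda$ together with $\Phi'_+=\Phi_\lambda\circ\Phi'_-$ (the Stokes factors coincide by hypothesis), one computes
\[
\Psi_+=(\Phi'_-)^{-1}\circ\Phi_\lambda^{-1}\circ\Phi_\lambda\circ\Phi_-=\Psi_-,
\]
and similarly on $S_{-\lambda}$. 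Thus $\Psi_+,\Psi_-$ glue into a single holomorphic fibered map $\Psi$ defined on $(S_+\cup S_-)\times(\ww C^2,0)$, a punctured neighborhood of $\{x=0\}$. Being tangent to the identity, $\Psi$ is bounded and extends across $\{x=0\}$ by Riemann's removable singularity theorem to an element of $\fdiff[\ww C^{3},0,\tx{Id}]$ carrying $Y$ to $Y'$.

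For \textbf{surjectivity}, given $(\psi_\lambda,\psi_{-\lambda})\in\Lambda_\lambda(\ynorm)\times\Lambda_{-\lambda}(\ynorm)$, the task is to construct sectorial fibered diffeomorphisms $\Phi_\pm$ on $S_\pm\times(\ww C^2,0)$, tangent to the identity and sharing a common Gevrey-1 asymptotic series, which solve the non-linear cocycle equation $\Phi_+\circ\Phi_-^{-1}=\psi_\lambda$ on $S_\lambda$ and $\Phi_-\circ\Phi_+^{-1}=\psi_{-\lambda}$ on $S_{-\lambda}$. Once this decomposition is achieved, the pushforwards $(\Phi_\pm^{-1})_*\ynorm$ coincide on each overlap (because $\psi_{\pm\lambda}$ are isotropies of $\ynorm$), define a holomorphic vector field on the punctured neighborhood of $\{x=0\}$, and extend through $\{x=0\}$ by Riemann to a vector field $Y$; the common formal expansion of $\Phi_\pm$ then formally conjugates $Y$ to $\ynorm$, placing $Y$ in $[\ynorm]$. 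To solve the cocycle I would pass to the sectorial \emph{spaces of leaves} of $\ynorm$: integrating $\ynorm$ on each wide sector $S_\pm$ supplies transverse first integrals that trivialize the foliation, and $\psi_{\pm\lambda}$ translate into transition automorphisms of these leaf spaces over the overlaps $S_{\pm\lambda}$. Patching yields an abstract leaf space whose local trivializations, pulled back to $\ynorm$-coordinates, produce $\Phi_\pm$. The hard part, and the most delicate technical point, is to verify that the resulting $\Phi_+$ and $\Phi_-$ admit a \emph{common} Gevrey-1 asymptotic series across both Stokes directions; this is what guarantees that $Y$ is analytic at the origin rather than merely sectorially defined, and it is ultimately enabled by the hypothesis that $\psi_{\pm\lambda}$ are identity-asymptotic at the Gevrey-1 rate, via a fixed-point argument in appropriate Banach algebras of Gevrey-1 sectorial germs.
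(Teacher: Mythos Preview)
Your arguments for well-definedness, descent to the quotient, and injectivity are correct and essentially match the paper's (the paper routes well-definedness through a ``weak Gevrey-1'' version first and then invokes Proposition~\ref{prop: isotropies plates}, whereas you go directly through Theorem~\ref{Th: Th drsn-1}; these are logically equivalent since the latter is itself a consequence of the former).

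Your surjectivity argument, however, has a genuine gap. You correctly identify the crux: one must factor the given cocycle $(\psi_\lambda,\psi_{-\lambda})$ as $\Phi_+\circ\Phi_-^{-1}$ with $\Phi_\pm$ sharing a common Gevrey-1 asymptotic expansion. But your proposed route---passing to the space of leaves, patching an abstract leaf bundle, then recovering $\Phi_\pm$ and checking the Gevrey-1 property by ``a fixed-point argument in appropriate Banach algebras''---is not carried out, and it is far from clear that a leaf-space patching would produce maps tangent to the identity with a \emph{common} formal expansion. The paper does \emph{not} use the space of leaves for this step at all (that machinery is reserved for proving Proposition~\ref{prop: isotropies plates}). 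Instead, it invokes directly Theorem~\ref{th: martinet ramis}, the non-abelian Ramis--Sibuya/Martinet--Ramis isomorphism theorem, which takes as input an arbitrary pair of sectorial fibered diffeomorphisms Gevrey-1 asymptotic to the identity over the two narrow sectors and outputs the desired factorization $(\phi_+,\phi_-)$ together with a common Gevrey-1 formal expansion $\hat\phi$. No structure of $\ynorm$ is needed for this decomposition; the isotropy hypothesis is only used afterwards to glue $(\phi_\pm^{-1})_*\ynorm$ into an analytic $Y$. You should replace your leaf-space sketch by a direct appeal to this theorem.
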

In particular, the result states that Stokes diffeomorphisms only
depend on the class of $Y\in\cro{\ynorm}$ in the quotient ${\displaystyle \quotient{\cro{\ynorm}}{\fdiff[\ww C^{3},0,\tx{Id}]}}$.
We will give a description of this set of invariants in terms of power
series in the \emph{space of leaves} in section \ref{sec:Sectorial-isotropies-and}.
\begin{rem}
\label{rem:New difficulties}In \cite{bittmann2} we have proved a
theorem of sectorial normalizing map analogous to the classical one
due to Hukuhara-Kimura-Matuda for saddle-nodes in $\left(\ww C^{2},0\right)$
\cite{HKM}, generalized later by Stolovitch in any dimension in \cite{Stolo}.
Unlike the method based on a fixed point theorem used by these authors,
we have used a more geometric approach (following the works of Teyssier~\cite{Teyssier03,teyssier2004equation})
based on the resolution of an homological equation, by integrating
a well chosen 1-form along asymptotic paths. This latter approach
turned out to be more efficient to deal with the fact that $Y_{\mid\acc{x=0}}$
is not necessarily linearizable. Indeed, if we try to adapt the proof
of \cite{Stolo}, one of the first new main difficulties is that in
the irregular systems that needs to be solved by a fixed point method
(for instance equation $\left(2.7\right)$ in the cited paper), the
non-linear terms would not be divisible by the independent variable
(\emph{i.e.} the time) in our situation. This would complicate the
different estimates that are done later in the cited work. This was
the first main new phenomena we have met. 

In contrast to a result of \cite{bittmann2} which states that the
only sectorial isotropy (tangent to the identity) of the normal form
over wide sectors (of opening $>\pi$) is the identity, we will see
here that the situation is rather different over sector with narrow
opening. In order to prove both Theorems \ref{Th: Th drsn-1} and
\ref{thm: espace de module}, we will show that the Stokes diffeomorphisms
$\Phi_{\lambda}$ and $\Phi_{-\lambda}$ obtained from the germs of
sectorial normalizing maps $\Phi_{+}$ and $\Phi_{-}$,admit the identity
as Gevrey-1 asymptotic expansion. In the cited reference we were only
able to establish that fact with a weaker notion of Gevrey-1 expansion.
The main difficulty is to prove that such a sectorial isotropy of
$\ynorm$ over the ``narrow'' sectors described above is necessarily
exponentially close to the identity (see Proposition \ref{prop: isotropies plates}).
This will be done \emph{via} a detailed analysis of these maps in
the space of leaves (see Definition \ref{def: space of leaves}).
In fact, this is the second main new difficulties we have met, which
is due to the presence of the ``resonant'' term 
\[
\frac{c_{m}\left(y_{1}y_{2}\right)^{m}\log\left(x\right)}{x}
\]
in the exponential expression of the first integrals of the vector
field (see $\left(\mbox{\ref{eq: integrales premieres}}\right)$).
In \cite{Stolo}, similar computations are done in subsection $3.4.1$.
In this part of the paper, infinitely many irregular differential
equations appear when identifying terms of same homogeneous degree.
The fact that $Y_{\mid\acc{x=0}}$ is linear implies that these differential
equations are all linear and independent of each others (\emph{i.e.
}they are not mixed together). In our situation, this is not the case
and then more complicated. Using a ``non-abelian'' version of the
Ramis-Sibuya theorem due to Martinet and Ramis \cite{MR82}, we prove
both sectorial normalizing maps $\Phi_{+}$ and $\Phi_{-}$ admit
the formal normalizing map $\hat{\Phi}$ as Gevrey-$1$ asymptotic
expansion in the corresponding sectorial domains. This establishes
the Gevrey-1 summability of $\hat{\Phi}$.
\end{rem}

\subsection{\label{sub: transversally symplectic}The transversally Hamiltonian
case }

~

In order to motivate the following definition, we refer to \cite{bittmann2}
where the study of the Painlevé case is performed.
\begin{defn}
\label{def: intro}Consider the rational 1-form 
\begin{eqnarray*}
\omega & := & \frac{\mbox{d}y_{1}\wedge\mbox{d}y_{2}}{x}~.
\end{eqnarray*}
We say that vector field $Y$ is \textbf{transversally Hamiltonian
}(with respect to $\omega$ and $\mbox{dx}$) if 
\begin{eqnarray*}
\mathcal{L}_{Y}\left(\tx dx\right)\in\left\langle \tx dx\right\rangle  & \mbox{ and } & \mathcal{L}_{Y}\left(\omega\right)\in\left\langle \tx dx\right\rangle \qquad.
\end{eqnarray*}
For any open sector $S\subset\ww C^{*}$, we say that a germ of sectorial
fibered diffeomorphism $\Phi$ in $S\times\left(\ww C^{2},0\right)$
is \textbf{transversally symplectic} (with respect to $\omega$ and
$\mbox{d}x$) if
\[
\Phi^{*}\left(\omega\right)\in\,\omega+\ps{\tx dx}\qquad
\]
 (here $\Phi^{*}\left(\omega\right)$ denotes the pull-back of $\omega$
by $\Phi$). 

We denote by $\sdiffid$ the group of transversally symplectic diffeomorphisms
which are tangent to the identity.\end{defn}
\begin{rem}
A fibered sectorial diffeomorphism $\Phi$ is transversally symplectic
if and only if $\det\left(\mbox{D}\Phi\right)=1$.\end{rem}
\begin{defn}
\label{def: nc dr th}A \textbf{transversally Hamiltonian doubly-resonant
saddle-node} is a transversally Hamiltonian vector field which is
conjugate, \emph{via }a (fibered) transversally symplectic diffeomorphism,
to one of the form 
\begin{eqnarray*}
Y & = & x^{2}\pp x+\Big(-\lambda y_{1}+F_{1}\left(x,\mathbf{y}\right)\Big)\pp{y_{1}}+\Big(\lambda y_{2}+F_{2}\left(x,\mathbf{y}\right)\Big)\pp{y_{2}}\,\,\,\,\,,
\end{eqnarray*}
with $\lambda\in\ww C^{*}$ and $F_{1},F_{2}$ analytic in $\left(\ww C^{3},0\right)$
and of order at least $2$.
\end{defn}
Notice that a transversally Hamiltonian doubly-resonant saddle-node
is necessarily strictly non-degenerate (since its residue is always
equal to $1$), and also div-integrable (see section 3).

We recall the second main result from \cite{bittmann1}. 
\begin{thm}
\cite{bittmann1}\label{thm: Th ham formel}~

Let $Y$ be a diagonal doubly-resonant saddle-node which is transversally
Hamiltonian. Then, there exists a unique formal fibered transversally
symplectic diffeomorphism $\hat{\Phi}$ tangent to identity such that:
\begin{eqnarray}
\hat{\Phi}_{*}\left(Y\right) & = & x^{2}\pp x+\left(-\lambda+a_{1}x-c\left(y_{1}y_{2}\right)\right)y_{1}\pp{y_{1}}+\left(\lambda+a_{2}x+c\left(y_{1}y_{2}\right)\right)y_{2}\pp{y_{2}}\nonumber \\
 & =: & \ynorm\,\,,\label{eq: fibered normal form-1-1-1}
\end{eqnarray}
where $\lambda\in\ww C^{*}$, $c\left(v\right)\in v\form v$ a formal
power series in $v=y_{1}y_{2}$ without constant term and $a_{1},a_{2}\in\ww C$
are such that $a_{1}+a_{2}=1$.
\end{thm}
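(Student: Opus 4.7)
The plan is to upgrade Theorem \ref{thm: forme normalel formelle} in the presence of the transversally Hamiltonian structure, exploiting the constraint $\mathcal{L}_Y\omega \in \ps{\tx dx}$ to force both $c_1 + c_2 = 0$ and the transversal symplecticity of the normalizer that has already been constructed in the fibered setting.

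First, I would observe that the transversally Hamiltonian condition directly forces $\res Y = 1$. Writing $Y = x^2\pp x + (-\lambda y_1 + F_1)\pp{y_1} + (\lambda y_2 + F_2)\pp{y_2}$, a Cartan-formula computation gives that the $\tx dy_1 \wedge \tx dy_2$-coefficient of $\mathcal{L}_Y\omega$ equals $-1 + x^{-1}\bigl(\ppp{F_1}{y_1} + \ppp{F_2}{y_2}\bigr)$, which must vanish. Hence $\ppp{F_1}{y_1} + \ppp{F_2}{y_2} = x$, and extracting the coefficient of $x$ yields $\res Y = F_{1,(1,1,0)} + F_{2,(1,0,1)} = 1$. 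In particular $Y$ is non-degenerate, so Theorem \ref{thm: forme normalel formelle} furnishes the unique formal fibered diffeomorphism $\hat\Phi$ tangent to the identity conjugating $Y$ to $Z := x^2\pp x + \bigl(-\lambda + a_1x + c_1(v)\bigr) y_1\pp{y_1} + \bigl(\lambda + a_2x + c_2(v)\bigr) y_2\pp{y_2}$, with $a_1 + a_2 = 1$ and $c_1, c_2 \in v\form v$.

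Next I would show that $\hat\Phi$ is automatically transversally symplectic and that $c_1 + c_2 = 0$. Let $\hat\omega := \hat\Phi_*\omega$. Since $\hat\Phi$ is fibered, $\hat\Phi_*\ps{\tx dx} = \ps{\tx dx}$, so naturality of the Lie derivative gives $\mathcal{L}_Z\hat\omega = \hat\Phi_*\mathcal{L}_Y\omega \in \ps{\tx dx}$. Writing $\hat\Phi^{-1} = (x, \Psi_1, \Psi_2)$ and $J := \det\bigl(\partial\Psi/\partial \mathbf{y}\bigr)$, with $J - 1 \in \maxid$ because $\hat\Phi^{-1}$ is tangent to the identity, one has $\hat\omega = x^{-1} J\, \tx dy_1 \wedge \tx dy_2 + \tx dx \wedge \gamma$ for some formal 1-form $\gamma$. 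The vanishing of the $\tx dy_1 \wedge \tx dy_2$-coefficient of $\mathcal{L}_Z \hat\omega$, combined with $a_1 + a_2 = 1$, yields the cohomological equation
\[
Z(J) + J\, C'(v) = 0, \qquad C(v) := v\bigl(c_1(v) + c_2(v)\bigr).
\]
Solving this by induction on the total $\mathbf{y}$-degree $n$: the part $A$ of $Z$ preserving $\mathbf{y}$-degree acts diagonally on monomials $y_1^i y_2^j$, and for non-resonant $(i \ne j)$ the corresponding coefficient of $J$ in $\form x$ is uniquely determined and, by induction on $n$, vanishes. For resonant monomials $v^k = y_1^k y_2^k$ one obtains a scalar ODE $\bigl(x^k p_{kk}\bigr)' = -(k+1)\,b_k\, x^{k-2}$, where $b_k$ is the $v^k$-coefficient of $c_1 + c_2$; integrating this produces a logarithm (for $k=1$) or a pole in $x$ (for $k\ge 2$), so the only formal-power-series solution requires $b_k = 0$ and then $p_{kk}=0$. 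Propagating through $k$ yields $c_1 + c_2 = 0$ and $J = 1$. Consequently $\det\DD{\hat\Phi} = 1$, so $\hat\Phi$ is transversally symplectic by the remark following Definition \ref{def: intro}; and setting $c := c_2 = -c_1 \in v\form v$ puts $Z$ in the prescribed normal form.

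Finally, uniqueness within the transversally symplectic class is immediate: any formal fibered transversally symplectic diffeomorphism tangent to the identity conjugating $Y$ to a normal form of the prescribed shape is, \emph{a fortiori}, a formal fibered diffeomorphism doing so, and hence coincides with $\hat\Phi$ by the uniqueness in Theorem \ref{thm: forme normalel formelle}. The chief obstacle is the second paragraph, specifically the analysis at resonant monomials $v^k$: it is precisely the appearance of logarithmic or polar terms in the scalar ODEs that identifies $c_1 + c_2$ as the invariant forced to vanish and then pins down $J = 1$. An alternative and perhaps more geometric route would be to construct $\hat\Phi$ directly as a formal infinite product of time-one flows of fibered transversally Hamiltonian vector fields (the tangent Lie algebra of $\sdiffid$), manifestly preserving the transversally symplectic structure throughout the inductive normalization; the technical crux then becomes the solvability of the scalar homological equation for the Hamiltonian generators at each step.
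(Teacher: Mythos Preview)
The present paper does not prove this theorem: it is quoted verbatim from \cite{bittmann1} (see the sentence ``We recall the second main result from \cite{bittmann1}'' immediately preceding the statement), so there is no proof here to compare against. That said, your argument is sound and self-contained.

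A couple of remarks on the details. Your computation of the $\tx dy_1\wedge\tx dy_2$-coefficient of $\mathcal L_Y\omega$ and the resulting constraint $\partial_{y_1}F_1+\partial_{y_2}F_2=x$ (hence $\res Y=1$) is correct; the paper records this fact just after Definition~\ref{def: nc dr th}. Your key step, reducing $\mathcal L_Z\hat\omega\in\ps{\tx dx}$ to the cohomological equation $Z(J)+J\,C'(v)=0$ with $C(v)=v\bigl(c_1(v)+c_2(v)\bigr)$, is also correct: writing $Z=x^2\partial_x+A+B$ with $A$ the degree-preserving part, the induction on $\mathbf y$-degree goes through because $B$ and $C'(v)$ both raise degree by at least $2$, so at degree $n$ the right-hand side only sees $J_0=1$ and the (as yet unconstrained) coefficient $b_{n/2}$ of $c_1+c_2$. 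For the non-resonant monomials $y_1^iy_2^j$ with $i\ne j$ the scalar ODE has leading term $(i-j)\lambda\,p_{ij}$, forcing $p_{ij}=0$ termwise; for the resonant monomial $v^k$ one gets exactly $(x^kp_{kk})'=-(k+1)b_k\,x^{k-2}$, whose only formal solution is $b_k=0$, $p_{kk}=0$. Hence $c_1+c_2=0$ and $J=1$, i.e.\ $\det\tx D\hat\Phi=1$, so $\hat\Phi$ is transversally symplectic by the remark after Definition~\ref{def: intro}. Uniqueness then follows immediately from Theorem~\ref{thm: forme normalel formelle} as you say.

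The alternative route you sketch at the end---building $\hat\Phi$ as an infinite composition of time-one flows of fibered transversally Hamiltonian fields---is closer in spirit to how such results are often proved in \cite{bittmann1}, and has the advantage of never leaving the symplectic category; your approach instead leverages the already-established Theorem~\ref{thm: forme normalel formelle} and extracts the extra structure \emph{a posteriori}, which is shorter once that theorem is in hand.
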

The second main result in \cite{bittmann2} is the following.
\begin{thm}
\label{thm: Th ham}Let $Y$ be a transversally Hamiltonian doubly-resonant
saddle-node. Let $\hat{\Phi}$ be the unique formal normalizing map
given by Theorem \ref{thm: Th ham formel}. Then the associate sectorial
normalizing maps $\Phi_{+}$ and $\Phi_{-}$ given by Theorem \ref{Th: Th drsn}
are also transversally symplectic.
\end{thm}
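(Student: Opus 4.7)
By the Remark following Definition \ref{def: intro}, a fibered sectorial diffeomorphism is transversally symplectic if and only if its Jacobian determinant equals $1$, so the task reduces to showing that $u:=\det(\mathrm{D}\Phi_+)\equiv 1$ on $S_+\times(\mathbb{C}^2,0)$ (and analogously for $\Phi_-$). The first step is to check that $u$ is a first integral of $Y$. The transversally Hamiltonian condition forces $\mathrm{div}(Y)=\mathrm{div}(\ynorm)=3x$ with respect to $\mathrm{vol}:=\mathrm{d}x\wedge\mathrm{d}y_1\wedge\mathrm{d}y_2$, as follows from $\mathrm{d}x\wedge\omega=\mathrm{vol}/x$, $\mathcal{L}_Z\mathrm{d}x=2x\,\mathrm{d}x$ and $\mathcal{L}_Z\omega\in\langle\mathrm{d}x\rangle$ for $Z\in\{Y,\ynorm\}$. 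Since $\Phi_+$ is fibered, $\Phi_+^*\,\mathrm{vol}=u\,\mathrm{vol}$; equating $\mathcal{L}_Y(u\,\mathrm{vol})=(Y(u)+3xu)\,\mathrm{vol}$ with $\Phi_+^*(\mathcal{L}_{\ynorm}\,\mathrm{vol})=3xu\,\mathrm{vol}$ yields $Y(u)=0$.

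Next I would verify that $u$ is Gevrey-$1$ asymptotic to $1$ on $S_+$. By Theorem \ref{Th: Th drsn-1}, $\Phi_+$ is the $1$-sum of $\hat\Phi$ on $S_+$, and since Gevrey-$1$ summation commutes with partial derivatives and polynomial operations, $u$ is the $1$-sum of $\det(\mathrm{D}\hat\Phi)$. By Theorem \ref{thm: Th ham formel}, $\hat\Phi$ is formally transversally symplectic, hence $\det(\mathrm{D}\hat\Phi)=1$, and thus $w:=u-1$ is a Gevrey-flat first integral of $Y$ on $S_+\times(\mathbb{C}^2,0)$. The theorem will then follow from the rigidity statement that any such flat first integral vanishes identically.

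The main obstacle is this last rigidity statement. The plan is to transport the problem to the normal form: $\tilde w:=w\circ\Phi_+^{-1}$ is a Gevrey-flat first integral of $\ynorm$, which I expand as $\tilde w=\sum_{p,q\geq 0}\tilde w_{p,q}(x)\,y_1^p y_2^q$ with each $\tilde w_{p,q}$ Gevrey-flat on $S_+$. The condition $\ynorm(\tilde w)=0$ decouples along the diagonals $k:=q-p\in\mathbb{Z}$ into triangular linear systems of ODEs
\[
x^2\partial_x\tilde w_{p,p+k}+\bigl(k\lambda+(p+ka_2)x\bigr)\tilde w_{p,p+k}=-k\sum_{m\geq 1}c_m\,\tilde w_{p-m,p+k-m}
\]
(using $a_1+a_2=1$), where the source involves only coefficients with smaller index $p$. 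For $k=0$ this is a regular equation with general solution $C_p\,x^{-p}$, which is Gevrey-flat on $S_+$ only if $C_p=0$. For $k\neq 0$ I proceed by induction on $p$: the inductive hypothesis kills the source, leaving a homogeneous irregular ODE whose one-dimensional solution space is spanned by $\exp(k\lambda/x)\,x^{-(p+ka_2)}$; a direct examination of $\Re(k\lambda/x)$ on the wide sector $S_+$ (of opening strictly greater than $\pi$ around the direction $\arg(i\lambda)$) shows this exponential is unbounded on some proper sub-sector, so no nonzero multiple can be asymptotically flat. Every Taylor coefficient of $\tilde w$ therefore vanishes, hence $\tilde w\equiv 0$, $u\equiv 1$, and $\Phi_+$ is transversally symplectic. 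The identical argument for $\Phi_-$ on $S_-$ concludes the proof.
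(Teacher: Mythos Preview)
The paper does not actually prove this theorem here: it is quoted as ``the second main result in \cite{bittmann2}'' and is simply recalled for later use (in the proof of Theorem~\ref{th: espace de module symplectic}). So there is no in-paper proof to compare against, and your argument should be read as an independent proof within the logical framework of the present paper.

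Your argument is correct, but it is substantially more elaborate than necessary. The key observation is already contained in your second paragraph: by Theorem~\ref{Th: Th drsn-1}, $\Phi_{+}$ is the $1$-sum of $\hat\Phi$ in direction $\arg(i\lambda)$, hence by Lemma~\ref{lem:diff_alg_and_summability} (closure under differentiation and algebraic operations) $u=\det(\mathrm{D}\Phi_{+})$ is the $1$-sum of $\det(\mathrm{D}\hat\Phi)=1$ on $S_{+}$. But the constant function $1$ is \emph{also} a $1$-sum of the formal series $1$, and by the uniqueness of $1$-sums on sectors of opening strictly greater than $\pi$ (the Remark following Definition~\ref{def:Gevrey-1_expansion}), we conclude $u\equiv 1$ directly. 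The entire machinery of your first paragraph (the divergence computation showing $Y(u)=0$) and your last paragraph (the coefficient-by-coefficient ODE analysis of flat first integrals of $\ynorm$) is therefore unnecessary: the rigidity you call ``the main obstacle'' is nothing more than Watson-type uniqueness on wide sectors, which is already built into the notion of $1$-summability.

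One further remark on logical dependencies: you invoke Theorem~\ref{Th: Th drsn-1}, which is one of the main results proved in \emph{this} paper, to establish a theorem that the paper attributes to the earlier work \cite{bittmann2}. This is not circular within the present paper (the proof of Theorem~\ref{Th: Th drsn-1} does not use Theorem~\ref{thm: Th ham}), but it does mean your argument cannot be the one originally given in \cite{bittmann2}, where $1$-summability of $\hat\Phi$ was not yet available and only weak Gevrey-$1$ asymptotics were known.
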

Let us fix a normal form $\ynorm$ as in Theorem \ref{thm: Th ham},
and consider two sectorial domains ${\displaystyle S_{\lambda}\times\left(\ww C^{2},0\right)}$
and $S_{-\lambda}\times\left(\ww C^{2},0\right)$ as in (\ref{eq: petits secteurs}).
Then, the Stokes diffeomorphisms $\left(\Phi_{\lambda},\Phi_{-\lambda}\right)$
defined in the previous subsection as 
\[
\begin{cases}
\Phi_{\lambda}:=\left(\Phi_{+}\circ\Phi_{-}^{-1}\right)_{\mid S_{\lambda}\times\left(\ww C^{2},0\right)}\\
\Phi_{-\lambda}:=\left(\Phi_{-}\circ\Phi_{+}^{-1}\right)_{\mid S_{-\lambda}\times\left(\ww C^{2},0\right)} & ,
\end{cases}
\]
are also transversally symplectic. 
\begin{defn}
We denote by $\Lambda_{\lambda}^{\omega}\left(\ynorm\right)$ $\big($\emph{resp.
}$\Lambda_{-\lambda}^{\omega}\left(\ynorm\right)$$\big)$ the group
of germs of sectorial fibered isotropies of $\ynorm$, admitting the
identity as Gevrey-1 asymptotic expansion in sectorial domains of
the form ${\displaystyle S_{\lambda}\times\left(\ww C^{2},0\right)}$
${\displaystyle \left(resp.\, S_{-\lambda}\times\left(\ww C^{2},0\right)\right)}$,
and which are transversally symplectic. 
\end{defn}
Let us denote by $\cro{\ynorm}_{\omega}$ the set of germs of vector
fields which are formally conjugate to $\ynorm$\emph{ via }(formal)
transversally symplectic diffeomorphisms tangent to the identity.
As a consequence of Theorems (\ref{thm: espace de module}) and (\ref{thm: Th ham}),
we can now state our third main result:
\begin{thm}
\label{th: espace de module symplectic}The map 
\begin{eqnarray*}
\quotient{\cro{\ynorm}_{\omega}}{\sdiffid} & \longrightarrow & \Lambda_{\lambda}^{\omega}\left(\ynorm\right)\times\Lambda_{-\lambda}^{\omega}\left(\ynorm\right)\\
Y & \longmapsto & \left(\Phi_{\lambda},\Phi_{-\lambda}\right)\,\,
\end{eqnarray*}
is a well-defined bijection.
\end{thm}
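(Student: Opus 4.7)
The plan is to deduce Theorem \ref{th: espace de module symplectic} directly from Theorem \ref{thm: espace de module} by verifying that each step of the correspondence respects the transversally symplectic structure. For well-definedness, let $Y\in\cro{\ynorm}_\omega$: Theorem \ref{thm: Th ham formel} provides a formal transversally symplectic normalizing map $\hat{\Phi}$, and Theorem \ref{thm: Th ham} asserts that its sectorial sums $\Phi_+,\Phi_-$ are themselves transversally symplectic. The compositions $\Phi_\lambda=(\Phi_+\circ\Phi_-^{-1})_{\mid S_\lambda\times(\ww C^2,0)}$ and $\Phi_{-\lambda}=(\Phi_-\circ\Phi_+^{-1})_{\mid S_{-\lambda}\times(\ww C^2,0)}$ are therefore transversally symplectic, while Theorem \ref{thm: espace de module} certifies they admit the identity as Gevrey-$1$ asymptotic expansion, placing them in $\Lambda_{\lambda}^{\omega}(\ynorm)\times\Lambda_{-\lambda}^{\omega}(\ynorm)$. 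If $Y'=\Psi_*Y$ for some $\Psi\in\sdiffid$, Proposition \ref{prop: unique normalizations} forces $\Phi'_\pm=\Phi_\pm\circ\Psi^{-1}$, and an easy computation shows the pair $(\Phi_\lambda,\Phi_{-\lambda})$ is unchanged, so the map descends to the quotient $\quotient{\cro{\ynorm}_\omega}{\sdiffid}$.

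For injectivity, I would take $Y,Y'\in\cro{\ynorm}_\omega$ yielding the same Stokes pair. Theorem \ref{thm: espace de module} already supplies $\Psi\in\fdiff[\ww C^3,0,\tx{Id}]$ with $\Psi_*Y=Y'$, and the only remaining task is to show $\Psi\in\sdiffid$. This $\Psi$ is forced by the sectorial identifications $\Psi=(\Phi'_+)^{-1}\circ\Phi_+$ on $S_+$ and $\Psi=(\Phi'_-)^{-1}\circ\Phi_-$ on $S_-$; by Theorem \ref{thm: Th ham}, each of the four sectorial maps is transversally symplectic, hence so is $\Psi$ on both wide sectors. Since $\Psi$ is in fact a genuine holomorphic germ at the origin, the identity $\Psi^*\omega\in\omega+\ps{\tx dx}$ valid on the two overlapping wide sectors propagates to a full neighborhood of the origin.

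For surjectivity, fix $(\Phi_\lambda,\Phi_{-\lambda})\in\Lambda_\lambda^\omega(\ynorm)\times\Lambda_{-\lambda}^\omega(\ynorm)$. Theorem \ref{thm: espace de module} provides a germ $Y\in\cro{\ynorm}$ realizing this cocycle through a Riemann--Hilbert type glueing: two copies of $\ynorm$ taken over $S_+\times(\ww C^2,0)$ and $S_-\times(\ww C^2,0)$ are pasted along the overlaps via $\Phi_\lambda$ and $\Phi_{-\lambda}$, producing sectorial normalizations $\Phi_+,\Phi_-$ and a global vector field $Y$ with $(\Phi_\pm)_*Y=\ynorm$. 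Because $\ynorm$ is transversally Hamiltonian and the cocycle is transversally symplectic, the glueing can be carried out within the category of transversally symplectic maps, so the constructed $\Phi_\pm$ are transversally symplectic and $Y$ inherits the transversally Hamiltonian property. Theorem \ref{thm: Th ham formel} applied to $Y$ then yields a formal transversally symplectic normalizing map, which must coincide with the formal expansion of $\Phi_\pm$ by the uniqueness in Theorem \ref{thm: forme normalel formelle}, showing $Y\in\cro{\ynorm}_\omega$.

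The main obstacle I anticipate is precisely this last point: one must verify that the cohomological equation underlying the Riemann--Hilbert glueing of Theorem \ref{thm: espace de module} admits solutions within the transversally symplectic category whenever its data is transversally symplectic. This should reduce to the observation that both $\ynorm$ and the prescribed Stokes isotropies preserve $\omega$ modulo $\ps{\tx dx}$, combined with a straightforward adaptation of the sectorial estimates already carried out in the proof of Theorem \ref{thm: espace de module}, where one now restricts the space of sectorial diffeomorphisms solving the homological equation to those satisfying $\det(\mathrm{D}\Phi)=1$.
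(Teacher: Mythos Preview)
Your treatment of well-definedness and injectivity is essentially the paper's (the paper dispatches both in one line by citing Theorems \ref{thm: espace de module} and \ref{thm: Th ham}), and your arguments are correct.

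The gap is in surjectivity. You correctly identify the obstacle: the Martinet--Ramis factorization (Theorem \ref{th: martinet ramis}) underlying the proof of Theorem \ref{thm: espace de module} produces sectorial pairs $(\phi_+,\phi_-)$ with no reason to be transversally symplectic. Your proposed fix --- ``restrict the space of sectorial diffeomorphisms solving the homological equation to those satisfying $\det(\mathrm{D}\Phi)=1$'' --- amounts to re-proving Martinet--Ramis inside the symplectic category, and this is not a straightforward adaptation of estimates: the construction in \cite{MR82} does not obviously preserve any volume constraint.

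The paper's route is different and simpler. It observes (Corollary \ref{cor: MR symplectic}) that one does not need a symplectic Martinet--Ramis at all: take the ordinary $(\phi_+,\phi_-)$, and note that since the cocycle $(\Phi_\lambda,\Phi_{-\lambda})$ is transversally symplectic, the chain rule forces $\det(\mathrm{D}\phi_+)=\det(\mathrm{D}\phi_-)$ on the overlap $(S_+\cap S_-)\times(\ww C^2,0)$. These therefore glue to a single analytic germ $f\in\cal O(\ww C^3,0)$ with $f(0)=1$. One then solves the elementary equation $(f\circ\psi)\cdot\partial\psi_1/\partial y_1=1$ for an analytic $\psi\in\fdiff[\ww C^3,0,\tx{Id}]$ (by integrating $f$ in $y_1$ and inverting), so that $\sigma_\pm:=\phi_\pm\circ\psi$ satisfy $\det(\mathrm{D}\sigma_\pm)=1$. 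Since $\psi$ is analytic, the correction does not disturb the cocycle: $\sigma_+\circ\sigma_-^{-1}=\phi_+\circ\phi_-^{-1}=\Phi_{\pm\lambda}$ on the respective narrow sectors. The rest of the surjectivity proof then runs exactly as in Theorem \ref{thm: espace de module}, now with transversally symplectic sectorial normalizations in hand.
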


\subsection{Outline of the paper}

~

In section $2$, we recall the different tools, notations and nomenclature
we will need regarding asymptotic expansion, Gevrey-1 series, 1-summability
and sectorial germs.

In section $3$, we prove the main theorems presented above, assuming
the Proposition \ref{prop: isotropies plates} holds.

In section $4$, we prove the key Proposition \ref{prop: isotropies plates}
by studying the automorphisms of the space of leaves. 

In section $5$, we give a description of the moduli space in Theorem
\ref{thm: espace de module} in terms of power series in the space
of leaves, and present some applications.

In section $6$, we present a generalization of Theorem \ref{thm: espace de module}
where we study the action of $\diff$ instead of $\fdiff$.

\tableofcontents{}

\section{Background}

We refer the reader to \cite{MR82}, \cite{malgrange1995sommation}
and \cite{ramis1993divergent} for a general and detailed introduction
to the theory of asymptotic expansion, Gevrey series and summability
(see also \cite{Stolo} for a useful discussion of these concepts).
We refer more precisely to \cite{bittmann2} when it comes to the
notion of \emph{weak 1-summability. \bigskip{}
}

We call $x\in\ww C$ the \emph{independent} variable and ${\displaystyle \mathbf{y}:=\left(y_{1},\dots,y_{n}\right)\in\ww C^{n}}$,
$n\in\ww N$, the \emph{dependent} variables. As usual we define ${\displaystyle \mathbf{y^{k}}:=y_{1}^{k_{1}}\dots y_{n}^{k_{n}}}$
for ${\displaystyle \mathbf{k}=\left(k_{1},\dots,k_{n}\right)\in\ww N^{n}}$,
and ${\displaystyle \abs{\mathbf{k}}=k_{1}+\dots+k_{n}}$. The notions
of asymptotic expansion, Gevrey series and 1-summability presented
here are always considered with respect to the independent variable
$x$ living in (open) sectors 
\[
S\left(r,\alpha,\beta\right)=\acc{x\in\ww C\mid0<\abs x<r\mbox{ and }\alpha<\arg\left(x\right)<\beta}\,\,,
\]
the dependent variable $\mathbf{y}$ belonging to poly-discs 
\[
\mathbf{D\left(0,r\right)}:=\acc{\mathbf{y}=\left(y_{1},\dots,y_{n}\right)\in\ww C^{n}\mid\abs{y_{1}}<r_{1},\dots\abs{y_{n}}<r_{n}}\,\,\,,
\]
of poly-radius ${\displaystyle \mathbf{r}=\left(r_{1},\dots,r_{n}\right)\in\left(\ww R_{>0}\right)^{n}}$.
Given an open subset $\cal U\subset\ww C^{n+1}$ , we denote by $\cal O\left({\cal U}\right)$
the algebra of holomorphic function in $\cal U$.

\subsection{Sectorial germs}

~

Let $\theta\in\ww R$, $\eta\in\ww R_{\geq0}$ and $n\in\ww N$. 
\begin{defn}
\label{def: sectorial germs}
\begin{enumerate}
\item An\emph{ x-sectorial neighborhood }(or simply \emph{sectorial neighborhood})
\emph{of the origin} (in $\ww C^{n+1}$) \emph{in the direction $\theta$
with opening $\eta$} is an open set $\cal S\subset\ww C^{n+1}$ such
that
\[
\cal S\supset S\left(r,\theta-\frac{\eta}{2}-\epsilon,\theta+\frac{\eta}{2}+\epsilon\right)\times\mathbf{D\left(0,r\right)}
\]
for some $r>0$, $\mathbf{r}\in\left(\ww R_{>0}\right)^{n}$ and $\epsilon>0$.
We denote by $\left(\germsect{\theta}{\eta},\leq\right)$ the directed
set formed by all such neighborhoods, equipped with the order relation
\begin{eqnarray*}
S_{1}\leq S_{2} & \Longleftrightarrow & S_{1}\supset S_{2}\,\,.
\end{eqnarray*}

\item The algebra of \emph{germs of holomorphic functions in a sectorial
neighborhood of the origin in the direction $\theta$ with opening
$\eta$} is the direct limit 
\[
\cal O\left(\cal S_{\theta,\eta}\right):=\underrightarrow{\lim}\,\cal O\left(\cal S\right)
\]
with respect to the directed system defined by $\acc{\cal O\left(\cal S\right):\cal S\in\germsect{\theta}{\eta}}$.
\end{enumerate}
\end{defn}
We now give the definition of \emph{a (germ of a) sectorial diffeomorphism}.
\begin{defn}
\label{def: sectorial diff}
\begin{enumerate}
\item Given an element $\cal S\in\cal S_{\theta,\eta}$, we denote by $\fdiff[\cal S,\tx{Id}]$
the set of holomorphic diffeomorphisms of the form 
\begin{eqnarray*}
\Phi:\cal S & \rightarrow & \Phi\left(\cal S\right)\\
\left(x,\mathbf{y}\right) & \mapsto & \left(x,\phi_{1}\left(x,\mathbf{y}\right),\phi_{2}\left(x,\mathbf{y}\right)\right)\,\,,
\end{eqnarray*}
such that ${\displaystyle \Phi\left(x,\mathbf{y}\right)-\tx{Id}\left(x,\mathbf{y}\right)=\tx O\left(\norm{x,\mathbf{y}}^{2}\right),\,\,\mbox{ as }\left(x,\mathbf{y}\right)\rightarrow\left(0,\mathbf{0}\right)\mbox{ in }\cal S.}$
\footnote{This condition implies in particular that $\Phi\left(\cal S\right)\in\germsect{\theta}{\eta}$.%
}
\item The set of \emph{germs of (fibered) sectorial diffeomorphisms in the
direction $\theta$ with opening $\eta$, tangent to the identity},
is the direct limit 
\[
\diffsect:=\underrightarrow{\lim}\,\fdiff[\cal S,\tx{Id}]
\]
with respect to the directed system defined by $\acc{\fdiff[\cal S,\tx{Id}]:\cal S\in\cal S_{\theta,\eta}}$.
We equip $\diffsect$ of a group structure as follows: given two germs
$\Phi,\Psi\in{\displaystyle \diffsect}$ we chose corresponding representatives
$\Phi_{0}\in\fdiff[\cal S,\tx{Id}]$ and $\Psi_{0}\in\fdiff[\cal T,\tx{Id}]$
with $\cal S,\cal T\in\cal S_{\theta,\eta}$ such that $\cal T\subset\Phi_{0}\left(\cal S\right)$
and let $\Psi\circ\Phi$ be the germ defined by $\Psi_{0}\circ\Phi_{0}$.
\footnote{One can prove that this definition is independent of the choice of
the representatives%
}
\end{enumerate}
\end{defn}
We will also need the notion of \emph{asymptotic sectors.}
\begin{defn}
\label{def: asymptotitc sector}An\emph{ (open) asymptotic sector
of the origin in the direction $\theta$ and with opening $\eta$}
is an open set $S\subset\ww C$ such that 
\[
S\in\bigcap_{0\leq\eta'<\eta}\cal S_{\theta,\eta'}\,\,.
\]
We denote by $\cal{AS}_{\theta,\eta}$ the set of all such (open)
asymptotic sectors. 
\end{defn}

\subsection{\label{sub: Strong-Gevrey-1-power} Gevrey-1 power series and 1-summability}

~

In this subsection we fix a formal power series 
\[
{\displaystyle \hat{f}\left(x,\mathbf{y}\right)=\sum_{k\geq0}f_{k}\left(\mathbf{y}\right)x^{k}=\sum_{\left(j_{0},\mathbf{j}\right)\in\ww N^{n+1}}f_{j_{0},\mathbf{j}}x^{j_{0}}\mathbf{y^{j}}\in\form{x,\mathbf{y}}}\,\,.
\]

\begin{defn}
\label{def:Gevrey-1_expansion}\label{prop: Borel_Laplace_summation}~
\begin{itemize}
\item An analytic (and bounded) function $f$ in a sectorial domain ${\displaystyle \sect r{\alpha}{\beta}\times\mathbf{D\left(0,r\right)}}$
admits $\hat{f}$ as \emph{Gevrey-1 asymptotic expansion} in this
domain, if for all closed sub-sector $S'\subset\sect r{\alpha}{\beta}$,
there exists $A,C>0$ such that: 
\[
\abs{f\left(x,\mathbf{y}\right)-\sum_{k=0}^{N-1}f_{k}\left(\mathbf{y}\right)x^{k}}\leq AC^{N}\left(N!\right)\abs x^{N}
\]
for all $N\in\ww N$ and $\left(x,\mathbf{y}\right)\in S'\times\mathbf{D}\left(\mathbf{0},\mathbf{r}\right)$.
\item A formal power series $\hat{f}\in\form{x,\mathbf{y}}$ is 1-summable
in the direction $\theta$ if and only if there exists a germ of a
sectorial holomorphic function $f_{\theta}\in\cal O\left(\cal S_{\theta,\pi}\right)$
which admits $\hat{f}$ as Gevrey-1 asymptotic expansion in some $\cal S\in\cal S_{\theta,\pi}$. 
\end{itemize}
\end{defn}
\begin{rem}
In the definition above, $f_{\theta}$ is unique $\big($as a germ
in ${\displaystyle \cal O\left(\cal S_{\theta,\pi}\right)}$$\big)$,
and is called the \emph{1-sum of $\hat{f}$ in the direction $\theta$}.\end{rem}
\begin{lem}
\label{lem:diff_alg_and_summability}The set $\Sigma_{\theta}\subset\form{x,\mathbf{y}}$
of $1$-summable power series in the direction $\theta$ is an algebra
closed under differentiation. Moreover the map
\begin{eqnarray*}
\Sigma_{\theta} & \longrightarrow & \cal O\left(\cal S_{\theta,\pi}\right)\\
\hat{f} & \longmapsto & f_{\theta}
\end{eqnarray*}
is an injective morphism of differential algebras.\end{lem}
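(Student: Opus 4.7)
The plan is to establish the four assertions (closure under $+$, $\times$, differentiation; injectivity of $\hat f \mapsto f_\theta$) via standard Gevrey-1 summability theory. Given $\hat{f}, \hat{g} \in \Sigma_\theta$ with respective 1-sums $f_\theta, g_\theta$, I first pick representatives analytic on a common sectorial domain $\cal S \in \cal S_{\theta, \pi}$ by intersection. Closure under addition is immediate from adding the two Gevrey-1 bounds on the remainders. Closure under differentiation follows from Cauchy's integral formula: for $\partial_{y_i}$ one integrates on a fixed circle inside the polydisc $\mathbf{D}(\mathbf{0}, \mathbf{r})$, while for $\partial_x$ one uses a circle in $x$ of radius $\rho |x|$ (with $\rho$ small and fixed) contained in a slightly smaller sectorial neighborhood. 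The Gevrey-1 bound at order $N$ for the remainder of $f_\theta$ transforms into a Gevrey-1 bound at order $N-1$ for that of $\partial_x f_\theta$, via the elementary inequality $(N+1)! \leq 2 N \cdot N!$, at the price of enlarging the multiplicative constants.

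Closure under multiplication is the classical, technically delicate step. The cleanest path is via the Borel--Laplace framework: $\Sigma_\theta$ is characterized as the space of formal series whose Borel transform $\cal B \hat f(\xi, \mathbf y) = \sum_k f_k(\mathbf y) \xi^k / k!$ converges near $\xi = 0$, extends analytically to a neighborhood of the ray $\acc{\arg(\xi) = \theta}$ with at most exponential growth there, and whose Laplace transform along that ray reproduces $f_\theta$. Both analytic continuation along the ray and exponential growth are preserved under the convolution product dual to the Cauchy product of formal series; by the convolution theorem one deduces that $f_\theta g_\theta$ is itself the 1-sum of $\hat f \hat g$ in direction $\theta$. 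This simultaneously proves that $\Sigma_\theta$ is a subalgebra and that $\hat f \mapsto f_\theta$ is a ring morphism; combined with the preceding paragraph, it is a morphism of differential algebras.

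Injectivity reduces to the uniqueness of Poincar\'e asymptotic expansions. If $f_\theta \equiv 0$ as a germ, an induction on $k \geq 0$ shows that $f_k \equiv 0$: assuming $f_0, \ldots, f_{k-1}$ all vanish, the Gevrey-1 estimate at order $k+1$ yields
\[
|f_k(\mathbf{y}) x^k + \tx O(x^{k+1})| \leq A C^{k+1} (k+1)! |x|^{k+1},
\]
and dividing by $|x|^k$ and letting $x \to 0$ in $\cal S$ gives $f_k \equiv 0$. The main technical obstacle is the multiplicative closure, which is handled cleanly via the Borel--Laplace correspondence; all other steps are direct applications of Cauchy's integral formula and the uniqueness of asymptotic expansions.
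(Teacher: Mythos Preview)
Your proof is correct and follows the classical route. Note, however, that the paper does not actually supply its own proof of this lemma: it is stated as a standard background fact in Section~2, with the reader referred at the outset of that section to the literature on Gevrey asymptotics and summability (Martinet--Ramis~\cite{MR82}, Malgrange~\cite{malgrange1995sommation}, Ramis~\cite{ramis1993divergent}). Your outline---addition by summing remainders, differentiation via Cauchy's formula on a circle of radius proportional to $|x|$, multiplication via the Borel--Laplace correspondence (convolution preserves analytic continuation along the ray and exponential growth), and injectivity by uniqueness of Poincar\'e expansions---is precisely the argument one finds in those references, so there is nothing to compare against beyond confirming that you have reproduced the standard proof.
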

\begin{prop}
\label{prop: compositon summable}Let $\hat{\Phi}\left(x,\mathbf{y}\right)\in\form{x,\mathbf{y}}$
be 1-summable in directions $\theta$ and $\theta-\pi$, and let $\Phi_{+}\left(x,\mathbf{y}\right)$
and $\Phi_{-}\left(x,\mathbf{y}\right)$ be its 1-sums directions
$\theta$ and $\theta-\pi$ respectively. Let also ${\displaystyle \hat{f}_{1}\left(x,\mathbf{z}\right),\dots,\hat{f}_{n}\left(x,\mathbf{z}\right)}$
be 1-summable in directions $\theta$, $\theta-\pi$, and $f_{1,+},\dots,f_{n,+}$,
and $f_{1,-},\dots,f_{n,-}$ be their 1-sums in directions $\theta$
and $\theta-\pi$ respectively. Assume that 
\begin{equation}
\hat{f}_{j}\left(0,\mathbf{0}\right)=0\mbox{, for all }j=1,\dots,n\,\,.\label{eq: condition composition sommable}
\end{equation}
 Then 
\[
\hat{\Psi}\left(x,\mathbf{z}\right):=\hat{\Phi}\left(x,\hat{f}_{1}\left(x,\mathbf{z}\right),\dots,\hat{f}_{n}\left(x,\mathbf{z}\right)\right)
\]
 is 1-summable in directions $\theta,\theta-\pi$, and its 1-sum in
the corresponding direction is 
\[
\Psi_{\pm}\left(x,\mathbf{z}\right):=\Phi_{\pm}\left(x,f_{1,\pm}\left(x,\mathbf{z}\right),\dots,f_{n,\pm}\left(x,\mathbf{z}\right)\right)\,\,\,,
\]
which is a germ of a sectorial holomorphic function in directions
$\theta$ and $\theta-\pi$.\end{prop}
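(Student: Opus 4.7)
The plan is to verify three claims in turn: (i) $\hat{\Psi}$ is a well-defined element of $\form{x,\mathbf{z}}$; (ii) the composition $\Psi_{\pm}$ makes sense as a germ of sectorial holomorphic function in direction $\theta$ (resp. $\theta-\pi$); (iii) $\Psi_{\pm}$ admits $\hat{\Psi}$ as Gevrey-1 asymptotic expansion in a sector of opening $\pi$ in that direction, whence the announced 1-summability by Definition~\ref{def:Gevrey-1_expansion}. The argument in direction $\theta-\pi$ being obtained from the one in direction $\theta$ by rotation, I focus on the latter.

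For (i), since $\hat{f}_{j}(0,\mathbf{0})=0$ for each $j$, each monomial $x^{k}\mathbf{y}^{\mathbf{j}}$ of $\hat{\Phi}$ pulled back by $\hat{\mathbf{f}}$ has valuation at least $k+|\mathbf{j}|$ in $(x,\mathbf{z})$, so only finitely many monomials of $\hat{\Phi}$ contribute to each coefficient of $\hat{\Psi}$. For (ii), fix representatives $\Phi_{+}\in\cal O(\cal S_{\Phi}\times\mathbf{D}(\mathbf{0},\mathbf{r}_{\Phi}))$ with $\cal S_{\Phi}\in\germsect{\theta}{\pi}$ and $f_{j,+}\in\cal O(\cal S_{j}\times\mathbf{D}(\mathbf{0},\mathbf{r}_{j}))$. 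The hypothesis $\hat{f}_{j}(0,\mathbf{0})=0$, combined with the Gevrey-1 estimate of order $N=1$, forces $f_{j,+}(x,\mathbf{z})\to 0$ as $(x,\mathbf{z})\to(0,\mathbf{0})$ inside any closed sub-sector. After shrinking the polydisc of $\mathbf{z}$ and taking a smaller $\cal S\in\germsect{\theta}{\pi}$, we may assume $\mathbf{f}_{+}(\cal S\times\mathbf{D}(\mathbf{0},\mathbf{r}))\subset\mathbf{D}(\mathbf{0},\mathbf{r}_{\Phi})$, so that $\Psi_{+}(x,\mathbf{z}):=\Phi_{+}(x,\mathbf{f}_{+}(x,\mathbf{z}))$ is well-defined and holomorphic on $\cal S\times\mathbf{D}(\mathbf{0},\mathbf{r})$.

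For (iii), fix a closed sub-sector $S'\subset\cal S$ and use the Gevrey-1 expansions
\[
\Phi_{+}(x,\mathbf{y})=\sum_{k=0}^{N-1}\Phi_{k}(\mathbf{y})x^{k}+R^{\Phi}_{N}(x,\mathbf{y}),\quad f_{j,+}(x,\mathbf{z})=\sum_{\ell=0}^{N-1}f_{j,\ell}(\mathbf{z})x^{\ell}+R^{j}_{N}(x,\mathbf{z}),
\]
with $|R^{\Phi}_{N}|\leq A_{\Phi}C_{\Phi}^{N}N!|x|^{N}$ and $|R^{j}_{N}|\leq A_{j}C_{j}^{N}N!|x|^{N}$ on $S'\times\mathbf{D}(\mathbf{0},\mathbf{r})$. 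Substituting the $f_{j,+}$-expansions into that of $\Phi_{+}$, Taylor-expanding each $\Phi_{k}(\mathbf{f}_{+}(x,\mathbf{z}))$ in $\mathbf{y}$ around $\mathbf{0}$, and regrouping by powers of $x$, the terms of order $<N$ combine (by uniqueness of formal composition) to give exactly the partial sum $\sum_{a<N}\Psi_{a}(\mathbf{z})x^{a}$ of $\hat{\Psi}$. The remainder splits into $R^{\Phi}_{N}(x,\mathbf{f}_{+}(x,\mathbf{z}))$ plus finitely many cross-terms each containing at least one factor $R^{j}_{N}$ or a monomial of $f_{j,+}$ of order $\geq N-k$, multiplied against a multi-derivative $\partial_{\mathbf{y}}^{\mathbf{m}}\Phi_{k}(\mathbf{0})$. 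Bounding those derivatives by Cauchy's estimate on a polydisc strictly contained in $\mathbf{D}(\mathbf{0},\mathbf{r}_{\Phi})$, and summing, yields a global estimate of the form $AC^{N}N!|x|^{N}$ for suitable constants $A,C>0$.

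\textbf{Main obstacle.} The hard part is the combinatorial bookkeeping at the end of (iii): sums, over pairs $(k,\ell)$ with $k+\ell\geq N$, of products $k!\,\ell!$ weighted by Cauchy estimates on the $\Phi_{k}$ must be controlled by $C^{N}N!$ for some new constant $C$. The condition $\hat{f}_{j}(0,\mathbf{0})=0$ is crucial here, as it is precisely what allows the inner expansions to be substituted without producing constant terms whose powers would destroy the factorial tracking. This closure-under-composition property of Gevrey-1 asymptotic expansions is classical, and the quantitative estimates are standard material in references such as \cite{MR82} and \cite{ramis1993divergent}, which I would invoke to conclude.
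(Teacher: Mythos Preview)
The paper does not actually prove this proposition here: its proof reads simply ``See \cite{bittmann1}.'' Your sketch follows the standard route---formal well-definedness via the vanishing hypothesis, analytic well-definedness by shrinking domains, then closure of Gevrey-1 asymptotic expansions under composition---which is presumably what is carried out in that reference (and is indeed classical material in \cite{MR82,ramis1993divergent}, as you note). Your outline is sound; the only point left genuinely open is the factorial bookkeeping you flag as the ``main obstacle'', but deferring that to the cited sources is exactly what the paper itself does.
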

\begin{proof}
See \cite{bittmann1}.
\end{proof}
Consider $\hat{Y}$ a formal singular vector field at the origin and
a formal fibered diffeomorphism $\hat{\varphi}:\left(x,\mathbf{y}\right)\mapsto\left(x,\hat{\phi}\left(x,\mathbf{y}\right)\right)$.
Assume that both $\hat{Y}$ and $\hat{\varphi}$ are 1-summable in
directions $\theta$ and $\theta-\pi$, for some $\theta\in\ww R$,
and denote by $Y_{+},Y_{-}$ (\emph{resp. $\varphi_{+},\varphi_{-}$})
their 1-sums in directions $\theta$ and $\theta-\pi$ respectively.
As a consequence of Proposition \ref{prop: compositon summable} and
Lemma \ref{lem:diff_alg_and_summability}, we can state the following:
\begin{cor}
\label{cor: summability push-forward}Under the assumptions above,
$\hat{\varphi}_{*}\left(\hat{Y}\right)$ is 1-summable in both directions
$\theta$ and $\theta-\pi$, and its 1-sums in these directions are
$\varphi_{+}\left(Y_{+}\right)$ and $\varphi_{-}\left(Y_{-}\right)$
respectively.
\end{cor}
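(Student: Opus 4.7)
The push-forward expands as
\begin{equation*}
\hat{\varphi}_*(\hat{Y}) = (D\hat{\varphi} \cdot \hat{Y}) \circ \hat{\varphi}^{-1},
\end{equation*}
so the plan is to treat successively the Jacobian, the matrix product, the formal inversion, and finally the composition, using only Lemma \ref{lem:diff_alg_and_summability} and Proposition \ref{prop: compositon summable}.

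By Lemma \ref{lem:diff_alg_and_summability} each entry of $D\hat{\varphi}$ belongs to $\Sigma_{\theta} \cap \Sigma_{\theta - \pi}$, with 1-sums the corresponding entries of $D\varphi_{+}$ and $D\varphi_{-}$ (the summation morphism commutes with partial derivatives). Since $\Sigma_{\theta}$ and $\Sigma_{\theta - \pi}$ are algebras, the components of the vector-valued formal series $D\hat{\varphi} \cdot \hat{Y}$ are 1-summable in both directions, with 1-sums the components of $D\varphi_{\pm} \cdot Y_{\pm}$.

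The main obstacle is to verify that the formal fibered inverse $\hat{\varphi}^{-1} : (x, \mathbf{z}) \mapsto (x, \hat{\psi}(x, \mathbf{z}))$ is itself 1-summable in the two directions, with 1-sums $\varphi_{+}^{-1}$ and $\varphi_{-}^{-1}$. I would establish this through a Gevrey-1 implicit function theorem applied to the identity $\hat{\phi}(x, \hat{\psi}(x, \mathbf{z})) = \mathbf{z}$. Writing $\hat{\phi}(x, \mathbf{y}) = A(x) \mathbf{y} + \hat{R}(x, \mathbf{y})$ with $A(0)$ invertible (as $\hat{\varphi}$ is a diffeomorphism) and $\hat{R}$ of order $\geq 2$ in $\mathbf{y}$, this relation becomes the contracting fixed-point equation
\begin{equation*}
\hat{\psi}(x, \mathbf{z}) = A(x)^{-1} \big( \mathbf{z} - \hat{R}(x, \hat{\psi}(x, \mathbf{z})) \big).
\end{equation*}
Running a Picard iteration on a fixed sub-sector, in the Banach algebra of bounded holomorphic functions equipped with a Gevrey-1 seminorm controlling $(N!)^{-1}|x|^{-N}$ times the $N$-th remainder, simultaneously produces the sectorial analytic inverse $\psi_{\pm}$ and uniform Gevrey-1 bounds for its remainders, hence a Gevrey-1 asymptotic expansion. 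Watson's lemma (uniqueness of Gevrey-1 expansions on sectors of opening $> \pi$) identifies this expansion with $\hat{\psi}$, while uniqueness of the sectorial analytic inverse forces $\psi_{\pm} = \varphi_{\pm}^{-1}$.

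Finally, since $\hat{\varphi}^{-1}(0, \mathbf{0}) = (0, \mathbf{0})$, condition \eqref{eq: condition composition sommable} is satisfied, and Proposition \ref{prop: compositon summable} applies to the outer 1-summable series $D\hat{\varphi} \cdot \hat{Y}$ and the inner 1-summable series $\hat{\varphi}^{-1}$. It yields at once that $\hat{\varphi}_{*}(\hat{Y})$ is 1-summable in directions $\theta$ and $\theta - \pi$, with 1-sums
\begin{equation*}
(D\varphi_{\pm} \cdot Y_{\pm}) \circ \varphi_{\pm}^{-1} = \varphi_{\pm}(Y_{\pm}),
\end{equation*}
as claimed.
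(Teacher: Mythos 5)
Your proof is correct and follows the route the paper intends: the corollary is stated there without proof, as an immediate consequence of Lemma \ref{lem:diff_alg_and_summability} and Proposition \ref{prop: compositon summable}, and your argument supplies exactly the missing details, the only genuinely nontrivial one being the $1$-summability of the formal inverse $\hat{\varphi}^{-1}$ with $1$-sums $\varphi_{\pm}^{-1}$, which your Gevrey-1 fixed-point argument combined with Watson's lemma on sectors of opening greater than $\pi$ handles adequately. One cosmetic remark: since a fibered diffeomorphism tangent to the identity may contain $\mathbf{y}$-independent terms of positive order in $x$, the decomposition should read $\hat{\phi}\left(x,\mathbf{y}\right)=c\left(x\right)+A\left(x\right)\mathbf{y}+\hat{R}\left(x,\mathbf{y}\right)$ with $\hat{R}$ of order at least $2$ in $\mathbf{y}$, which does not affect the contraction.
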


\subsection{An important result by Martinet and Ramis}

~

We are going to make an essential use of an isomorphism theorem proved
in \cite{MR82}. This result is of paramount importance in the present
paper since it will be a key tool in the proofs of both Theorems~\ref{Th: Th drsn}
and \ref{thm: espace de module} (see section~\ref{sec: analytic classification}).

Let us consider two open asymptotic sectors $\cal S$ and $\cal S'$
at the origin in directions $\theta$ and $\theta-\pi$ respectively,
both of opening $\pi$: 
\begin{eqnarray*}
S & \in & \cal{AS}_{\theta,\pi}\\
S' & \in & \cal{AS}_{\theta-\pi,\pi}
\end{eqnarray*}
(see Definition \ref{def: asymptotitc sector}). In this particular
setting, the cited theorem can be stated as follows. 
\begin{thm}
\label{th: martinet ramis}\cite[Théorème 5.2.1]{MR82} Consider a
pair of germs of sectorial diffeomorphisms 
\[
\left(\varphi,\varphi'\right)\in\diffsect[\theta][0]\times\diffsect[\theta-\pi][0]
\]
such that $\varphi$ and $\varphi'$ extend analytically and admit
the identity as Gevrey-1 asymptotic expansion in $S\times\left(\ww C^{2},0\right)$
and $S'\times\left(\ww C^{2},0\right)$ respectively. Then, there
exists a pair $\left(\phi_{+},\phi_{-}\right)$ of germs of sectorial
fibered diffeomorphisms 
\[
\left(\phi_{+},\phi_{-}\right)\in\diffsect[\theta+\frac{\pi}{2}][\eta]\times\diffsect[\theta-\frac{\pi}{2}][\eta]
\]
with $\eta\in\left]\pi,2\pi\right[$, which extend analytically in
$S_{+}\times\left(\ww C^{2},0\right)$ and $S_{-}\times\left(\ww C^{2},0\right)$
respectively, for some ${\displaystyle S_{+}\in\cal{AS}_{\theta+\frac{\pi}{2},2\pi}}$
and ${\displaystyle S_{-}\in\cal{AS}_{\theta-\frac{\pi}{2},2\pi}}$
, such that:
\[
\begin{cases}
\phi_{+}\circ\left(\phi_{-}\right)_{\mid S\times\left(\ww C^{2},0\right)}^{-1}=\varphi\\
\phi_{+}\circ\left(\phi_{-}\right)_{\mid S'\times\left(\ww C^{2},0\right)}^{-1}=\varphi' & \,\,.
\end{cases}
\]
There also exists a formal diffeomorphism $\hat{\phi}$ which is tangent
to the identity, such that $\phi_{+}$ and $\phi_{-}$ both admit
$\hat{\phi}$ as Gevrey-1 asymptotic expansion in $S_{+}\times\left(\ww C^{2},0\right)$
and $S_{-}\times\left(\ww C^{2},0\right)$ respectively.
\end{thm}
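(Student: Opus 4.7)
The plan is to prove this non-abelian splitting theorem by combining the classical (abelian) Ramis-Sibuya theorem with a Newton-type iteration that controls the non-linearity of composition. The guiding principle is that near the identity, composition of flat fibered diffeomorphisms differs from addition of their ``flat parts'' only by doubly-flat remainders, so a quadratically convergent scheme can upgrade an additive splitting into a multiplicative one.

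First I would record the data additively. Writing $\varphi(x,\mathbf{y})=(x,\mathbf{y}+R(x,\mathbf{y}))$ with $R$ Gevrey-1 flat on $S$, and likewise $\varphi'=\tx{Id}+R'$ on $S'$, the pair $(R,R')$ defines a $1$-cochain with values in the sheaf of vector-valued Gevrey-1 flat holomorphic functions on the circle of directions around $0$. Applied coordinate by coordinate, the classical abelian Ramis-Sibuya theorem produces holomorphic functions $\psi_+,\psi_-$ on wide sectors $S_+\in\asympsect{\theta+\frac{\pi}{2}}{2\pi}$ and $S_-\in\asympsect{\theta-\frac{\pi}{2}}{2\pi}$, both admitting a common Gevrey-1 formal power series $\hat{\psi}$ as asymptotic expansion, with $\psi_+-\psi_-=R$ on $S$ and $\psi_+-\psi_-=R'$ on $S'$. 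This yields first candidates $\phi_\pm^{(0)}:=\tx{Id}+\psi_\pm$, which solve the splitting problem to first order but not exactly.

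The main obstacle is the non-linear correction: we require the multiplicative cocycle identities $\phi_+\circ\phi_-^{-1}=\varphi$ on $S\times\wcc$ and $\phi_+\circ\phi_-^{-1}=\varphi'$ on $S'\times\wcc$, not merely additive agreement. To iterate, at stage $n$ I would form the defects $\delta^{(n)}:=\varphi\circ\phi_-^{(n)}\circ(\phi_+^{(n)})^{-1}$ and its primed analogue on $S'$; each defect is still tangent to the identity, and because composition at the identity linearizes to addition, the flatness rate of $\delta^{(n)}-\tx{Id}$ is strictly better than at the previous stage (roughly doubling, as in a Newton iteration). Applying abelian Ramis-Sibuya again to the new cochain yields corrections $\epsilon_\pm^{(n)}$ on the same wide sectors, and one updates $\phi_\pm^{(n+1)}:=(\tx{Id}+\epsilon_\pm^{(n)})\circ\phi_\pm^{(n)}$. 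The technical heart is proving convergence of this scheme in a Banach space of Gevrey-1 flat germs with appropriately controlled exponential weights; an arbitrarily small shrinking of sector opening is incurred at each Ramis-Sibuya step but can be arranged to be summable, so the limits $\phi_\pm:=\lim_n\phi_\pm^{(n)}$ live on honest wide sectors of opening $\eta\in\,]\pi,2\pi[$. Finally, the existence of a common formal series $\hat{\phi}$ tangent to the identity with $\phi_+$ and $\phi_-$ both admitting it as Gevrey-1 asymptotic expansion follows from Watson's lemma: each $\phi_\pm^{(n)}$ is tangent to the identity, and uniqueness of Gevrey-1 expansions on sectors of opening greater than $\pi$ forces $\phi_+$ and $\phi_-$ to share a single formal jet at $0$.
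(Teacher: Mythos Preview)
The paper does not give its own proof of this statement: Theorem~\ref{th: martinet ramis} is quoted verbatim as \cite[Th\'eor\`eme 5.2.1]{MR82} and used as a black box. So there is no in-paper argument to compare against; what you have written is a sketch of a proof of the cited Martinet--Ramis result itself.

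Your outline is broadly faithful to the strategy in \cite{MR82}: reduce the non-abelian cocycle to a sequence of abelian (additive) Ramis--Sibuya problems and iterate, exploiting that near the identity composition and addition agree up to quadratically smaller remainders. Two points deserve tightening. First, in the iteration you must track not only exponential flatness rates but also the shrinking of the polydisc in $\mathbf{y}$ needed at each composition step; the Martinet--Ramis argument handles this by working with a fixed majorant series and showing the corrections are summable in that norm, not merely in an exponential weight in $x$. Second, the last paragraph conflates two things: Watson's lemma gives uniqueness of a Gevrey-1 expansion on a sector of opening $>\pi$, but the existence of a \emph{common} formal series $\hat{\phi}$ for $\phi_+$ and $\phi_-$ is rather a consequence of the fact that $\phi_+\circ\phi_-^{-1}$ is flat on both intersection sectors, which forces the two individual asymptotic expansions (each of which exists by the abelian Ramis--Sibuya input at every stage and passes to the limit) to coincide. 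Phrasing it as ``Watson's lemma'' is not quite the right attribution.
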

In particular, in the theorem above $\hat{\phi}$ is $1-$summable
in every direction except $\theta$ and $\theta-\pi$, and its 1-sums
in directions $\theta+\frac{\pi}{2}$ and $\theta-\frac{\pi}{2}$
respectively are $\phi_{+}$ and $\phi_{-}$. For future use, we are
going to prove a \emph{``transversally symplectic'' }version of
the above theorem.
\begin{cor}
\label{cor: MR symplectic}With the assumptions and notations of Theorem
\ref{th: martinet ramis}, if $\varphi$ and $\varphi'$ both are
transversally symplectic (see Definition \ref{def: intro}), then
there exists a germ of an analytic fibered diffeomorphism $\psi\in\fdiff[\ww C^{3},0,\tx{Id}]$
(tangent to the identity), such that
\[
\sigma_{+}:=\phi_{+}\circ\psi\mbox{ and }\sigma_{-}:=\phi_{-}\circ\psi
\]
 both are transversally symplectic. Moreover we also have:
\[
\begin{cases}
\sigma_{+}\circ\left(\sigma_{-}\right)_{\mid S\times\left(\ww C^{2},0\right)}^{-1}=\varphi\\
\sigma_{+}\circ\left(\sigma_{-}\right)_{\mid S'\times\left(\ww C^{2},0\right)}^{-1}=\varphi' & \,\,.
\end{cases}
\]
\end{cor}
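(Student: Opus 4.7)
The plan is to observe that the transversally symplectic condition is, for fibered diffeomorphisms, equivalent to $\det\left(\mathrm{D}_{\mathbf{y}}\Phi\right)=1$, and then to construct $\psi$ as a global analytic ``volume corrector'' that absorbs the discrepancy between $\det\left(\mathrm{D}_{\mathbf{y}}\phi_{+}\right)$ and $1$ (and simultaneously between $\det\left(\mathrm{D}_{\mathbf{y}}\phi_{-}\right)$ and $1$). Crucially, since $\psi$ is the same on both sides, $\sigma_{+}\circ\sigma_{-}^{-1}=\phi_{+}\circ\psi\circ\psi^{-1}\circ\phi_{-}^{-1}=\phi_{+}\circ\phi_{-}^{-1}$, so the cocycle identities are automatically preserved.

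First, I would set $h_{\pm}:=\det\left(\mathrm{D}_{\mathbf{y}}\phi_{\pm}\right)\in\cal O\left(S_{\pm}\times\left(\ww C^{2},0\right)\right)$. Since $\varphi$ and $\varphi'$ are transversally symplectic, the chain rule applied to $\varphi=\phi_{+}\circ\phi_{-}^{-1}$ on $S\times\left(\ww C^{2},0\right)$ yields $h_{+}\circ\phi_{-}^{-1}=h_{-}\circ\phi_{-}^{-1}$, hence $h_{+}=h_{-}$ on (a representative of) the germ $S\times\left(\ww C^{2},0\right)$; the same holds on $S'\times\left(\ww C^{2},0\right)$. Therefore $h_{+}$ and $h_{-}$ glue to a single analytic function $h$ on $\left(S_{+}\cup S_{-}\right)\times\left(\ww C^{2},0\right)$, which is an open set covering a full punctured neighborhood of $\left\{ x=0\right\}$. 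As $\phi_{\pm}$ are asymptotically tangent to the identity, $h$ is bounded with $h\to1$ at the origin, so Riemann's removable singularity theorem extends it to a germ $h\in\germ{x,\mathbf{y}}^{\times}$ with $h\left(0\right)=1$.

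Next, I construct $\psi\in\fdiff[\ww C^{3},0,\tx{Id}]$ solving the equation
\[
\left(h\circ\psi\right)\cdot\det\left(\mathrm{D}_{\mathbf{y}}\psi\right)=1~,
\]
which is exactly what is needed to ensure $\det\left(\mathrm{D}_{\mathbf{y}}\left(\phi_{\pm}\circ\psi\right)\right)=1$. A concrete choice is $\psi\left(x,y_{1},y_{2}\right)=\left(x,y_{1},f\left(x,y_{1},y_{2}\right)\right)$: setting $H\left(x,y_{1},z\right):=\int_{0}^{z}h\left(x,y_{1},t\right)\mathrm{d}t$, the equation becomes $H\left(x,y_{1},f\left(x,y_{1},y_{2}\right)\right)=y_{2}$. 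Since $\partial_{z}H\left(0\right)=h\left(0\right)=1$, the analytic implicit function theorem provides a unique analytic solution $f$ with $f\left(x,y_{1},y_{2}\right)=y_{2}+\mathrm{O}\left(\norm{\left(x,\mathbf{y}\right)}^{2}\right)$, so $\psi$ is indeed a fibered analytic diffeomorphism tangent to the identity.

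Finally, I set $\sigma_{\pm}:=\phi_{\pm}\circ\psi$. By the construction of $\psi$ and the fact that $h_{\pm}=h$ on $S_{\pm}\times\left(\ww C^{2},0\right)$, both $\sigma_{+}$ and $\sigma_{-}$ satisfy $\det\left(\mathrm{D}_{\mathbf{y}}\sigma_{\pm}\right)=\left(h\circ\psi\right)\cdot\det\left(\mathrm{D}_{\mathbf{y}}\psi\right)=1$, and the cocycle identities $\sigma_{+}\circ\sigma_{-}^{-1}=\varphi$ on $S\times\left(\ww C^{2},0\right)$ and $\sigma_{+}\circ\sigma_{-}^{-1}=\varphi'$ on $S'\times\left(\ww C^{2},0\right)$ follow trivially from cancellation of $\psi$. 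The only step requiring care is the extension across $\left\{ x=0\right\}$: one must check that the Gevrey-1 asymptotics of $h_{\pm}$ yield genuine boundedness near $\left\{ x=0\right\}$ and that the overlap $S\cup S'$ together with $S_{+}\cup S_{-}$ actually covers a punctured neighborhood in $x$; this is exactly the content of the covering assumption in Theorem \ref{th: martinet ramis} and is the main point where the geometry of the sectors enters.
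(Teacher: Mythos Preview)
Your proof is correct and follows essentially the same strategy as the paper's own argument: both use the chain rule on $\varphi=\phi_{+}\circ\phi_{-}^{-1}$ and $\varphi'$ to show that $\det\left(\mathrm{D}\phi_{+}\right)=\det\left(\mathrm{D}\phi_{-}\right)$ on the overlap, extend by Riemann's theorem to a global analytic unit, and then build the corrector $\psi$ by integrating this unit in one of the $y$-variables and applying the inverse/implicit function theorem. The only cosmetic differences are that the paper perturbs $y_{1}$ instead of $y_{2}$ and phrases the last step via the inverse function theorem rather than the implicit function theorem.
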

\begin{proof}
We recall that for any germ $\varphi$ of a sectorial fibered diffeomorphism
which is tangent to the identity, $\varphi$ is transversally symplectic
if and only if $\det\left(\tx D\varphi\right)=1$. 

First of all, let us show that 
\[
\det\left(\tx D\phi_{+}\right)=\det\left(\tx D\phi_{-}\right)\mbox{ in }\left(S_{+}\cap S_{-}\right)\times\left(\ww C^{2},0\right)\,\,\,.
\]
Since $\phi_{+}$ and $\phi_{-}$ both are sectorial fibered diffeomorphism
which are tangent to the identity and transversally symplectic, then
\[
\det\left(\phi_{+}\circ\left(\phi_{-}\right)_{\mid\left(S_{+}\cap S_{-}\right)\times\left(\ww C^{2},0\right)}^{-1}\right)=1\,\,.
\]
The \emph{chain rule} implies immediately that 
\[
\det\left(\tx D\phi_{+}\right)=\det\left(\tx D\phi_{-}\right)\mbox{ in }\left(S_{+}\cap S_{-}\right)\times\left(\ww C^{2},0\right)\,\,\,.
\]
Thus, this equality allows us to define (thanks to the Riemann's Theorem
of removable singularities) a germ of analytic function $f\in\cal O\left(\ww C^{3},0\right)$.
Notice that $f\left(0,0,0\right)=1$ because $\phi_{+}$ and $\phi_{-}$
are tangent to the identity. Now, let us look for an element $\psi\in\fdiff[\ww C^{3},0,\tx{Id}]$
of the form 
\begin{equation}
\psi:\left(x,y_{1},y_{2}\right)\mapsto\left(x,\psi_{1}\left(x,\mathbf{y}\right),y_{2}\right)\label{eq: rectififcation symplectic}
\end{equation}
 such that 
\[
\sigma_{+}:=\phi_{+}\circ\psi\mbox{ and }\sigma_{-}:=\phi_{-}\circ\psi
\]
 both be transversally symplectic. An easy computation gives: 
\[
\det\left(\sigma_{\pm}\right)=\left(\det\left(\tx D\phi_{\pm}\right)\circ\psi\right)\det\left(\tx D\psi\right)=1
\]
\emph{i.e.}
\[
\left(f\circ\psi\right)\det\left(\mbox{D}\psi\right)=1\,\,\,.
\]
According to (\ref{eq: rectififcation symplectic}), we must have:
\begin{equation}
\left(f\circ\psi\right)\ppp{\psi_{1}}{y_{1}}=1\,\,.\label{eq: equa diff rectification symplectic}
\end{equation}
Let us define 
\[
F\left(x,y_{1},y_{2}\right):=\int_{0}^{y_{1}}f\left(x,z,y_{2}\right)\tx dz\,\,,
\]
so that (\ref{eq: equa diff rectification symplectic}) can be integrated
as 
\[
F\circ\psi=y_{1}+h\left(x,y_{2}\right)\,\,\,,
\]
for some $h\in\germ{x,y_{2}}$. Notice that 
\[
\ppp F{y_{1}}\left(0,0,0\right)=1
\]
since $f\left(0,0,0\right)=1$. Let us chose $h=0$. Then, we have
to solve 
\[
F\circ\psi=y_{1}\,\,\,,
\]
with unknown $\psi\in\fdiff[\ww C^{3},0,\tx{Id}]$ as in (\ref{eq: equa diff rectification symplectic}).
If we define
\[
\Phi:\left(x,\mathbf{y}\right)\mapsto\left(x,F\left(x,\mathbf{y}\right),y_{2}\right)\,\,,
\]
the latter problem is equivalent to find $\psi$ as above such that:
\[
\Phi\circ\psi=\tx{Id}\,\,\,.
\]
Since $\tx D\Phi_{0}=\tx{Id}$, the inverse function theorem gives
us the existence of the germ $\psi=\Phi^{-1}\in\fdiff[\ww C^{3},0,\tx{Id}]$
.
\end{proof}

\subsection{1-summability implies weakly 1-summability}

~

Any function $f\left(x,\mathbf{y}\right)$ analytic in a domain $\cal U\times\mathbf{D}\left(\mathbf{0},\mathbf{r}\right)$,
where $\cal U\subset\ww C$ is open, and bounded in any domain $\cal U\times\overline{\mathbf{D}}\left(\mathbf{0},\mathbf{r'}\right)$
with $r'_{1}<r_{1},\dots,r'_{n}<r_{n}$, can be written 
\begin{equation}
f\left(x,\mathbf{y}\right)=\sum_{\mathbf{j}\in\ww N^{n}}F_{\mathbf{j}}\left(x\right)\mathbf{y}^{\mathbf{j}}\qquad,\label{eq: developpement selon y}
\end{equation}
where for all $\mathbf{j}\in\ww N^{n}$, $F_{\mathbf{j}}$ is analytic
and bounded on $\cal U$, and defined \emph{via }the Cauchy formula:
\[
F_{\mathbf{j}}\left(x\right)=\frac{1}{\left(2i\pi\right)^{n}}\int_{\abs{z_{1}}=r'_{1}}\dots\int_{\abs{z_{n}}=r'_{n}}\frac{f\left(x,\mathbf{z}\right)}{\left(z_{1}\right)^{j_{1}+1}\dots\left(z_{n}\right)^{j_{n}+1}}\mbox{d}z_{n}\dots\mbox{d}z_{1}\qquad.
\]
Notice that the convergence of the series above is uniform on every
compact with respect to $x$ and $\mathbf{y}$.

In the same way, any formal power series $\hat{f}\left(x,\mathbf{y}\right)\in\form{x,\mathbf{y}}$
can be written as 
\[
{\displaystyle \hat{f}\left(x,\mathbf{y}\right)=\sum_{\mathbf{j}\in\ww N^{n}}\hat{F}_{\mathbf{j}}}\left(x\right)\mathbf{y}^{\mathbf{j}}\,\,.
\]

We present here a weaker notion of 1-summability that we will also
need.
\begin{defn}
~
\begin{itemize}
\item A function 
\[
{\displaystyle f\left(x,\mathbf{y}\right)=\sum_{\mathbf{j}\in\ww N^{n}}F_{\mathbf{j}}\left(x\right)\mathbf{y^{j}}}
\]
analytic and bounded in a domain $\sect r{\alpha}{\beta}\times\mathbf{D}\left(\mathbf{0},\mathbf{r}\right)$,
admits $\hat{f}$ as \textbf{weak Gevrey-1 asymptotic expansion} in
$x\in\sect r{\alpha}{\beta}$, if for all $\mathbf{j}\in\ww N^{n}$,
$F_{\mathbf{j}}$ admits $\hat{F}_{\mathbf{j}}$ as Gevrey-1 asymptotic
expansion in $\sect r{\alpha}{\beta}$.
\item The formal power series $\hat{f}$ is said to be \textbf{weakly 1-summable
in the direction $\theta\in\ww R$}, if the following conditions hold:

\begin{itemize}
\item for all $\mathbf{j}\in\ww N^{n}$, $\hat{F}_{\mathbf{j}}\left(x\right)\in\form x$
is 1-summable in the direction $\theta$, whose 1-sum in the direction
$\theta$ is denoted by $F_{\mathbf{j}}$;
\item the series ${\displaystyle f_{\theta}\left(x,\mathbf{y}\right):=\sum_{\mathbf{j}\in\ww N^{n}}F_{\mathbf{j}}\left(x\right)\mathbf{y^{j}}}$
defines a germ of a sectorial holomorphic function in a domain of
the form 
\[
S\left(r,\theta-\frac{\pi}{2}-\epsilon,\theta+\frac{\pi}{2}+\epsilon\right)\times\mathbf{D\left(0,r\right)}\,\,.
\]

\end{itemize}

In this case, $f_{\theta}\left(x,\mathbf{y}\right)$ is called \textbf{the
weak 1-sum of $\hat{f}$ in the direction $\theta$}.

\end{itemize}
\end{defn}
The following proposition is an analogue of Proposition \ref{prop: compositon summable}
for weak 1-summable formal power series, with the a stronger condition
instead of (\ref{eq: condition composition sommable}).
\begin{prop}
\label{prop: composition faible}Let 
\[
{\displaystyle \hat{\Phi}\left(x,\mathbf{z}\right)=\sum_{\mathbf{j}\in\ww N^{n}}\hat{\Phi}_{\mathbf{j}}\left(x\right)\mathbf{z^{j}}\in\form{x,\mathbf{y}}}
\]
and 
\[
{\displaystyle \hat{f}^{\left(k\right)}\left(x,\mathbf{y}\right)=\sum_{\mathbf{j}\in\ww N^{n}}\hat{F}_{\mathbf{j}}^{\left(k\right)}\left(x\right)\mathbf{y^{j}}\in\form{x,\mathbf{y}}}\,\,,
\]
for $k=1,\dots,n$, be n+1 formal power series which are weakly 1-summable
in directions $\theta$ and $\theta-\pi$, and let us denote by $\Phi_{+},f_{+}^{\left(1\right)},\dots,f_{+}^{\left(n\right)}$
$\big($\emph{resp. }$\Phi_{-},f_{-}^{\left(1\right)},\dots,f_{-}^{\left(n\right)}$$\big)$
their respective weak 1-sums in the direction $\theta$ (\emph{resp.}
$\theta-\pi$). Assume that $\hat{F}_{\mathbf{0}}^{\left(k\right)}=0$
for all $k=1,\dots,n$. Then, 
\[
\hat{\Psi}\left(x,\mathbf{y}\right):=\hat{\Phi}\left(x,\hat{f}^{\left(1\right)}\left(x,\mathbf{y}\right),\dots,\hat{f}^{\left(n\right)}\left(x,\mathbf{y}\right)\right)
\]
 is weakly 1-summable directions $\theta$ and $\theta-\pi$, and
its 1-sum in the corresponding direction is 
\[
\Psi_{\pm}\left(x,\mathbf{y}\right)=\Phi_{\pm}\left(x,f_{\pm}^{\left(1\right)}\left(x,\mathbf{y}\right),\dots,f_{\pm}^{\left(n\right)}\left(x,\mathbf{y}\right)\right)\,\,\,,
\]
which is a germ of a sectorial holomorphic function in the direction
$\theta$ (\emph{resp.} $\theta-\pi$) with opening $\pi$.\end{prop}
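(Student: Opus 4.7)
The plan is to reduce the statement to Lemma~\ref{lem:diff_alg_and_summability} (the algebra property of $1$-summable series) coefficient-by-coefficient in $\mathbf{y}$, and then glue the coefficients back into a holomorphic function on a sectorial domain via a convergence argument that crucially uses the hypothesis $\hat{F}_{\mathbf{0}}^{\left(k\right)}=0$.

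First I would identify the coefficients of $\hat{\Psi}$ in the $\mathbf{y}$-expansion. Write
\[
\hat{\Psi}\left(x,\mathbf{y}\right)=\sum_{\mathbf{j}\in\ww N^{n}}\hat{\Psi}_{\mathbf{j}}\left(x\right)\mathbf{y}^{\mathbf{j}}.
\]
Since by hypothesis $\hat{f}^{\left(k\right)}\left(x,\mathbf{0}\right)=\hat{F}^{\left(k\right)}_{\mathbf{0}}\left(x\right)=0$ for every $k$, the formal power series $\hat{f}^{\left(k\right)}\left(x,\mathbf{y}\right)$ has no constant term in $\mathbf{y}$. Hence each coefficient $\hat{\Psi}_{\mathbf{j}}\left(x\right)$ is a \emph{finite} polynomial combination of a finite number of $\hat{\Phi}_{\mathbf{k}}\left(x\right)$'s and $\hat{F}_{\mathbf{l}}^{\left(m\right)}\left(x\right)$'s. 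Applying Lemma~\ref{lem:diff_alg_and_summability}, each $\hat{\Psi}_{\mathbf{j}}$ is $1$-summable in directions $\theta$ and $\theta-\pi$, and its $1$-sum $\Psi_{\pm,\mathbf{j}}$ is obtained by substituting in the same polynomial expression the corresponding $1$-sums $\Phi_{\pm,\mathbf{k}}$ and $F_{\pm,\mathbf{l}}^{\left(m\right)}$.

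The remaining and central step is to show that the resulting series $\sum_{\mathbf{j}}\Psi_{\pm,\mathbf{j}}\left(x\right)\mathbf{y}^{\mathbf{j}}$ defines a genuine sectorial holomorphic germ on a domain of opening $\pi$ around $\theta$ (resp.~$\theta-\pi$), and coincides with the expected composition. For this I would work directly with the natural candidate
\[
G_{\pm}\left(x,\mathbf{y}\right):=\Phi_{\pm}\left(x,f_{\pm}^{\left(1\right)}\left(x,\mathbf{y}\right),\dots,f_{\pm}^{\left(n\right)}\left(x,\mathbf{y}\right)\right).
\]
Because $\hat{F}_{\mathbf{0}}^{\left(k\right)}=0$, the weak $1$-sum satisfies $f_{\pm}^{\left(k\right)}\left(x,\mathbf{0}\right)=0$; a continuity argument then shows that on any closed subsector of opening slightly less than $\pi$, and for $\mathbf{y}$ in a small enough polydisc, the point $\left(x,f_{\pm}^{\left(1\right)},\dots,f_{\pm}^{\left(n\right)}\right)$ lies in the domain where $\Phi_{\pm}$ is analytic and $\mathbf{z}\mapsto\Phi_{\pm}\left(x,\mathbf{z}\right)$ has a convergent Taylor expansion. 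Substituting this Taylor expansion and reordering yields the identity $G_{\pm}=\sum_{\mathbf{j}}\Psi_{\pm,\mathbf{j}}\mathbf{y}^{\mathbf{j}}$, after checking that the resulting double series converges absolutely, which again follows from the vanishing condition on the $\hat{F}_{\mathbf{0}}^{\left(k\right)}$ together with Cauchy-type estimates on the $F_{\pm,\mathbf{l}}^{\left(m\right)}$ provided by the definition of weak $1$-summability applied to $\hat{f}^{\left(k\right)}$.

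The main obstacle, as in the companion Proposition~\ref{prop: compositon summable}, is precisely this last convergence/domain analysis: one must verify that the sectorial domains of $\Phi_{\pm}$ and of the $f_{\pm}^{\left(k\right)}$ are compatible after composition and that the double series $\sum_{\mathbf{j}}\Psi_{\pm,\mathbf{j}}\left(x\right)\mathbf{y}^{\mathbf{j}}$ assembles to a function holomorphic in a full sector of opening $\pi$. The stronger assumption $\hat{F}_{\mathbf{0}}^{\left(k\right)}=0$ (instead of merely $\hat{f}^{\left(k\right)}\left(0,\mathbf{0}\right)=0$) is what makes this feasible in the weak setting, because it guarantees that shrinking $\mathbf{y}$ forces the inner arguments $f_{\pm}^{\left(k\right)}$ to be uniformly small independently of how small $x$ is inside the sector.
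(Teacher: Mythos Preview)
The paper does not actually give a proof of this proposition: it simply writes ``See \cite{bittmann2}''. Your outline is the natural argument and is essentially what one expects to find in the cited reference: use $\hat{F}_{\mathbf{0}}^{(k)}=0$ so that each $\mathbf{y}$-coefficient $\hat{\Psi}_{\mathbf{j}}(x)$ is a \emph{finite} polynomial in finitely many $\hat{\Phi}_{\mathbf{k}}(x)$ and $\hat{F}_{\mathbf{l}}^{(m)}(x)$, invoke the differential-algebra property of $1$-summability (Lemma~\ref{lem:diff_alg_and_summability}) on those one-variable series, and then check that the family of $1$-sums reassembles into a bounded holomorphic function on a common sectorial domain of opening $>\pi$, identified with the explicit composition $G_{\pm}$.

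One small remark: in the last step you should be slightly careful about the phrase ``on any closed subsector of opening slightly less than $\pi$''. The definition of weak $1$-summability requires the sum to live on a sector of opening strictly greater than $\pi$ (some $S(r,\theta-\pi/2-\epsilon,\theta+\pi/2+\epsilon)$). Your argument does give this, because the boundedness of $f_{\pm}^{(k)}$ and of $\Phi_{\pm}$ on such sectors is part of the hypothesis, and the vanishing $f_{\pm}^{(k)}(x,\mathbf{0})=0$ together with Cauchy estimates lets you shrink the $\mathbf{y}$-polydisc uniformly in $x$ over the full open sector, not only over closed subsectors. Just make sure your final domain is an element of $\cal{S}_{\theta,\pi}$ and not merely an asymptotic sector of opening $\pi$.
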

\begin{proof}
See \cite{bittmann2}.
\end{proof}
As proved in \cite{bittmann2}, the next corollary gives the link
between 1-summability in some direction and weak 1-summability in
the same direction (we refer to \cite{bittmann2}, Definition $\tx{2.8}$,
or to \cite{MR82}, section $\tx{IV}$, for a definition of the norm
$\norm{\cdot}_{\lambda,\theta,\delta,\rho}$ associate to the space
of 1-summable formal power series in the direction $\theta$).
\begin{cor}
Let 
\begin{eqnarray*}
{\displaystyle \hat{f}\left(x,\mathbf{y}\right)} & = & \sum_{\mathbf{j}\in\ww N^{n}}\hat{F}_{\mathbf{j}}\left(x\right)\mathbf{y^{j}}\in\form{x,\mathbf{y}}
\end{eqnarray*}
 be a formal power series. Then, $\hat{f}$ is 1-summable in the direction
$\theta\in\ww R$, of 1-sum ${\displaystyle f\in\cal O\left(\cal S_{\theta,\pi}\right)}$,
if and only if the following two conditions hold:
\begin{itemize}
\item $\hat{f}$ is weakly 1-summable in the direction $\theta$, \emph{i.e.}
there exists $\lambda,\delta,\rho$ such that $\forall\mathbf{j}\in\ww N^{n}$,
${\displaystyle \norm{\hat{F}_{\mathbf{j}}}_{\lambda,\theta,\delta,\rho}<\infty}$
\item the power series ${\displaystyle \sum_{\mathbf{j}\in\ww N^{n}}\norm{\hat{F}_{\mathbf{j}}}_{\lambda,\theta,\delta,\rho}\mathbf{y}^{\mathbf{j}}}$\emph{
}\textup{\emph{is convergent in some polydisc $\mathbf{D\left(0,r\right)}$.}}
\end{itemize}
\end{cor}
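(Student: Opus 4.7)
The plan is to interpret the norm $\norm{\cdot}_{\lambda,\theta,\delta,\rho}$ as a weighted supremum on the formal Borel transform in~$x$, and then to transfer the question from joint $1$-summability of $\hat f$ to a combination of coefficient-wise summability plus uniform normal convergence in~$\mathbf{y}$ of the Borel image, followed by Laplace inversion. Recall from \cite{MR82} that $\norm{\hat F}_{\lambda,\theta,\delta,\rho}<\infty$ is equivalent to the Borel transform $\cal B\hat F(\xi)$ extending analytically to the prescribed neighbourhood of the ray $\arg\xi=\theta$ with exponential growth bounded by $\norm{\hat F}_{\lambda,\theta,\delta,\rho}\,\tx e^{\abs\xi/\lambda}$, and the $1$-sum in direction~$\theta$ is then the Laplace transform $\int_{0}^{\tx e^{i\theta}\infty}\cal B\hat F(\xi)\,\tx e^{-\xi/x}\tx d\xi/x$.

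For the direct implication ($\Rightarrow$), I would suppose $\hat f$ is $1$-summable with sum $f\in\cal O(\germsect\theta\pi)$, bounded on some $\sect r{\theta-\pi/2-\epsilon}{\theta+\pi/2+\epsilon}\times\mathbf{D}(\mathbf{0},\mathbf{r})$. Extracting the $\mathbf{y}$-Taylor coefficients by Cauchy's formula as in $(\ref{eq: developpement selon y})$, each $F_{\mathbf{j}}$ inherits a Gevrey-$1$ asymptotic expansion $\hat F_{\mathbf{j}}$ on the same $x$-sector, hence is the $1$-sum of $\hat F_{\mathbf{j}}$ in direction~$\theta$. Applying the very same Cauchy integration in $\mathbf{y}$ to the joint Borel image $\cal B\hat f(\xi,\mathbf{z})$ recovers $\cal B\hat F_{\mathbf{j}}(\xi)$ fibre-wise, and combining the uniform exponential bound on $\cal B\hat f$ with Cauchy's inequality yields an estimate of the form $\norm{\hat F_{\mathbf{j}}}_{\lambda,\theta,\delta,\rho}\leq C\,r_1^{-j_1}\cdots r_n^{-j_n}$. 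This simultaneously delivers the finiteness condition and the convergence of $\sum\norm{\hat F_{\mathbf{j}}}_{\lambda,\theta,\delta,\rho}\mathbf{y}^{\mathbf{j}}$ on the polydisc.

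For the converse ($\Leftarrow$), assume both conditions hold, and let $F_{\mathbf{j}}$ be the $1$-sum of $\hat F_{\mathbf{j}}$ in direction~$\theta$, i.e. the Laplace transform of $\cal B\hat F_{\mathbf{j}}$. Polydisc convergence of $\sum\norm{\hat F_{\mathbf{j}}}_{\lambda,\theta,\delta,\rho}\mathbf{y}^{\mathbf{j}}$ ensures that $\phi(\xi,\mathbf{y}):=\sum\cal B\hat F_{\mathbf{j}}(\xi)\mathbf{y}^{\mathbf{j}}$ converges normally to a holomorphic function on the sectorial neighbourhood of the ray $\arg\xi=\theta$ times a polydisc $\mathbf{D}(\mathbf{0},\mathbf{r'})$, with a uniform bound $\abs{\phi(\xi,\mathbf{y})}\leq M\,\tx e^{\abs\xi/\lambda}$ on a smaller polydisc. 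Laplace-transforming in direction~$\theta$ and interchanging sum and integral (justified by normal convergence) produces a bounded analytic $f(x,\mathbf{y})=\sum F_{\mathbf{j}}(x)\mathbf{y}^{\mathbf{j}}$ on some $\cal S\in\germsect\theta\pi$. A uniform-in-$\mathbf{y}$ version of the standard Gevrey-$1$ estimate for Laplace transforms of functions with exponential type $1/\lambda$ then shows $f$ admits $\hat f$ as Gevrey-$1$ asymptotic expansion on $\cal S$, establishing $1$-summability.

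The main obstacle is precisely the passage, in both directions, from per-coefficient information on the $\mathbf{y}$-slices to uniform joint estimates in $(\xi,\mathbf{y})$: weak $1$-summability only gives a sectorial holomorphic function slice-wise, whereas genuine $1$-summability requires jointly holomorphic and exponentially controlled data on the Borel side. The summability of $\sum\norm{\hat F_{\mathbf{j}}}_{\lambda,\theta,\delta,\rho}\mathbf{y}^{\mathbf{j}}$ on a polydisc is exactly the quantitative hypothesis that glues the slice-wise Borel transforms $\cal B\hat F_{\mathbf{j}}$ into a genuine jointly holomorphic $\cal B\hat f(\xi,\mathbf{y})$ with the right joint exponential growth, which is the single non-trivial step separating the two notions.
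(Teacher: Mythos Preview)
The paper does not prove this corollary here: its entire proof reads ``See \cite{bittmann2}.'' So there is no argument in the present paper to compare against; you are effectively being measured against the standard Borel--Laplace characterisation that the cited reference carries out.

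Your outline is correct and is precisely that standard argument. The norm $\norm{\cdot}_{\lambda,\theta,\delta,\rho}$ is indeed a weighted sup-norm on the Borel transform (this is how it is set up in \cite{MR82}, section~IV, and in \cite{bittmann2}), so finiteness of $\norm{\hat F_{\mathbf j}}_{\lambda,\theta,\delta,\rho}$ is exactly Borel summability of $\hat F_{\mathbf j}$ with the stated exponential control, and convergence of the majorant series $\sum_{\mathbf j}\norm{\hat F_{\mathbf j}}_{\lambda,\theta,\delta,\rho}\,\mathbf y^{\mathbf j}$ is what lets the family $\cal B\hat F_{\mathbf j}$ assemble into a jointly holomorphic $\cal B\hat f(\xi,\mathbf y)$ with uniform exponential growth. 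Both directions then reduce to Cauchy's formula in $\mathbf y$ (to pass between joint and coefficient-wise data) together with the standard equivalence between Gevrey-$1$ asymptotics on a sector of opening larger than $\pi$ and Borel--Laplace summability, applied with $\mathbf y$ as a uniform parameter.

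One point worth stating explicitly in the forward direction: you need the parameters $\lambda,\delta,\rho$ to be common to all $\mathbf j$. This is guaranteed because the Gevrey-$1$ estimate in Definition~\ref{def:Gevrey-1_expansion} holds with constants $A,C$ independent of $\mathbf y$ on the whole polydisc; hence the joint Borel transform $\cal B\hat f(\xi,\mathbf y)$ extends to a \emph{fixed} sectorial neighbourhood of the ray with a single exponential type, and Cauchy's inequality in $\mathbf y$ then passes this uniformity down to each $\cal B\hat F_{\mathbf j}$. With that remark added, your sketch is complete.
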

\begin{proof}
See \cite{bittmann2}.
\end{proof}

\section{\label{sec: analytic classification}Proofs of the main theorems}

The aim of this section is to prove the main results of this paper,
assuming Proposition \ref{prop: isotropies plates} below holds.

\subsection{Analytic invariants: Stokes diffeomorphisms}

~

From now on, we fix a normal form 
\[
\ynorm=x^{2}\pp x+\left(-\lambda+a_{1}x-c\left(y_{1}y_{2}\right)\right)y_{1}\pp{y_{1}}+\left(\lambda+a_{2}x+c\left(y_{1}y_{2}\right)\right)y_{2}\pp{y_{2}}\,\,\,\,,
\]
with $\lambda\in\ww C^{*},$$\Re\left(a_{1}+a_{2}\right)>0$ and $c\in v\germ v$
vanishing at the origin. We denote by ${\displaystyle \cro{\ynorm}}$
the set of germs of holomorphic doubly-resonant saddle-nodes in $\left(\ww C^{3},0\right)$,
formally conjugate to $\ynorm$ by formal fibered diffeomorphisms
tangent to the identity. We refer the reader to Definition \ref{def: sectorial germs}
for notions relating to sectors.
\begin{defn}
~
\begin{itemize}
\item We define $\isotsect Y{\theta}{\eta}$, for all $\theta\in\ww R$
and $\eta\in\cro{0,2\pi}$, as the group of germs of sectorial fibered
isotropies of $\ynorm$ in sectorial domains in $\cal S_{\theta,\eta}$
(see Definition \ref{def: asymptotitc sector}), which are tangent
to the identity. 
\item We define $\Lambda_{\lambda}^{\left(\mbox{\ensuremath{\tx{weak}}}\right)}\left(\ynorm\right)$
$\left(\mbox{\emph{resp.} }\Lambda_{-\lambda}^{\left(\mbox{\ensuremath{\tx{weak}}}\right)}\left(\ynorm\right)\right)$
as the group of germs of sectorial fibered isotropies of $\ynorm$,
admitting the identity as \textbf{weak} Gevrey-1 asymptotic expansion
in sectorial domains of the form $S_{\lambda}\times\left(\ww C^{2},0\right)$
$\left(resp.\, S_{-\lambda}\times\left(\ww C^{2},0\right)\right)$,
where:
\begin{eqnarray*}
S_{\lambda} & \in & \cal{AS}_{\arg\left(\lambda\right),\pi}\\
S_{-\lambda} & \in & AS_{\arg\left(-\lambda\right),\pi}
\end{eqnarray*}
(see Definition \ref{def: asymptotitc sector}).
\end{itemize}
\end{defn}
We recall the notations given in the introduction: we have defined
$\Lambda_{\lambda}\left(\ynorm\right)$ $\left(\mbox{\emph{resp.} }\Lambda_{-\lambda}\left(\ynorm\right)\right)$
as the group of germs of sectorial fibered isotropies of $\ynorm$,
admitting the identity as Gevrey-1 asymptotic expansion in sectorial
domains of the form $S_{\lambda}\times\left(\ww C^{2},0\right)$ $\left(resp.\, S_{-\lambda}\times\left(\ww C^{2},0\right)\right)$.
It is clear that we have:
\[
\Lambda_{\pm\lambda}\left(\ynorm\right)\subset\Lambda_{\pm\lambda}^{\left(\mbox{\ensuremath{\tx{weak}}}\right)}\left(\ynorm\right)\subset\isotsect Y{\arg\left(\pm\lambda\right)}{\eta},\,\,\forall\eta\in\left]0,\pi\right[\,\,.
\]
 According to Theorem \ref{Th: Th drsn}, to any ${\displaystyle Y\in{\displaystyle \cro{\ynorm}}}$,
we can associate a pair of germs of sectorial fibered isotropies in
$S_{\lambda}\times\left(\ww C^{2},0\right)$ and $S_{-\lambda}\times\left(\ww C^{2},0\right)$
respectively, denoted by $\left(\Phi_{\lambda},\Phi_{-\lambda}\right)$:
\[
\begin{cases}
\Phi_{\lambda}:=\left(\Phi_{+}\circ\Phi_{-}^{-1}\right)_{\mid S_{\lambda}\times\left(\ww C^{2},0\right)}\in\isotsect Y{\arg\left(\lambda\right)}{\eta} & \,,\,\forall\eta\in\left]0,\pi\right[\\
\Phi_{-\lambda}:=\left(\Phi_{-}\circ\Phi_{+}^{-1}\right)_{\mid S_{-\lambda}\times\left(\ww C^{2},0\right)}\in\isotsect Y{\arg\left(-\lambda\right)}{\eta} & \,,\,\forall\eta\in\left]0,\pi\right[\,\,,
\end{cases}
\]
where $\left(\Phi_{+},\Phi_{-}\right)$ is the pair of the sectorial
normalizing maps given by Theorem \ref{Th: Th drsn}.
\begin{prop}
For any given $\eta\in\left]0,\pi\right[$ the map 
\begin{eqnarray*}
{\displaystyle {\displaystyle \cro{\ynorm}}} & \longrightarrow & \isotsect Y{\arg\left(\lambda\right)}{\eta}\times\isotsect Y{\arg\left(-\lambda\right)}{\eta}\\
Y & \longmapsto & \left(\Phi_{\lambda},\Phi_{-\lambda}\right)\,\,,
\end{eqnarray*}
actually ranges in $\Lambda_{\lambda}^{\left(\mbox{\ensuremath{\tx{weak}}}\right)}\left(\ynorm\right)\times\Lambda_{-\lambda}^{\left(\mbox{\ensuremath{\tx{weak}}}\right)}\left(\ynorm\right)$.\end{prop}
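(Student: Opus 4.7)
The plan is to combine the weak Gevrey-$1$ asymptotic expansion of the sectorial normalizing maps $\Phi_{+}$ and $\Phi_{-}$ (provided by Theorem~\ref{Th: Th drsn}(3)) with a term-by-term analysis on the overlap $S_{\lambda} \times \left(\ww C^{2},0\right)$. By symmetry, it suffices to treat $\Phi_{\lambda}$, the argument for $\Phi_{-\lambda}$ being identical up to exchanging the roles of $S_{+}$ and $S_{-}$. From its very construction, $\Phi_{\lambda}$ is a fibered sectorial isotropy of $\ynorm$, tangent to the identity, analytic on a sectorial domain in $\asympsect{\arg(\lambda)}{\pi}$; the only point requiring proof is that it admits the identity as weak Gevrey-$1$ asymptotic expansion.

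The key observation is that $\Phi_{+}$ and $\Phi_{-}$ admit the same formal power series $\hat\Phi$ as weak Gevrey-$1$ asymptotic expansion on $S_{+} \times \left(\ww C^{2},0\right)$ and $S_{-} \times \left(\ww C^{2},0\right)$ respectively. Consequently, on the common overlap $S_{\lambda}$, their difference $\Phi_{+} - \Phi_{-}$ (understood componentwise on the $\mathbf{y}$-coordinates, the first component being identically $x$) admits zero as weak Gevrey-$1$ asymptotic expansion, that is, each of its $\mathbf{y}$-coefficients is Gevrey-$1$ flat in $x$. Since composing with $\Phi_{-}^{-1}$ from the right yields the simple algebraic identity
\[
\Phi_\lambda - \tx{Id} = (\Phi_+ - \Phi_-) \circ \Phi_-^{-1}~,
\]
the question reduces to showing that composing a weak Gevrey-$1$ flat function with the tangent-to-identity sectorial map $\Phi_{-}^{-1}$ preserves weak Gevrey-$1$ flatness. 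Note that $\Phi_{-}^{-1}$ itself admits $\hat\Phi^{-1}$ as weak Gevrey-$1$ expansion, by standard inversion of formal diffeomorphisms tangent to the identity combined with the inverse function theorem applied componentwise.

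I would carry out this composition step coefficient-by-coefficient in $\mathbf{y}$. Expanding $\Phi_{+} - \Phi_{-}$ in powers of its second argument and substituting $\Phi_{-}^{-1}$, the coefficient of $\mathbf{y}^{\mathbf{j}}$ in the composition is a sum over $\mathbf{k}\in\ww N^{2}$ of products $R_{\mathbf{k}}(x) \cdot C_{\mathbf{k},\mathbf{j}}(x)$, with $R_{\mathbf{k}}(x)$ Gevrey-$1$ flat in $x$ (extracted from $\Phi_{+} - \Phi_{-}$) and $C_{\mathbf{k},\mathbf{j}}(x)$ a bounded holomorphic coefficient extracted from a power of $\Phi_{-}^{-1}$. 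Each such product is Gevrey-$1$ flat. Uniform Cauchy estimates in $\mathbf{y}$ on a fixed polydisc applied to $\Phi_{\pm}$ furnish a $1/r^{|\mathbf{k}|}$ decay of $R_{\mathbf{k}}$, while $\Phi_{-}^{-1}$ remains bounded on the same polydisc, ensuring absolute and uniform convergence of the series on any proper sub-sector of $S_{\lambda}$. The limit therefore inherits the Gevrey-$1$ flatness of its terms.

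The main obstacle is precisely this composition step: Proposition~\ref{prop: composition faible} does not apply directly because $\Phi_{-}^{-1}$ has a non-vanishing constant-in-$\mathbf{y}$ part (tangent-to-identity implies $\Phi_{-}^{-1}(x,0) = \tx O(x^{2})$, not identically zero). The workaround relies on the fact that the outer factor carries the Gevrey-$1$ smallness while the inner factor stays bounded, so a direct Cauchy product estimate suffices to control the infinite sum and transmit the Gevrey-$1$ flat character. Once this technical point is settled, the $\mathbf{y}$-coefficients of $\Phi_{\lambda} - \tx{Id}$ are all Gevrey-$1$ flat, which is exactly the statement that $\Phi_{\lambda}$ admits $\tx{Id}$ as weak Gevrey-$1$ asymptotic expansion on $S_{\lambda} \times \left(\ww C^{2},0\right)$, i.e., $\Phi_{\lambda}\in\Lambda_{\lambda}^{(\tx{weak})}(\ynorm)$.
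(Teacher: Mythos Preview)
Your strategy coincides with the paper's: both argue that since $\Phi_{+}$ and $\Phi_{-}$ admit the same formal series $\hat\Phi$ as weak Gevrey-$1$ asymptotic expansion on the overlap, the composition $\Phi_{+}\circ\Phi_{-}^{-1}$ admits $\hat\Phi\circ\hat\Phi^{-1}=\tx{Id}$ as weak Gevrey-$1$ asymptotic expansion. The paper simply invokes Proposition~\ref{prop: composition faible} for this step; you are more scrupulous and notice that its hypothesis $\hat F_{\mathbf 0}^{(k)}=0$ is not literally satisfied, then attempt a direct argument.

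Your observation about the hypothesis is correct, but your workaround has a gap in the final step. You write that ``the limit therefore inherits the Gevrey-$1$ flatness of its terms''. This does not follow from uniform convergence alone: if $R_{\mathbf k}$ is Gevrey-$1$ flat with constants $A_{\mathbf k},B_{\mathbf k}$ so that $|R_{\mathbf k}(x)|\le A_{\mathbf k}e^{-B_{\mathbf k}/|x|}$, the sum $\sum_{\mathbf k}R_{\mathbf k}(x)C_{\mathbf k,\mathbf j}(x)$ need not be exponentially small unless the rates $B_{\mathbf k}$ are bounded below uniformly in $\mathbf k$. Your Cauchy estimate $|R_{\mathbf k}(x)|\le M/r^{|\mathbf k|}$ only provides a uniform bound, not a uniform exponential decay in $x$; combining it with term-wise flatness does not yield flatness of the infinite sum.

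A cleaner way to close the argument---and presumably what the paper has in mind---is to observe that the hypothesis of Proposition~\ref{prop: composition faible} can be relaxed to $\hat F_{\mathbf 0}^{(k)}(0)=0$: absorb the constant-in-$\mathbf y$ part into the outer function by setting $\hat\Phi'(x,\mathbf z):=\hat\Phi\big(x,z_{1}+\hat F_{\mathbf 0}^{(1)}(x),z_{2}+\hat F_{\mathbf 0}^{(2)}(x)\big)$, so that the inner functions $\tilde f^{(k)}:=\hat f^{(k)}-\hat F_{\mathbf 0}^{(k)}$ have zero constant term. Since each $\hat F_{\mathbf 0}^{(k)}$ is itself $1$-summable (it is a coefficient of a weakly $1$-summable series) and vanishes at $0$, the shifted $\hat\Phi'$ remains weakly $1$-summable, and Proposition~\ref{prop: composition faible} applies verbatim. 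This gives the desired conclusion without tracking Gevrey constants through an infinite sum.
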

\begin{proof}
The fact that the sectorial normalizing maps $\Phi_{+},\Phi_{-}$
given by Theorem \ref{Th: Th drsn} both conjugate ${\displaystyle Y\in{\displaystyle \cro{\ynorm}}}$
to $\ynorm$ in the corresponding sectorial domains proves that the
arrow above is well-defined, with values in $\isotsect Y{\arg\left(\lambda\right)}{\eta}\times\isotsect Y{\arg\left(-\lambda\right)}{\eta}$,
for all $\eta\in\left]0,\pi\right[$. The fact that $\Phi_{\pm\lambda}$
admits the identity as weak Gevrey-1 asymptotic expansion in $S_{\pm\lambda}\times\left(\ww C^{2},0\right)$
comes from the fact that $\Phi_{+}$ and $\Phi_{-}$ admits the same
weak Gevrey-1 asymptotic expansion in $S_{\lambda}\times\left(\ww C^{2},0\right)$
and $S_{-\lambda}\times\left(\ww C^{2},0\right)$, and from Proposition
\ref{prop: composition faible}.
\end{proof}
The subgroup $\fdiff[\ww C^{3},0,\tx{Id}]\subset\fdiff$ formed by
fibered diffeomorphisms tangent to the identity acts naturally on
${\displaystyle {\displaystyle {\displaystyle \cro{\ynorm}}}}$ by
conjugacy. Now we show that the uniqueness of germs of sectorial normalizing
maps $\left(\Phi_{+},\Phi_{-}\right)$ implies that the Stokes diffeomorphisms
$\left(\Phi_{\lambda},\Phi_{-\lambda}\right)$ of a vector field ${\displaystyle Y\in{\displaystyle {\displaystyle \cro{\ynorm}}}}$
is invariant under the action of $\fdiff[\ww C^{3},0,\tx{Id}]$. Furthermore,
this map is one-to-one.
\begin{prop}
\label{prop: espace de module injection}The map 
\begin{eqnarray*}
{\displaystyle {\displaystyle {\displaystyle \cro{\ynorm}}}} & \longrightarrow & \Lambda_{\lambda}^{\left(\mbox{\ensuremath{\tx{weak}}}\right)}\left(\ynorm\right)\times\Lambda_{-\lambda}^{\left(\mbox{\ensuremath{\tx{weak}}}\right)}\left(\ynorm\right)\\
Y & \longmapsto & \left(\Phi_{\lambda},\Phi_{-\lambda}\right)
\end{eqnarray*}
 factorizes through a one-to-one map
\begin{eqnarray*}
\quotient{{\displaystyle {\displaystyle \cro{\ynorm}}}}{\fdiff[\ww C^{3},0,\tx{Id}]} & \longrightarrow & \Lambda_{\lambda}^{\left(\mbox{\ensuremath{\tx{weak}}}\right)}\left(\ynorm\right)\times\Lambda_{-\lambda}^{\left(\mbox{\ensuremath{\tx{weak}}}\right)}\left(\ynorm\right)\\
Y & \longmapsto & \left(\Phi_{\lambda},\Phi_{-\lambda}\right)\,\,.
\end{eqnarray*}
\end{prop}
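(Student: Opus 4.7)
The plan is to verify the two separate assertions encoded in the statement: that the map factors through the quotient by the action of $\fdiff[\ww{C}^{3},0,\tx{Id}]$, and that the induced map is one-to-one. Both arguments rest decisively on the uniqueness of sectorial normalizations provided by Proposition~\ref{prop: unique normalizations}.

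For well-definedness on the quotient, I would pick $Y, Y' \in \cro{\ynorm}$ related by $Y' = \Psi_{*}(Y)$ for some $\Psi \in \fdiff[\ww{C}^{3},0,\tx{Id}]$, and show that their Stokes pairs coincide. Let $\Phi_{\pm}$ denote the sectorial normalizing maps of $Y$. The composition $\Phi_{\pm} \circ \Psi^{-1}$ is still fibered, tangent to the identity, and conjugates $Y'$ to $\ynorm$; by Proposition~\ref{prop: unique normalizations} it must therefore equal the sectorial normalizing map $\Phi'_{\pm}$ of $Y'$. A direct cancellation then yields $\Phi'_{\lambda} = \Phi_{+}\circ\Phi_{-}^{-1} = \Phi_{\lambda}$, and symmetrically $\Phi'_{-\lambda} = \Phi_{-\lambda}$.

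For injectivity, I would assume that $Y, Y' \in \cro{\ynorm}$ share the same Stokes pair $(\Phi_{\lambda}, \Phi_{-\lambda})$ and denote their sectorial normalizing maps by $\Phi_{\pm}$ and $\Phi'_{\pm}$ respectively. Introduce the sectorial maps $\Psi_{\pm} := (\Phi'_{\pm})^{-1}\circ \Phi_{\pm}$ on $S_{\pm}\times\left(\ww{C}^{2},0\right)$; each is fibered, tangent to the identity, and conjugates $Y$ to $Y'$. The key observation is that, thanks to the hypothesis on Stokes pairs, on the narrow domain $S_{\lambda}\times\left(\ww{C}^{2},0\right)$ one has
\[
\Psi_{+}\circ\Psi_{-}^{-1} \;=\; (\Phi'_{+})^{-1}\circ\Phi_{\lambda}\circ\Phi'_{-} \;=\; (\Phi'_{+})^{-1}\circ\Phi'_{+}\circ(\Phi'_{-})^{-1}\circ\Phi'_{-} \;=\; \tx{Id},
\]
and the analogous identity holds on $S_{-\lambda}\times\left(\ww{C}^{2},0\right)$. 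Hence $\Psi_{+}$ and $\Psi_{-}$ agree on the overlap and glue to a single fibered analytic map $\Psi$ defined on $(S_{+}\cup S_{-})\times\left(\ww{C}^{2},0\right)$, i.e.\ on a punctured neighborhood of $\acc{x=0}$.

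The main remaining point---and the sole genuinely analytic one---is to extend $\Psi$ across the hypersurface $\acc{x=0}$. Because $\Psi_{+}$ and $\Psi_{-}$ are asymptotically tangent to the identity in their respective wide sectors, $\Psi$ is bounded in a neighborhood of the divisor, so Riemann's theorem on removable singularities produces a holomorphic extension to a full neighborhood of the origin. This extension is automatically fibered and tangent to the identity (both being closed conditions transferred from the sectorial pieces), and the inverse function theorem then guarantees that it is a genuine local biholomorphism, hence an element of $\fdiff[\ww{C}^{3},0,\tx{Id}]$ that conjugates $Y$ to $Y'$. Apart from this removable-singularity step, the argument is pure formal manipulation of the defining identities.
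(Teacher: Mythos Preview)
Your proof is correct and follows essentially the same route as the paper's own argument: uniqueness of the sectorial normalizing maps gives well-definedness on the quotient by a direct cancellation, and for injectivity the compositions $(\Phi'_{\pm})^{-1}\circ\Phi_{\pm}$ are shown to agree on the overlaps and are then extended across $\{x=0\}$ via Riemann's removable singularity theorem. The only cosmetic difference is that you phrase the overlap check as $\Psi_{+}\circ\Psi_{-}^{-1}=\tx{Id}$ rather than directly as $\Psi_{+}=\Psi_{-}$, which is of course equivalent.
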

\begin{rem}
This very result means that the Stokes diffeomorphisms encode completely
the class of $Y$ in the quotient $\quotient{\left[\ynorm\right]}{\fdiff[\ww C^{3},0,\tx{Id}]}$
as they separate conjugacy classes.\end{rem}
\begin{proof}
First of all, let us prove that the latter map is well-defined. Let
${\displaystyle Y,\tilde{Y}\in{\displaystyle {\displaystyle \cro{\ynorm}}}}$
and $\Theta\in\fdiff[\ww C^{3},0,\tx{Id}]$ be such that $\Theta_{*}\left(Y\right)=\tilde{Y}$.
We denote by $\Phi_{\pm}$ (\emph{resp.} $\tilde{\Phi}{}_{\pm}$)
the sectorial normalizing maps of $Y$ (\emph{resp. $\tilde{Y}$}),
and $\left(\Phi_{\lambda},\Phi_{-\lambda}\right)$ $\Big($\emph{resp.}
$\left(\tilde{\Phi}_{\lambda},\tilde{\Phi}_{-\lambda}\right)$$\Big)$
the Stokes diffeomorphisms of $Y$ (\emph{resp.} $\tilde{Y}$). By
assumption, $\tilde{\Phi}_{\pm}\circ\Theta$ is also a germ of a sectorial
fibered normalization of $Y$ in $S_{\pm}\times\left(\ww C^{2},0\right)$,
which is tangent to the identity. Thus, according to the uniqueness
statement in Theorem \ref{Th: Th drsn}: 
\[
\Phi_{\pm}=\tilde{\Phi}_{\pm}\circ\Theta\,\,.
\]
Consequently, in $S_{\pm\lambda}\times\left(\ww C^{2},0\right)$ we
have 
\begin{eqnarray*}
\Phi_{\lambda} & = & \left(\Phi_{+}\circ\Phi_{-}^{-1}\right)_{\mid S_{\lambda}\times\left(\ww C^{2},0\right)}\\
 & = & \tilde{\Phi}_{+}\circ\Theta\circ\Theta^{-1}\circ\tilde{\Phi}_{-}\\
 & = & \tilde{\Phi}_{\lambda}\,\,,
\end{eqnarray*}
and similarly
\begin{eqnarray*}
\Phi_{-\lambda} & = & \left(\Phi_{-}\circ\Phi_{+}^{-1}\right)_{\mid S_{-\lambda}\times\left(\ww C^{2},0\right)}\\
 & = & \tilde{\Phi}_{-}\circ\Theta\circ\Theta^{-1}\circ\left(\tilde{\Phi}_{+}\right)^{-1}\\
 & = & \tilde{\Phi}_{-\lambda}\,\,.
\end{eqnarray*}

Let us prove that the map is one-to-one. Let $Y,\tilde{Y}\in\cro{\ynorm}$
share the same Stokes diffeomorphisms $\left(\Phi_{\lambda},\Phi_{-\lambda}\right)$.
We denote by $\Phi_{\pm}$ (\emph{resp.} $\tilde{\Phi}{}_{\pm}$)
the germ of a sectorial fibered normalizing map of $Y$ (\emph{resp.
$\tilde{Y}$}) $S_{\pm}\times\left(\ww C^{2},0\right)$. We have:
\[
{\displaystyle \begin{cases}
\Phi_{+}\circ\left(\Phi_{-}\right)^{-1}=\Phi_{\lambda}=\tilde{\Phi}_{+}\circ\left(\tilde{\Phi}_{-}\right)^{-1} & \mbox{ in }S_{\lambda}\times\left(\ww C^{2},0\right)\\
\Phi_{-}\circ\left(\Phi_{+}\right)^{-1}=\Phi_{-\lambda}=\tilde{\Phi}_{-}\circ\left(\tilde{\Phi}_{+}\right)^{-1} & \mbox{ in }S_{-\lambda}\times\left(\ww C^{2},0\right)\,.
\end{cases}}
\]
Thus: 
\[
{\displaystyle \begin{cases}
\left(\tilde{\Phi}_{+}\right)^{-1}\text{\ensuremath{\circ}}\,\Phi_{+}=\left(\tilde{\Phi}_{-}\right)^{-1}\text{\ensuremath{\circ}}\,\Phi_{-} & \mbox{ in }S_{\lambda}\times\left(\ww C^{2},0\right)\\
\left(\tilde{\Phi}_{+}\right)^{-1}\text{\ensuremath{\circ}}\,\Phi_{+}=\left(\tilde{\Phi}_{-}\right)^{-1}\text{\ensuremath{\circ}}\,\Phi_{-} & \mbox{ in }S_{-\lambda}\times\left(\ww C^{2},0\right)\,.
\end{cases}}
\]
We can then define a map $\varphi$ analytic in a domain of the form
$\left(D\left(0,r\right)\backslash\acc 0\right)\times\mathbf{D\left(0,r\right)}$
by setting:
\[
{\displaystyle \begin{cases}
\varphi_{\mid S_{+}}=\left(\tilde{\Phi}_{+}\right)^{-1}\text{\ensuremath{\circ}}\,\Phi_{+} & \mbox{ in }S_{+}\\
\varphi_{\mid S_{-}}=\left(\tilde{\Phi}_{-}\right)^{-1}\text{\ensuremath{\circ}}\,\Phi_{-} & \mbox{ in }S_{-}\,.
\end{cases}}
\]
This map is analytic and bounded in $\left(D\left(0,r\right)\backslash\acc 0\right)\times\mathbf{D\left(0,r\right)}$,
and the Riemann singularity theorem tells us that this map can be
analytically extended to the entire poly-disc $D\left(0,r\right)\times\mathbf{D\left(0,r\right)}$.
As a conclusion, $\varphi\in\fdiff[\ww C^{3},0,\tx{Id}]$, $\Phi_{\pm}=\tilde{\Phi}_{\pm}\circ\varphi$
and $\varphi_{*}\left(Y\right)=\tilde{Y}$.
\end{proof}

\subsection{Proof of Theorem \ref{Th: Th drsn-1}: 1-summability of the formal
normalization}

~

We fix a normal form 
\[
\ynorm=x^{2}\pp x+\left(-\lambda+a_{1}x-c\left(y_{1}y_{2}\right)\right)y_{1}\pp{y_{1}}+\left(\lambda+a_{2}x+c\left(y_{1}y_{2}\right)\right)y_{2}\pp{y_{2}}\,\,\,\,,
\]
with $\lambda\in\ww C^{*},$$\Re\left(a_{1}+a_{2}\right)>0$ and $c\in v\germ v$
vanishing at the origin. In section \ref{sec:Sectorial-isotropies-and}
we will prove the following result.
\begin{prop}
\label{prop: isotropies plates}Any $\psi\in\Lambda_{\pm\lambda}^{\left(\tx{weak}\right)}\left(\ynorm\right)$
admits the identity as Gevrey-1 asymptotic expansion in $S_{\pm\lambda}\times\left(\ww C^{2},0\right)$.
In other words:
\[
\Lambda_{\pm\lambda}^{\left(\tx{weak}\right)}\left(\ynorm\right)=\Lambda_{\pm\lambda}\left(\ynorm\right)\,\,.
\]

\end{prop}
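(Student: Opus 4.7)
The strategy is to analyze a given $\psi \in \Lambda_{\pm\lambda}^{(\tx{weak})}(\ynorm)$ via its induced action on the space of leaves of $\ynorm$ over the narrow sectors $S_{\pm\lambda}$, following the approach announced in Remark~\ref{rem:New difficulties}. The first step is to produce explicit first integrals. Along any trajectory of $\ynorm$ one computes $y_1 y_2 = K\, x^{a_1+a_2}$, so $c(y_1 y_2) = c(K\, x^{a_1+a_2})$, and integrating the resulting decoupled equations for $y_1$ and $y_2$ yields sectorial first integrals of the form
\[
u_i := y_i\, x^{-a_i} \exp\!\left((-1)^{i}\frac{\lambda}{x}\right) E_i(x, y_1 y_2), \qquad i = 1, 2,
\]
where $E_i$ carries, in each resonant case $m(a_1+a_2) = 1$ for some positive integer $m$, a logarithmic contribution $\exp(c_m(y_1 y_2)^m \log(x)/x)$. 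The pair $(u_1, u_2)$ provides coordinates on the space of leaves over each narrow sector.

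Second, since $\psi$ commutes with $\ynorm$, it descends to a biholomorphism $(u_1, u_2) \mapsto (H_1(u_1,u_2), H_2(u_1,u_2))$ of the leaf space. In $S_\lambda$ we have $\Re(\lambda/x) > 0$, so $|e^{\lambda/x}| \to \infty$ and $|e^{-\lambda/x}| \to 0$ as $x \to 0$: the coordinate $u_1$ is exponentially small on the physical polydisc $\mathbf{y} \in \mathbf{D(0,r)}$ while $u_2$ is exponentially large there. Requiring that $\psi$ be holomorphic and tangent to the identity on the \emph{full} polydisc, uniformly down to $x \to 0$, severely restricts $(H_1, H_2)$; a careful analysis should show that $\psi - \tx{Id}$ is forced to be uniformly exponentially flat, with bounds of the form $\lesssim \exp(-\delta|\lambda|/|x|)$ for some $\delta > 0$, throughout $S_\lambda$ (and symmetrically in $S_{-\lambda}$). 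This is the analogue, in our setting, of the classical description of Stokes isotropies in the style of Martinet--Ramis.

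Finally, combining this exponential flatness with the weak Gevrey-1 hypothesis delivers the conclusion. Weak Gevrey-1 asymptotic to the identity supplies Gevrey-1 bounds on each Taylor coefficient $\hat F_{\mathbf{j}}(x)$ of $\psi - \tx{Id}$ in the $\mathbf{y}$ expansion, but not uniformly in $\mathbf{j}$; the flatness obtained in step two upgrades these to uniform bounds $\|\hat F_{\mathbf{j}}\|_{\lambda,\theta,\delta,\rho}$ whose generating series in $\mathbf{y}$ converges on a fixed polydisc. By the last corollary of Section~2, this is precisely the strong Gevrey-1 property required, giving the equality $\Lambda_{\pm\lambda}^{(\tx{weak})}(\ynorm) = \Lambda_{\pm\lambda}(\ynorm)$. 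I expect the main obstacle to lie squarely in step two: extracting the uniform exponential flatness of $\psi - \tx{Id}$ in the presence of the coupling introduced by $c_m(y_1 y_2)^m \log(x)/x$, which, as emphasized in Remark~\ref{rem:New difficulties}, destroys the degree-by-degree decoupling available in Stolovitch's linear setting and forces one to handle the leaf-space automorphism $(H_1, H_2)$ as a whole rather than order-by-order in $\mathbf{y}$.
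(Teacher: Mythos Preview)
Your outline matches the paper's strategy in broad strokes: pass to the space of leaves via explicit sectorial first integrals, analyze the induced leaf-space automorphism, and deduce exponential flatness of $\psi-\tx{Id}$. However, as written this is a plan rather than a proof, and there is a genuine misplacement of the key hypothesis that would make the argument, as you have organized it, fail.

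You locate the use of the weak Gevrey-1 hypothesis in step three, as a final ``upgrade'' applied after exponential flatness has already been obtained. This is backwards. In the paper's proof, the weak Gevrey-1 assumption is used \emph{inside} the leaf-space analysis (your step two), and it is indispensable there. The leaf-space automorphism admits, for each fixed value of the resonant monomial $w=h_1h_2$, a Laurent expansion in $h_1$ (in $S_\lambda$; in $h_2$ for $S_{-\lambda}$). Cauchy estimates on the annuli $\Omega_{x,w}$, combined with the asymptotics of $f_1,f_2$ as $x\to 0$, kill the strictly negative-index terms and bound the positive-index ones. But the index-zero term of $\Psi_{w,\lambda}$ (and the index-$1$ term of $\Psi_{1,\lambda}$, the index-$(-1)$ term of $\Psi_{2,\lambda}$) survives as an \emph{a priori} arbitrary entire function of $w$. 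It is precisely the weak Gevrey-1 hypothesis---that each $\mathbf{y}$-Taylor coefficient of $\psi-\tx{Id}$ is Gevrey-1 flat in $x$---that forces these leading entire functions to be trivial ($\Psi_{w,\lambda,0}(w)=w$, $\Psi_{1,\lambda,1}(w)=1$, $\Psi_{2,\lambda,-1}(w)=w$); see Lemmas~\ref{lem: istropie monomiale} and~\ref{lem: isotropies exp plates}. Without this identification you cannot conclude flatness: a leaf-space map with $\Psi_{w,\lambda,0}(w)=w+g(w)$ for nontrivial entire $g$ would give a sectorial isotropy tangent to the identity whose difference from $\tx{Id}$ is bounded but \emph{not} exponentially small.

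Once the leading terms are pinned down this way, the remaining series in the leaf space pulls back to something uniformly $\lesssim\exp(-B/|x|)$ on closed sub-polydiscs (this is the estimate at the end of Lemmas~\ref{lem: istropie monomiale} and~\ref{lem: isotropies exp plates}), which \emph{directly} is the strong Gevrey-1 asymptotic to the identity. No separate step three is needed: uniform exponential flatness already implies strong Gevrey-1, so invoking the last corollary of Section~2 is superfluous. In short, your step two cannot be completed as stated without absorbing the content of your step three into it, and the actual work---the Laurent/Cauchy analysis on the $x$-dependent annuli and the identification of the leading coefficients---is the substance of the proof that your sketch omits.
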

As a first consequence of Proposition \ref{prop: isotropies plates},
we obtain Theorem \ref{Th: Th drsn-1} which states that the formal
normalizing map from \cite{bittmann1} is in fact 1-summable.
\begin{proof}[Proof of Theorem \ref{Th: Th drsn-1}.]
\emph{}

Let us consider the unique germs of a sectorial normalizing map $\Phi_{+}$
and $\Phi_{-}$ in $S_{+}\times\left(\ww C^{2},0\right)$ and $S_{-}\times\left(\ww C^{2},0\right)$
respectively, and their associated Stokes diffeomorphisms:
\[
\begin{cases}
\Phi_{\lambda}=\left(\Phi_{+}\circ\Phi_{-}^{-1}\right)_{\mid S_{\lambda}\times\left(\ww C^{2},0\right)}\in\Lambda_{\lambda}^{\left(\tx{weak}\right)}\left(\ynorm\right)\\
\Phi_{-\lambda}=\left(\Phi_{-}\circ\Phi_{+}^{-1}\right)_{\mid S_{-\lambda}\times\left(\ww C^{2},0\right)}\in\Lambda_{-\lambda}^{\left(\tx{weak}\right)}\left(\ynorm\right) & \,\,\,.
\end{cases}
\]
According to Proposition \ref{prop: isotropies plates}, 
\[
\Lambda_{\pm\lambda}^{\left(\tx{weak}\right)}\left(\ynorm\right)=\Lambda_{\pm\lambda}\left(\ynorm\right)\,\,,
\]
so that $\Phi_{\lambda}$ and $\Phi_{-\lambda}$ both admit the identity
as Gevrey-1 asymptotic expansion, in $S_{\lambda}\times\left(\ww C^{2},0\right)$
and $S_{-\lambda}\times\left(\ww C^{2},0\right)$ respectively. Then,
Theorem \ref{th: martinet ramis} gives the existence of 
\[
\left(\phi_{+},\phi_{-}\right)\in\diffsect[\arg\left(i\lambda\right)][\eta]\times\diffsect[\arg\left(-i\lambda\right)][\eta]
\]
 for all $\eta\in\left]\pi,2\pi\right[$, such that:{\footnotesize{}
\[
\begin{cases}
\phi_{+}\circ\left(\phi_{-}\right)_{\mid S_{\lambda}\times\left(\ww C^{2},0\right)}^{-1}=\Phi_{\lambda}\\
\phi_{-}\circ\left(\phi_{+}\right)_{\mid S_{-\lambda}\times\left(\ww C^{2},0\right)}^{-1}=\Phi_{-\lambda} & \,,
\end{cases}
\]
}and the existence of a formal diffeomorphism $\hat{\phi}$ which
is tangent to the identity, such that $\phi_{+}$ and $\phi_{-}$
both admit $\hat{\phi}$ as Gevrey-1 asymptotic expansion in $S_{+}\times\left(\ww C^{2},0\right)$
and $S_{-}\times\left(\ww C^{2},0\right)$ respectively. In particular,
we have:
\[
\left(\left(\Phi_{+}\right)^{-1}\circ\phi_{+}\right)_{\substack{\mid\left(S_{\lambda}\cup S_{-\lambda}\right)\times\left(\ww C^{2},0\right)}
}=\left(\left(\Phi_{-}\right)^{-1}\circ\phi_{-}\right)_{\substack{\mid\left(S_{\lambda}\cup S_{-\lambda}\right)\times\left(\ww C^{2},0\right)}
}\,\,.
\]
This proves that the function $\Phi$ defined by $\left(\Phi_{+}\right)^{-1}\circ\phi_{+}$
in $S_{+}\times\left(\ww C^{2},0\right)$ and by $\left(\Phi_{-}\right)^{-1}\circ\phi_{-}$
in $S_{-}\times\left(\ww C^{2},0\right)$ is well-defined and analytic
in $D\left(0,r\right)\backslash\acc 0\times\mathbf{D\left(0,r\right)}$.
Since it is also bounded, it can be extended to an analytic map $\Phi$
in $D\left(0,r\right)\times\mathbf{D\left(0,r\right)}$ by Riemann's
theorem. Hence: 
\[
\begin{cases}
\phi_{+}=\Phi_{+}\circ\Phi\\
\phi_{-}=\Phi_{-}\circ\Phi & \,\,.
\end{cases}
\]
In particular, by composition, $\Phi_{+}$ and $\Phi_{-}$ both admit
$\hat{\phi}\circ\Phi^{-1}$ as Gevrey-1 asymptotic expansion in $S_{\lambda}\times\left(\ww C^{2},0\right)$
and $S_{-\lambda}\times\left(\ww C^{2},0\right)$ respectively. Since
$\Phi_{+}$ and $\Phi_{-}$ conjugates $Y$ to $\ynorm$ and since
the notion of asymptotic expansion commutes with the partial derivative
operators, the formal diffeomorphism $\hat{\phi}\circ\Phi^{-1}$ formally
conjugates $Y$ to $\ynorm$. Finally, notice that $\hat{\phi}\circ\Phi^{-1}$
is necessarily tangent to the identity. Hence, by uniqueness of the
formal normalizing map given by Theorem \ref{thm: forme normalel formelle},
we deduce that $\hat{\phi}\circ\Phi^{-1}=\hat{\Phi}$, the unique
formal normalizing map tangent to the identity. 
\end{proof}

\subsection{Proofs of Theorems \ref{thm: espace de module} and \ref{th: espace de module symplectic}}

~

Let us now present the proofs of Theorems \ref{thm: espace de module}
and \ref{th: espace de module symplectic}, assuming Proposition \ref{prop: isotropies plates}.

\subsubsection{Proof of Theorem \ref{thm: espace de module}}

~
\begin{proof}[Proof of Theorem \ref{thm: espace de module}.]
\emph{}
\end{proof}
Propositions \ref{prop: espace de module injection}, together with
Proposition \ref{prop: isotropies plates}, tell us that the considered
map is well-defined and one-to-one. It remains to prove that this
map is onto. Let 
\[
\begin{cases}
\Phi_{\lambda}\in\Lambda_{\lambda}\left(\ynorm\right)\\
\Phi_{-\lambda}\in\Lambda_{-\lambda}\left(\ynorm\right) & \,\,\,.
\end{cases}
\]
According to Theorem \ref{th: martinet ramis}, there exists 
\[
\left(\phi_{+},\phi_{-}\right)\in\diffsect[\arg\left(i\lambda\right)][\eta]\times\diffsect[\arg\left(-i\lambda\right)][\eta]
\]
with $\eta\in\left]\pi,2\pi\right[$, which extend analytically to
$S_{+}\times\left(\ww C^{2},0\right)$ and $S_{-}\times\left(\ww C^{2},0\right)$
respectively, such that:
\[
\phi_{\pm}\circ\left(\phi_{\mp}\right)_{\mid S_{\pm\lambda}\times\left(\ww C^{2},0\right)}^{-1}=\Phi_{\pm\lambda}
\]
and there also exists a formal diffeomorphism $\hat{\phi}$ which
is tangent to the identity, such that $\phi_{\pm}$ both admit $\hat{\phi}$
as asymptotic expansion in $S_{\pm}\times\left(\ww C^{2},0\right)$.
Let us consider the two germs of sectorial vector fields obtained
as
\[
Y_{\pm}:=\left(\phi_{\pm}^{-1}\right)_{*}\left(\ynorm\right)
\]
In particular, since $\hat{\phi}$ is the Gevrey-1 asymptotic expansion
of $\phi_{\pm}$, the vector fields $Y_{\pm}$ both admit $\left(\hat{\phi}\right)_{*}\left(\ynorm\right)$
as Gevrey-1 asymptotic expansion. The fact that $\phi_{+}\circ\left(\phi_{-}\right)^{-1}$
is an isotropy of $\ynorm$ implies immediately that $Y_{+}=Y_{-}$
on 
\begin{eqnarray*}
\left(S_{+}\cap S_{-}\right)\times\left(\ww C^{2},0\right) & = & \left(S_{\lambda}\cup S_{-\lambda}\right)\times\left(\ww C^{2},0\right)\,\,.
\end{eqnarray*}
Then, the vector field $Y$, which coincides with $Y_{\pm}$ in $S_{\pm}\times\left(\ww C^{2},0\right)$,
defines a germ of analytic vector field in $\left(\ww C^{3},0\right)$
by Riemann's theorem. By construction, ${\displaystyle Y\in\fdiff[\ww C^{3},0,\tx{Id}]_{*}\left(\ynorm\right)}$
and admits $\left(\Phi_{\lambda},\Phi_{-\lambda}\right)$ as Stokes
diffeomorphisms.

\subsubsection{Proof of Theorem \ref{th: espace de module symplectic}}

~

In a similar way, we prove now Theorem \ref{th: espace de module symplectic}.
\begin{proof}[Proof of Theorem \ref{th: espace de module symplectic}.]
\emph{}

Let $\ynorm\in\snodiag$ be a normal form which is also transversally
symplectic. We refer to subsection \ref{sub: transversally symplectic}
for the notations. It is clear from Theorems \ref{thm: espace de module}
and \ref{thm: Th ham} that the mapping is well-defined and one-to-one.
It remains to prove that it is also onto. Let 
\[
\begin{cases}
\Phi_{\lambda}\in\Lambda_{\lambda}^{\omega}\left(\ynorm\right)\\
\Phi_{-\lambda}\in\Lambda_{-\lambda}^{\omega}\left(\ynorm\right) & \,\,\,.
\end{cases}
\]
Since $\Lambda_{\lambda}^{\omega}\left(\ynorm\right)\subset\Lambda_{\lambda}\left(\ynorm\right)$
and $\Lambda_{-\lambda}^{\omega}\left(\ynorm\right)\subset\Lambda_{-\lambda}\left(\ynorm\right)$,
according to Theorem \ref{th: martinet ramis} there exists 
\[
\left(\phi_{+},\phi_{-}\right)\in\diffsect[\arg\left(i\lambda\right)][\eta]\times\diffsect[\arg\left(-i\lambda\right)][\eta]
\]
with $\eta\in\left]\pi,2\pi\right[$, which extend analytically in
$S_{+}\times\left(\ww C^{2},0\right)$ and $S_{-}\times\left(\ww C^{2},0\right)$
respectively, such that:
\[
\phi_{\pm}\circ\left(\phi_{\mp}\right)_{\mid S_{\pm\lambda}\times\left(\ww C^{2},0\right)}^{-1}=\Phi_{\pm\lambda}
\]
and there also exists a formal diffeomorphism $\hat{\phi}$ which
is tangent to the identity, such that $\phi_{\pm}$ both admit $\hat{\phi}$
as Gevrey-1 asymptotic expansion in $S_{\pm}\times\left(\ww C^{2},0\right)$.
According to Corollary \ref{cor: MR symplectic}, there exists a germ
of an analytic fibered diffeomorphism $\psi\in\fdiff[\ww C^{3},0,\tx{Id}]$
(tangent to the identity), such that
\[
\sigma_{\pm}:=\phi_{\pm}\circ\psi
\]
both are transversally symplectic. Then, we have: 
\[
\sigma_{\pm}\circ\left(\Psi_{\mp}\right)_{\mid S_{\pm\lambda}\times\left(\ww C^{2},0\right)}^{-1}=\Phi_{\pm\lambda}~.
\]
 The end of the proof goes exactly as at the end of the proof of the
previous theorem. 
\end{proof}

\section{\label{sec:Sectorial-isotropies-and}Sectorial isotropies and space
of leaves: proof of Proposition \ref{prop: isotropies plates}}

~

A normal form 
\[
\ynorm=x^{2}\pp x+\left(-\lambda+a_{1}x-c\left(y_{1}y_{2}\right)\right)y_{1}\pp{y_{1}}+\left(\lambda+a_{2}x+c\left(y_{1}y_{2}\right)\right)y_{2}\pp{y_{2}}
\]
is fixed for some $\lambda\in\ww C^{*},$ $\Re\left(a_{1}+a_{2}\right)>0$
and $c\in v\germ v$ (vanishing at the origin). The aim of this section
is to prove Proposition \ref{prop: isotropies plates} stated in Section
\ref{sec: analytic classification}.

Let us denote $a:=\res{\ynorm}=a_{1}+a_{2}$, $m:={\displaystyle \frac{1}{a}}$
and 
\[
c\left(v\right)=\sum_{k=1}^{+\infty}c_{k}v^{k}\,\,.
\]
If $m\notin\ww N_{>0}$, we set $c_{m}:=0$. We also define the following
power series 
\begin{eqnarray*}
\tilde{c}\left(v\right) & = & m\sum_{k\neq m}\frac{c_{k}}{k-m}v^{k}\,\,,
\end{eqnarray*}
and we notice that $\tilde{c}\left(v\right)\in v\germ v$.

\subsection{Sectorial first integrals and the space of leaves}

~

In a sectorial neighborhood of the origin of the form $S_{\lambda}\times\left(\ww C^{2},0\right)$
$\left(resp.\, S_{-\lambda}\times\left(\ww C^{2},0\right)\right)$,with
$S_{\pm\lambda}\in\germsect{\arg\left(\pm\lambda\right)}{\epsilon}$
and $\epsilon\in\left]0,\pi\right[$, we can give three first integrals
of $\ynorm$ which are analytic in the considered domain. Let us start
with the following proposition.
\begin{prop}
The following quantities are first integrals of $\ynorm$, analytic
in $S_{\pm\lambda}\times\left(\ww C^{2},0\right)$:

\begin{equation}
\begin{cases}
{\displaystyle w_{\pm\lambda}:=\frac{y_{1}y_{2}}{x^{a}}}\\
{\displaystyle h_{1,\pm\lambda}\left(x,\mathbf{y}\right):=y_{1}\exp\left(\frac{-\lambda}{x}+\frac{c_{m}\left(y_{1}y_{2}\right)^{m}\log\left(x\right)}{x}+\frac{\tilde{c}\left(y_{1}y_{2}\right)}{x}\right)x^{-a_{1}}}\\
{\displaystyle h_{2,\pm\lambda}\left(x,\mathbf{y}\right):=y_{2}\exp\left(\frac{\lambda}{x}-\frac{c_{m}\left(y_{1}y_{2}\right)^{m}\log\left(x\right)}{x}-\frac{\tilde{c}\left(y_{1}y_{2}\right)}{x}\right)x^{-a_{2}}}
\end{cases}\label{eq: integrales premieres}
\end{equation}
(we fix here a branch of the logarithm analytic in $S_{\pm\lambda}$,
and we write simply $h_{j}$ and $w$ instead of $h_{j,\pm\lambda}$
and $w_{\pm\lambda}$ respectively, if there is no ambiguity on the
sector $S_{\pm\lambda}$).

Moreover, we have the relation:
\[
h_{1}h_{2}=w\,\,.
\]
\end{prop}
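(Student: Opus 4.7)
The plan is to verify directly that $\ynorm$ annihilates each of $w$, $h_{1}$ and $h_{2}$ (dropping the $\pm\lambda$ subscripts, as permitted by the statement) on $S_{\pm\lambda}\times(\mathbb{C}^{2},0)$. Since the sector $S_{\pm\lambda}$ has opening strictly less than $2\pi$, a single-valued branch of $\log x$ is available, and hence so are $x^{-a_{1}}$, $x^{-a_{2}}$, $x^{-a}$; the three functions are therefore genuinely analytic on the claimed domain, and the only real work is the annihilation.

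First I would compute $\ynorm(v)=axv$, where $v:=y_{1}y_{2}$: the $\mp\lambda\,y_{1}y_{2}$ contributions cancel, and so do the $\mp c(v)\,y_{1}y_{2}$ contributions, leaving only $(a_{1}+a_{2})xv=axv$. Combined with $\ynorm(x^{-a})=-ax\cdot x^{-a}$, this gives $\ynorm(w)=0$ immediately.

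For $h_{1}$, set $E_{1}:=-\lambda/x+c_{m}v^{m}\log(x)/x+\tilde{c}(v)/x$ so that $h_{1}=y_{1}e^{E_{1}}x^{-a_{1}}$; dividing through by $h_{1}$, the identity $\ynorm(h_{1})=0$ becomes $\ynorm(E_{1})=\lambda+c(v)$, since $\ynorm(y_{1})/y_{1}=-\lambda+a_{1}x-c(v)$ and $\ynorm(x^{-a_{1}})/x^{-a_{1}}=-a_{1}x$. I would then split this into three pieces: (i) $\ynorm(-\lambda/x)=\lambda$; (ii) using the crucial relation $ma=1$ one gets $\ynorm(v^{m})=mv^{m-1}\cdot avx=v^{m}x$, together with $\ynorm(\log x/x)=1-\log x$ by a direct product-rule computation, so the two $\log x$ contributions cancel and $\ynorm(c_{m}v^{m}\log x/x)=c_{m}v^{m}$; (iii) a direct calculation gives $\ynorm(\tilde{c}(v)/x)=av\tilde{c}'(v)-\tilde{c}(v)$, and the definition $\tilde{c}(v)=m\sum_{k\neq m}\frac{c_{k}}{k-m}v^{k}$ is tailored precisely so that this equals $c(v)-c_{m}v^{m}$ (indeed $ma=1$ makes each coefficient reduce to $c_{k}$ for $k\neq m$). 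Summing the three pieces yields $\lambda+c_{m}v^{m}+c(v)-c_{m}v^{m}=\lambda+c(v)$, as required.

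For $h_{2}$, the key observation is that its exponential factor is exactly $\exp(-E_{1})$, so
$$\frac{\ynorm(h_{2})}{h_{2}}=\frac{\ynorm(y_{2})}{y_{2}}-\ynorm(E_{1})-a_{2}x=\bigl(\lambda+a_{2}x+c(v)\bigr)-\bigl(\lambda+c(v)\bigr)-a_{2}x=0.$$
Finally, the relation $h_{1}h_{2}=y_{1}y_{2}\cdot x^{-a_{1}-a_{2}}=v x^{-a}=w$ is immediate since the exponentials cancel and $a_{1}+a_{2}=a$. The main technical point is step (ii): the cancellation of the $\log x$ terms depends crucially on $ma=1$, and the ``correction'' series $\tilde{c}$ is engineered to absorb the non-resonant part of $c(v)$, which is precisely why the distinguished resonant monomial $c_{m}v^{m}$ must be singled out in $E_{1}$ via a separate logarithmic contribution.
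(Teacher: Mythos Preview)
Your proof is correct and is precisely the elementary computation the paper alludes to (the paper's proof consists of the single sentence ``It is an elementary computation''). Your careful tracking of how the condition $ma=1$ forces the cancellation of the $\log x$ terms, and how the definition of $\tilde{c}$ is tailored so that $av\tilde{c}'(v)-\tilde{c}(v)=c(v)-c_{m}v^{m}$, is exactly what underlies that computation.
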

\begin{proof}
It is an elementary computation.\end{proof}
\begin{rem}
In other words, in a sectorial domain, we can parametrize a leaf (which
is not in $\acc{x=0}$) of the foliation associated to $\ynorm$ by:
\begin{eqnarray}
 & \begin{cases}
{\displaystyle y_{1}\left(x\right)=h_{1}\exp\left(\frac{\lambda}{x}-c_{m}\left(h_{1}h_{2}\right)^{m}\log\left(x\right)-\frac{\tilde{c}\left(h_{1}h_{2}x^{a}\right)}{x}\right)x^{a_{1}}}\\
{\displaystyle y_{2}\left(x\right)=h_{2}\exp\left(-\frac{\lambda}{x}+c_{m}\left(h_{1}h_{2}\right)^{m}\log\left(x\right)+\frac{\tilde{c}\left(h_{1}h_{2}x^{a}\right)}{x}\right)x^{a_{2}}}
\end{cases}\label{eq: solutions parametrees}\\
 & \left(h_{1},h_{2}\right)\in\ww C^{2}\,.\nonumber 
\end{eqnarray}
\end{rem}
\begin{cor}
\label{cor: coordonn=0000E9es espace des feuiles}The map 
\begin{eqnarray*}
{\cal H_{\pm\lambda}}:S_{\pm\lambda}\times\left(\ww C^{2},0\right) & \rightarrow & S_{\pm\lambda}\times\ww C^{2}\\
\left(x,\mathbf{y}\right) & \mapsto & \left(x,h_{1,\pm\lambda}\left(x,\mathbf{y}\right),h_{2,\pm\lambda}\left(x,\mathbf{y}\right)\right)\,\,,
\end{eqnarray*}
(where $h_{1,\pm\lambda},h_{2,\pm\lambda}$ are defined in (\ref{eq: integrales premieres}))
is a sectorial germ of a fibered analytic map in $S_{\pm\lambda}\times\left(\ww C^{2},0\right)$,
which is into. Moreover, there exists an open neighborhood of the
origin in $\ww C^{2}$, denoted by $\ls{\pm}\subset\ww C^{2}$, such
that 
\[
{\cal H_{\pm\lambda}}\left(S_{\pm\lambda}\times\left(\ww C^{2},0\right)\right)=S_{\pm\lambda}\times\ls{\pm}\,\,.
\]
In particular, ${\cal H_{\pm}}$ induces a fibered biholomorphism
\[
S_{\pm\lambda}\times\left(\ww C^{2},0\right)\overset{{\cal H_{\pm\lambda}}}{\longrightarrow}S_{\pm\lambda}\times\ls{\pm}
\]
which conjugates $\ynorm$ to $x^{2}\pp x$, \emph{i.e.}
\[
\left({\cal H_{\pm\lambda}}\right)_{*}\left(\ynorm\right)=x^{2}\pp x\,\,.
\]
\end{cor}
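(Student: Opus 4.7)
The plan is to verify, in order, four claims: (i) $w,h_1,h_2$ are well-defined holomorphic first integrals of $\ynorm$ with $h_1h_2=w$; (ii) $\cal H_{\pm\lambda}$ is a local fibered diffeomorphism; (iii) its image has the product form $S_{\pm\lambda}\times\ls{\pm}$; and (iv) in the new coordinates the push-forward is $x^2\pp x$. The first two are direct computations, the fourth is immediate from (i), and (iii) is the genuinely delicate point.

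First I would check (i) by applying $\ynorm$ termwise. One uses $\ynorm(v)=axv$ (the $c(v)$ contributions cancel between the two non-trivial components of $\ynorm$), $\ynorm(\log(x)/x)=1-\log(x)$, and the key identity
\[
c(v)=c_m v^m+av\tilde c'(v)-\tilde c(v),
\]
which follows from the definition of $\tilde c$ by observing that $ak-1=a(k-m)$, so $(ak-1)/(k-m)=a$. Combined with the normalization $am=1$, these make all troublesome terms (including the $\log(x)$ contributions) cancel in $\ynorm(h_1)$ and $\ynorm(h_2)$. The identity $h_1h_2=w$ then drops out of $e^{\alpha}e^{-\alpha}=1$ and $x^{-a_1}x^{-a_2}=x^{-a}$, where $\alpha$ denotes the common exponent appearing in the $h_j$.

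For (ii) I would compute the Jacobian of $(y_1,y_2)\mapsto(h_1,h_2)$ at fixed $x\in S_{\pm\lambda}$. Writing $h_1=y_1E(x,v)$ and $h_2=y_2\tilde E(x,v)$ with $v=y_1y_2$ and $E\tilde E=x^{-a}$ \emph{independent of $v$}, the mixed $\partial_v$-contributions cancel and one finds $\det=E\tilde E=x^{-a}$, which is non-zero on $S_{\pm\lambda}$. An explicit global inverse is then read off from the parametrization (\ref{eq: solutions parametrees}): solve $v=h_1h_2x^a$ and substitute into the exponential formulas for $y_1,y_2$.

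The main obstacle is (iii). Because $\exp(\pm\lambda/x)$ has wildly unbalanced behavior on $S_{\pm\lambda}$ --- one factor decays, the other blows up as $x\to 0$ --- the image of a naive product $S_{\pm\lambda}\times D(\mathbf{0},\mathbf{r})$ depends on $x$ and is \emph{not} a product. I would resolve this by exploiting the flexibility in Definition \ref{def: sectorial germs}: replace the source sectorial neighborhood by a $\ynorm$-saturated one (that is, a union of pieces of local leaves of $\ynorm$), and then define $\ls{\pm}$ as the fiber of the image over any chosen base point $x_0\in S_{\pm\lambda}$. The first-integral property from (i) forces every other fiber of the image to coincide with $\ls{\pm}$, yielding the desired product structure. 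Once this is in place, claim (iv) is immediate: $\ynorm(x)=x^2$ and $\ynorm(h_j)=0$ give $(\cal H_{\pm\lambda})_*\ynorm=x^2\pp x$.
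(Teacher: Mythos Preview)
Your parts (i), (ii) and (iv) are correct and in line with the paper's treatment; in fact (i) is precisely the content of the Proposition immediately preceding the Corollary, whose proof the paper reduces to ``an elementary computation'', and the paper gives no further proof of the Corollary itself.

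Your analysis of (iii) is where things diverge. You are right that the image of a genuine product $S_{\pm\lambda}\times\mathbf{D}(\mathbf{0},\mathbf{r})$ under $\cal H_{\pm\lambda}$ is \emph{not} a product, because the fiberwise image depends on $x$. But your proposed fix --- replacing the source by a $\ynorm$-saturated domain and invoking Definition~\ref{def: sectorial germs} --- does not work as stated. A $\ynorm$-saturated set over $S_{\lambda}$ has, by the very parametrization~(\ref{eq: solutions parametrees}), its $y_{2}$-extent collapsing to $0$ as $x\to 0$ (and its $y_{1}$-extent blowing up); hence it contains no product $S(r,\ldots)\times\mathbf{D}(\mathbf{0},\mathbf{r})$ with $r_{2}>0$ and therefore is \emph{not} an element of $\cal S_{\theta,\eta}$. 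So the ``flexibility'' you appeal to is not available here.

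The paper's resolution is different and is made explicit a few lines later, in Definition~\ref{def: espace des feuilles =0000E0 rayon fixe}: one keeps the polydisc $\mathbf{D}(\mathbf{0},\mathbf{r})$ as source, defines for each $x$ the fiberwise image $\lsr{\pm}{x,\mathbf{r}}$, and sets
\[
\ls{\pm}=\lsr{\pm}{\mathbf{r}}:=\bigcup_{x\in S_{\pm\lambda}}\lsr{\pm}{x,\mathbf{r}}\,.
\]
Thus the ``equality'' in the Corollary should be read as: the image of $\cal H_{\pm\lambda}$ is $\bigcup_{x}\{x\}\times\lsr{\pm}{x,\mathbf{r}}\subset S_{\pm\lambda}\times\ls{\pm}$, and $\ls{\pm}$ is the projection of this image to the $(h_{1},h_{2})$-plane (an open neighborhood of the origin, as confirmed by the Remark following the Definition). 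All subsequent arguments in Section~\ref{sec:Sectorial-isotropies-and} use this fiberwise description, not a literal product structure. Your instinct that (iii) is the delicate point is correct; the repair is bookkeeping of the $x$-dependent fibers rather than saturation.
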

\begin{defn}
\label{def: space of leaves}We call $\ls{\pm}$ \textbf{the space
of leaves of} $\ynorm$ in $S_{\pm\lambda}\times\left(\ww C^{2},0\right)$.\end{defn}
\begin{rem}
The set $\ls{\pm}$ depends on the choice of the neighborhood $\left(\ww C^{2},0\right)$,
but also on the choice of the sectorial neighborhood $S_{\pm\lambda}\in\germsect{\arg\left(\pm\lambda\right)}{\epsilon}$.
\end{rem}

\subsection{Sectorial isotropies in the space of leaves}

~

Now, we consider a germ of a sectorial isotropy $\psi_{\pm\lambda}\in\Lambda_{\pm\lambda}^{\left(\tx{weak}\right)}\left(\ynorm\right)$
and we denote by $\Gamma'_{\pm\lambda}$ the (germ of an) open subset
of $\ww C^{2}$ such that: 
\[
\cal H_{\pm\lambda}\circ\psi_{\pm}\left(S_{\pm\lambda}\times\left(\ww C^{2},0\right)\right)=S_{\pm\lambda}\times\Gamma'_{\pm\lambda}\,\,.
\]

\begin{prop}
\label{prop: isotropie espace des feuilles} With the notations and
assumptions above, the map 
\begin{eqnarray*}
\Psi_{\pm\lambda}:=\cal H_{\pm\lambda}\circ\psi_{\pm}\circ\cal H_{\pm\lambda}^{-1}:S_{\pm\lambda}\times\Gamma_{\pm\lambda} & \longrightarrow & S_{\pm\lambda}\times\Gamma'_{\pm\lambda}
\end{eqnarray*}
is a sectorial germ of a fibered biholomorphism from $S_{\pm\lambda}\times\Gamma_{\pm\lambda}$
to $S_{\pm\lambda}\times\Gamma'_{\pm\lambda}$, which is of the form:
\[
\Psi_{\pm\lambda}\left(x,h_{1},h_{2}\right)=\left(x,\Psi_{1,\pm\lambda}\left(h_{1},h_{2}\right),\Psi_{2,\pm\lambda}\left(h_{1},h_{2}\right)\right)\,\,.
\]
In particular, $\Psi_{1,\pm\lambda}$ and $\Psi_{2,\pm\lambda}$ are
analytic and depend only on $\left(h_{1},h_{2}\right)\in\Gamma_{\pm\lambda}$,
while $\Psi_{\pm\lambda}$ induces a biholomorphism (still written
$\Psi_{\pm\lambda}$): 
\begin{eqnarray*}
\Psi_{\pm\lambda}:\Gamma_{\pm\lambda} & \rightarrow & \Gamma'_{\pm\lambda}\\
\left(h_{1},h_{2}\right) & \mapsto & \left(\Psi_{1,\pm\lambda}\left(h_{1},h_{2}\right),\Psi_{2,\pm\lambda}\left(h_{1},h_{2}\right)\right)\,\,.
\end{eqnarray*}
\end{prop}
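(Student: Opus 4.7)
The plan is to exploit the fact that in the leaf coordinates provided by $\cal H_{\pm\lambda}$ the normal form $\ynorm$ is conjugated to the elementary vector field $x^{2}\pp x$, so that any fibered isotropy of $\ynorm$ becomes a fibered diffeomorphism commuting with this simple flow; this commutation will directly force $x$-independence in the transverse directions.

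First, I would observe that Corollary~\ref{cor: coordonn=0000E9es espace des feuiles} makes $\cal H_{\pm\lambda}$ a fibered biholomorphism from $S_{\pm\lambda}\times\wcc$ onto $S_{\pm\lambda}\times\ls{\pm}$. Since $\psi_{\pm}\in\Lambda_{\pm\lambda}^{\left(\tx{weak}\right)}\left(\ynorm\right)$ is itself a fibered sectorial biholomorphism of $S_{\pm\lambda}\times\wcc$, the composition $\Psi_{\pm\lambda}:=\cal H_{\pm\lambda}\circ\psi_{\pm}\circ\cal H_{\pm\lambda}^{-1}$ is a well-defined fibered sectorial biholomorphism from $S_{\pm\lambda}\times\ls{\pm}$ onto $S_{\pm\lambda}\times\lsp{\pm}$. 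It therefore has the form $\Psi_{\pm\lambda}(x,h_{1},h_{2})=\bigl(x,\Psi_{1,\pm\lambda}(x,h_{1},h_{2}),\Psi_{2,\pm\lambda}(x,h_{1},h_{2})\bigr)$, with the two components a priori depending on $x$.

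The key step is to eliminate this $x$-dependence. Since $\psi_{\pm}$ is an isotropy of $\ynorm$, i.e. $\left(\psi_{\pm}\right)_{*}\ynorm=\ynorm$, and since Corollary~\ref{cor: coordonn=0000E9es espace des feuiles} gives $\left(\cal H_{\pm\lambda}\right)_{*}\ynorm=x^{2}\pp x$, functoriality of the push-forward yields
\[
\left(\Psi_{\pm\lambda}\right)_{*}\Bigl(x^{2}\pp x\Bigr)=x^{2}\pp x\qquad\text{on }S_{\pm\lambda}\times\ls{\pm}.
\]
Writing this out as $\tx D\Psi_{\pm\lambda}(x,h_{1},h_{2})\cdot(x^{2},0,0)^{T}=(x^{2},0,0)^{T}$ and using the fibered form of $\Psi_{\pm\lambda}$, the $x$-component is automatically satisfied, while the $j$-th component ($j=1,2$) reads $x^{2}\,\partial_{x}\Psi_{j,\pm\lambda}=0$. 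Hence $\partial_{x}\Psi_{j,\pm\lambda}\equiv0$ on $S_{\pm\lambda}\times\ls{\pm}$.

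Because $S_{\pm\lambda}$ is connected, this shows $\Psi_{j,\pm\lambda}$ depends only on $(h_{1},h_{2})\in\ls{\pm}$ and defines a holomorphic function there; applying the same argument to $\Psi_{\pm\lambda}^{-1}$ gives the analogous statement for the inverse, so the induced map $(h_{1},h_{2})\mapsto\bigl(\Psi_{1,\pm\lambda}(h_{1},h_{2}),\Psi_{2,\pm\lambda}(h_{1},h_{2})\bigr)$ is a genuine biholomorphism $\ls{\pm}\to\lsp{\pm}$. I do not anticipate any real obstacle: all assertions reduce to functoriality of the push-forward combined with the fact that in leaf coordinates the normal form takes the rectified shape $x^{2}\pp x$, the latter having no component in the transverse directions.
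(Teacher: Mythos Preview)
Your proof is correct and follows essentially the same approach as the paper: both reduce the statement to the observation that $\Psi_{\pm\lambda}$ is a fibered isotropy of $x^{2}\pp x$, and then read off $\partial_{x}\Psi_{j,\pm\lambda}=0$ from the push-forward relation. You simply spell out a few more details (the connectedness of $S_{\pm\lambda}$ and the argument for the inverse) that the paper leaves implicit.
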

\begin{proof}
We only have to prove that $\Psi_{1,\pm\lambda}$ and $\Psi_{2,\pm\lambda}$
depend only on $\left(h_{1},h_{2}\right)\in\Gamma_{\pm\lambda}$.
By assumption, $\Psi_{\pm\lambda}$ is an isotropy of $x^{2}\pp x$:
\begin{eqnarray*}
\left(\Psi_{\pm\lambda}\right)_{*}\left(x^{2}\pp x\right) & = & x^{2}\pp x\,\,.
\end{eqnarray*}
We immediately obtain:
\[
\ppp{\Psi_{1,\pm\lambda}}x=\ppp{\Psi_{2,\pm\lambda}}x=0\,\,.
\]
 
\end{proof}
In the space of leaves $\Gamma_{\pm\lambda}$ equipped with coordinates
$\left(h_{1},h_{2}\right)$, we denote by $w$ the product of $h_{1}$
and $h_{2}$:
\[
w\left(h_{1},h_{2}\right):=h_{1}h_{2}\,\,.
\]
We define the two following quantities: 
\begin{equation}
\begin{cases}
f_{1}\left(x,w\right):=\exp\left(\frac{\lambda}{x}-c_{m}w^{m}\log\left(x\right)-\frac{\tilde{c}\left(wx^{a}\right)}{x}\right)x^{a_{1}}\\
f_{2}\left(x,w\right):=\exp\left(-\frac{\lambda}{x}+c_{m}w^{m}\log\left(x\right)+\frac{\tilde{c}\left(wx^{a}\right)}{x}\right)x^{a_{2}} & ,
\end{cases}\label{eq: f1 et f2}
\end{equation}
such that the leaves of the foliations are parametrized by: 
\[
\begin{cases}
y_{1}\left(x\right)=h_{1}f_{1}\left(x,h_{1}h_{2}\right)\\
y_{2}\left(x\right)=h_{2}f_{2}\left(x,h_{1}h_{2}\right)
\end{cases},\,\left(h_{1},h_{2}\right)\in\ww C^{2}\,.
\]
Notice that:
\begin{eqnarray*}
f_{1}\left(x,w\right)f_{2}\left(x,w\right) & = & x^{a}\,\,.
\end{eqnarray*}
Moreover, one checks immediately the following statement.
\begin{fact}
\label{fact: limites}For all $w\in\ww C$:
\[
\begin{cases}
\underset{\substack{x\rightarrow0\\
x\in S_{\lambda}
}
}{\lim}\abs{f_{1}\left(x,w\right)}=\underset{\substack{x\rightarrow0\\
x\in S_{-\lambda}
}
}{\lim}\abs{f_{2}\left(x,w\right)}=+\infty\\
\underset{\substack{x\rightarrow0\\
x\in S_{-\lambda}
}
}{\lim}\abs{f_{1}\left(x,w\right)}=\underset{\substack{x\rightarrow0\\
x\in S_{\lambda}
}
}{\lim}\abs{f_{2}\left(x,w\right)}=0 & \,\,.
\end{cases}
\]

\end{fact}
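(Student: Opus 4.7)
The plan is to compute $\log|f_1(x,w)|$ explicitly and identify the single dominant term as $x\to 0$ inside the sectors, with everything else being $o(1/|x|)$.

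Writing $\log x=\log|x|+i\arg(x)$ for the fixed branch of the logarithm on $S_{\pm\lambda}$, and taking the real part of the exponent in the definition $(\ref{eq: f1 et f2})$ of $f_1$, one gets
\[
\log|f_1(x,w)|=\operatorname{Re}\!\left(\tfrac{\lambda}{x}\right)+\bigl(\operatorname{Re}(a_1)-\operatorname{Re}(c_mw^m)\bigr)\log|x|-\operatorname{Re}\!\left(\tfrac{\tilde c(wx^a)}{x}\right)+B(x,w),
\]
where $B(x,w)$ collects the $\arg(x)$-contributions and is bounded on $S_{\pm\lambda}$ (because $\arg(x)$ stays bounded there). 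The first step is then purely geometric: since $S_\lambda\in\mathcal{AS}_{\arg(\lambda),\pi}$, writing $x=\lambda re^{i\theta}$ with $|\theta|<\pi/2-\epsilon'$ for some $\epsilon'>0$ (depending on the chosen representative), one has $\operatorname{Re}(\lambda/x)=\cos(\theta)/r\to+\infty$ at rate $1/|x|$ as $x\to 0$ in $S_\lambda$; symmetrically $\operatorname{Re}(\lambda/x)\to-\infty$ at rate $1/|x|$ as $x\to 0$ in $S_{-\lambda}$.

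Next I would show every other term is $o(1/|x|)$. The $\log|x|$-term is obviously negligible compared to $1/|x|$. For the last term, since $\tilde c\in v\,\mathbb{C}\{v\}$ there is a constant $C>0$ such that $|\tilde c(wx^a)|\leq C|w|\,|x|^{\operatorname{Re}(a)}$ uniformly on any small disc in $w$, hence
\[
\left|\tfrac{\tilde c(wx^a)}{x}\right|\leq C|w|\,|x|^{\operatorname{Re}(a)-1}=o\!\left(\tfrac{1}{|x|}\right)
\]
because $\operatorname{Re}(a)>0$ implies $|x|^{\operatorname{Re}(a)}\to 0$. Combining these two estimates with the dichotomy for $\operatorname{Re}(\lambda/x)$ yields $\log|f_1(x,w)|\to+\infty$ in $S_\lambda$ and $\log|f_1(x,w)|\to-\infty$ in $S_{-\lambda}$, which are the asserted limits for $f_1$.

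For $f_2$ I would either run the same argument verbatim (the exponent is the negative of that of $f_1$ up to the modified factor $x^{a_2}$ rather than $x^{a_1}$, and the same estimates apply), or use the identity $f_1\cdot f_2=x^a$: once $|f_1|\to+\infty$ in $S_\lambda$, dividing gives $|f_2|=|x|^{\operatorname{Re}(a)}e^{-\operatorname{Im}(a)\arg(x)}/|f_1|\to 0$ on $S_\lambda$, and the reverse on $S_{-\lambda}$. There is no real obstacle here—the only mild technical point is being careful that the bound on $\tilde c(wx^a)/x$ is crushed by $1/|x|$ regardless of whether $\operatorname{Re}(a)\gtrless 1$, which is handled precisely by the hypothesis $\operatorname{Re}(a)>0$ appearing in the definition of $\snodiag$.
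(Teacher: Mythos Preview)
Your argument is correct and is precisely the elementary verification the paper has in mind; the paper itself offers no proof beyond the remark ``one checks immediately the following statement''. The only cosmetic point worth noting is that the control $|\theta|<\pi/2-\epsilon'$ on $\arg(x/\lambda)$ comes from working on a fixed representative sector of the germ $S_{\pm\lambda}$ (which has opening strictly less than~$\pi$), so your parenthetical ``depending on the chosen representative'' is exactly the right caveat.
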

Using notations of Proposition \ref{prop: isotropie espace des feuilles},
we also assume from now on that $\left(\ww C^{2},0\right)=\mathbf{D\left(0,r\right)}$,
with $\mathbf{r}=\left(r_{1},r_{2}\right)\in\left(\ww R_{>0}\right)^{2}$
and $r_{1},r_{2}>0$ small enough so that 
\[
\psi_{\pm\lambda}\left(S_{\pm\lambda}\times\mathbf{D\left(0,r\right)}\right)\subset S_{\pm\lambda}\times\mathbf{D\left(0,r'\right)}
\]
for some $\mathbf{r'}=\left(r'_{1},r'_{2}\right)\in\left(\ww R_{>0}\right)^{2}$.
Let us now define in a general way the following set associated to
the sector $S_{\pm\lambda}$ and to a polydisc $\mathbf{D}\left(\mathbf{0},\tilde{\mathbf{r}}\right)$,
with $\tilde{\mathbf{r}}:=\left(\tilde{r}_{1},\tilde{r}_{2}\right)$.
\begin{defn}
\label{def: espace des feuilles =0000E0 rayon fixe}For all $x\in S_{\pm\lambda}$
et $\tilde{\mathbf{r}}:=\left(\tilde{r}_{1},\tilde{r}_{2}\right)\in\left(\ww R_{>0}\right)^{2}$,
we define
\[
\lsr{\pm}{x,\tilde{\mathbf{r}}}:=\acc{\left(h_{1},h_{2}\right)\in\ww C^{2}\mid\abs{h_{j}}\leq\frac{\tilde{r}_{j}}{\abs{f_{j}\left(x,h_{1}h_{2}\right)}}~,\,\mbox{for }j\in\left\{ 1,2\right\} }\,\,.
\]
We also consider the: 
\begin{eqnarray*}
\lsr{\pm}{\tilde{\mathbf{r}}} & := & \bigcup_{\substack{x\in S_{\pm\lambda}}
}\lsr{\pm}{x,\tilde{\mathbf{r}}}\\
 & = & \acc{\left(h_{1},h_{2}\right)\in\ww C^{2}\mid\exists x\in S_{\pm\lambda}\mbox{ s.t. }\abs{h_{j}}\leq\frac{\tilde{r}_{j}}{\abs{f_{j}\left(x,h_{1}h_{2}\right)}}~,~\mbox{for }j\in\left\{ 1,2\right\} }
\end{eqnarray*}
(\emph{cf. }figure \ref{fig:Illustration-de-l'espace}).
\end{defn}
Since we assume now that $\left(\ww C^{2},0\right)=\mathbf{D\left(0,r\right)}$,
then we have:
\[
\ls{\pm}=\lsr{\pm}{\mathbf{r}}\,\,,
\]
and 
\[
\lsp{\pm}\subset\lsr{\pm}{\mathbf{r}'}\,\,.
\]

\begin{figure}
\includegraphics[scale=0.22]{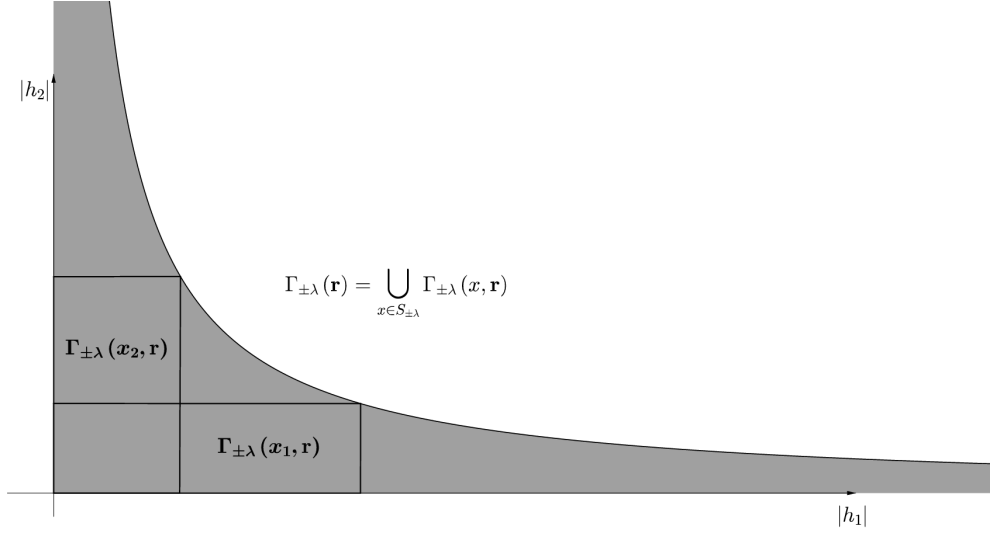}\protect\caption{\selectlanguage{french}%
\label{fig:Illustration-de-l'espace}\foreignlanguage{english}{Representation
of the space of leaves in terms of $\protect\abs{h_{1}}$ and $\protect\abs{h_{2}}$
when $c=0$: in this case, it is a\emph{ Reinhardt domain }(\emph{cf.
}\cite{hormander1973introduction})}.\selectlanguage{english}%
}
\end{figure}

\begin{rem}
~
\begin{enumerate}
\item It is important to notice that the particular form of $\Psi_{\pm\lambda}$
implies that the image of any fiber 
\[
\acc{x=x_{0}}\times\lsr{\pm}{x_{0},\mathbf{r}}
\]
by $\Psi_{\pm\lambda}$ is included in a fiber of the form 
\[
\acc{x=x_{0}}\times\lsr{\pm}{x_{0},\mathbf{r'}}\,\,.
\]

\item If $\left(h_{1},h_{2}\right)\in\lsr{\pm}{x,\mathbf{r}}$, then 
\begin{eqnarray*}
\abs{h_{1}h_{2}} & < & \frac{r_{1}r_{2}}{\abs{x^{a}}}\,\,.
\end{eqnarray*}

\item As $\left(h_{1},h_{2}\right)\in\Gamma_{\pm\lambda}$ varies the values
of $w=h_{1}h_{2}$ cover the whole $\ww C$.
\end{enumerate}
\end{rem}

\subsection{Action on the resonant monomial in the space of leaves}

~

Let us study the the action of $\Psi_{\pm\lambda}$ on the resonant
monomial $w=h_{1}h_{2}$ in the space of leaves.
\begin{lem}
\label{lem: isotropie monomone res esp des feuilles}We consider a
biholomorphism 
\begin{eqnarray*}
\Psi_{\pm\lambda}:\ls{\pm} & \tilde{\rightarrow} & \lsp{\pm}\\
\left(h_{1},h_{2}\right) & \mapsto & \left(\Psi_{1,\pm}\left(h_{1},h_{2}\right),\Psi_{2,\pm}\left(h_{1},h_{2}\right)\right)\,\,,
\end{eqnarray*}
such that for all $x\in S_{\pm\lambda}$, we have 
\[
\Psi_{\pm\lambda}\left(\lsr{\pm}{x_{0},\mathbf{r}}\right)\subset\lsr{\pm}{x_{0},\mathbf{r'}}\,\,.
\]
We also define $\Psi_{w,\pm\lambda}:=\Psi_{1,\pm\lambda}\Psi_{2,\pm\lambda}$.
Then, for all $n\in\ww N$, there exists entire (\emph{i.e. }analytic
over $\ww C$) functions $\Psi_{w,\lambda,n}$ and $\Psi_{w,-\lambda,n}$
such that 
\[
\begin{cases}
{\displaystyle \Psi_{w,\lambda}\left(h_{1},h_{2}\right)=\sum_{n\geq0}\Psi_{w,\lambda,n}\left(h_{1}h_{2}\right)h_{1}^{n}}\\
{\displaystyle \Psi_{w,-\lambda}\left(h_{1},h_{2}\right)=\sum_{n\geq0}\Psi_{w,-\lambda,n}\left(h_{1}h_{2}\right)h_{2}^{n}} & \,.
\end{cases}
\]
Moreover, the series above uniformly converge (for the $\sup$-norm)
in every subset of $\ls{\pm}$ of the form $\lsr{\pm}{\tilde{\mathbf{r}}}$,
with $\tilde{\mathbf{r}}:=\left(\tilde{r}_{1}\tilde{r}_{2}\right)$
and 
\[
0<\tilde{r}_{j}<r_{j}~~~,~j\in\left\{ 1,2\right\} 
\]
(\emph{cf.} Definition\emph{ }\ref{def: espace des feuilles =0000E0 rayon fixe}).
More precisely, for all $\tilde{r}_{1},\tilde{r}_{2},\delta>0$ such
that 
\[
0<\tilde{r}_{j}+\delta<r_{j}~~~,~j\in\left\{ 1,2\right\} 
\]
for all $x\in S_{\pm\lambda}$ and $w\in\ww C$ we have 
\begin{eqnarray*}
\abs{wx^{a}}\leq\tilde{r}_{1}\tilde{r}_{2} & \Longrightarrow & \begin{cases}
\abs{\Psi_{w,\lambda,n}\left(w\right)}\leq\frac{r'_{1}r'_{2}}{\abs{x^{a}}}\abs{\frac{f_{1}\left(x,w\right)}{\tilde{r}_{1}+\delta}}^{n}\\
\abs{\Psi_{w,-\lambda,n}\left(w\right)}\leq\frac{r'_{1}r'_{2}}{\abs{x^{a}}}\abs{\frac{f_{2}\left(x,w\right)}{\tilde{r}_{2}+\delta}}^{n}
\end{cases},\,\forall n\geq0\,.
\end{eqnarray*}
\end{lem}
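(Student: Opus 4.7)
I treat the $+\lambda$ case; the $-\lambda$ case follows by symmetry upon exchanging the roles of $(h_1, f_1)$ and $(h_2, f_2)$. The idea is to foliate $\ls{}$ by the level sets $\{h_1 h_2 = w\}$ and, for each fixed $w \in \ww C$, study the one-variable function
\[
G_w(h_1) := \Psi_{w,\lambda}(h_1, w/h_1).
\]
By Definition~\ref{def: espace des feuilles =0000E0 rayon fixe}, the slice $\Sigma_w := \{h_1 \in \ww C^* : (h_1, w/h_1) \in \ls{}\}$ is the union over $x \in S_\lambda$ of concentric annuli
\[
\rho_-(x,w) \leq |h_1| \leq \rho_+(x,w), \qquad \rho_+ = \frac{r_1}{|f_1(x,w)|}, \quad \rho_- = \frac{|wx^a|}{r_2\,|f_1(x,w)|}.
\]
As $x \to 0$ along $S_\lambda$, Fact~\ref{fact: limites} and $\Re(a) > 0$ give $|f_1| \to \infty$ and $|x^a| \to 0$, so $\rho_\pm \to 0$ with $\rho_-/\rho_+ = |wx^a|/(r_1 r_2) \to 0$. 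A straightforward continuity argument on the radial intervals $[\rho_-(x,w), \rho_+(x,w)]$ then shows that $\Sigma_w$ contains a full punctured disk $\{0 < |h_1| \leq R(w)\}$ on which $G_w$ is holomorphic, and therefore admits a Laurent expansion $G_w(h_1) = \sum_{n \in \ww Z} c_n(w)\,h_1^n$. The goal is to prove that $c_n(w) = 0$ for every $n < 0$, whence $G_w$ extends holomorphically through $h_1 = 0$ and its Taylor coefficients will provide the entire functions $\Psi_{w,\lambda,n}(w)$.

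The essential a priori bound comes from the fiber containment hypothesis: for $(h_1, w/h_1) \in \lsr{}{x, \mathbf{r}}$, writing $(H_1, H_2) := \Psi_\lambda(h_1, w/h_1) \in \lsr{}{x, \mathbf{r}'}$ and multiplying the inequalities $|H_j\,f_j(x, H_1 H_2)| \leq r'_j$ for $j = 1, 2$, together with $f_1 f_2 = x^a$ and $H_1 H_2 = G_w(h_1)$, yields
\[
|G_w(h_1)| \leq \frac{r'_1 r'_2}{|x^a|} \qquad\text{on each annulus } \rho_-(x,w) \leq |h_1| \leq \rho_+(x,w).
\]
Applying Cauchy's formula on the \emph{inner} circle $|h_1| = \rho_-(x,w)$, for $n < 0$ one obtains
\[
|c_n(w)| \leq \frac{r'_1 r'_2}{|x^a|}\,\rho_-(x,w)^{-n} = \frac{r'_1 r'_2\,|w|^{-n}\,|x^a|^{-n-1}}{r_2^{-n}\,|f_1(x,w)|^{-n}}.
\]
For $n < 0$ the exponents $-n$ and $-n-1$ are positive and non-negative respectively, so as $x \to 0$ in $S_\lambda$ the factor $|f_1|^{-n}$ in the denominator grows while $|x^a|^{-n-1}$ in the numerator stays bounded (and in fact vanishes for $n \leq -2$); either way the right-hand side tends to $0$, forcing $c_n(w) = 0$ for every $n < 0$. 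Thus $G_w(h_1) = \sum_{n \geq 0} \Psi_{w,\lambda,n}(w)\,h_1^n$ as claimed.

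For the quantitative estimate and the entireness of the coefficients, integrate instead on the circle $|h_1| = (\tilde r_1 + \delta)/|f_1(x,w)|$, which sits in $\Sigma_w$ provided $\tilde r_1 + \delta \leq r_1$ and $|wx^a| \leq (\tilde r_1 + \delta)\,r_2$ — both ensured by the hypotheses $\tilde r_j + \delta < r_j$ and $|wx^a| \leq \tilde r_1 \tilde r_2$. Cauchy's estimate on this circle produces directly
\[
|\Psi_{w,\lambda,n}(w)| \leq \frac{r'_1 r'_2}{|x^a|}\left(\frac{|f_1(x,w)|}{\tilde r_1 + \delta}\right)^{n}.
\]
Since for every $w \in \ww C$ one can choose $x \in S_\lambda$ making $|wx^a|$ arbitrarily small, the above contour is available for every $w$, and the standard holomorphic parameter dependence of Cauchy integrals then extends each $\Psi_{w,\lambda,n}$ to an entire function of $w \in \ww C$. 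Uniform convergence of $\sum \Psi_{w,\lambda,n}(w)\,h_1^n$ on $\lsr{}{\tilde{\mathbf{r}}}$ is then immediate: for $(h_1, h_2)$ in that set one has $|h_1\,f_1(x,w)| \leq \tilde r_1$, so each term is bounded by $(r'_1 r'_2/|x^a|)(\tilde r_1/(\tilde r_1 + \delta))^n$, which decays geometrically in $n$. The main technical obstacle is the vanishing of the negative Laurent coefficients; it relies crucially on the sectorial geometry of $S_\lambda$ (simultaneous blow-up of $|f_1|$ and vanishing of $|x^a|$) and on integrating on the inner radius $\rho_-$ rather than $\rho_+$, which is what makes both factors conspire in the estimate.
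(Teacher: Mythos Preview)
Your proof is correct and follows essentially the same strategy as the paper: restrict $\Psi_{w,\lambda}$ to the level sets $\{h_1h_2=w\}$, expand in Laurent series in $h_1$, bound the coefficients via Cauchy's formula using the fiber-containment estimate $|G_w|\le r'_1r'_2/|x^a|$, and let $x\to 0$ in $S_\lambda$ to kill the negative modes and obtain entireness plus the stated growth bounds. The only variation is that you apply Cauchy on the \emph{inner} radius $\rho_-$ to annihilate the negative coefficients while the paper uses the outer-type radius $(\tilde r_1+\delta)/|f_1|$ throughout; both choices work, so your closing remark that the inner radius is what ``makes both factors conspire'' is a slight overstatement --- on the outer circle the exponential growth of $|f_1|$ still beats the polynomial blow-up of $|x^a|^{-1}$, which is exactly how the paper argues.
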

\begin{proof}
Let us give the proof for $\Psi_{w,\lambda},\Psi_{1,\lambda}$ and
$\Psi_{2,\lambda}$ in $\Gamma_{\lambda}$ (the same proof applies
also for $\Psi_{w,-\lambda}$ in $\Gamma_{-\lambda}$ by exchanging
the role played by $h_{1}$ and $h_{2}$). We fix some $0<\tilde{r}_{j}<r_{j}$,
$j\in\left\{ 1,2\right\} $, and $\delta>0$ such that 
\[
0<\tilde{r}_{j}+\delta<r_{j}~~~,~j\in\left\{ 1,2\right\} .
\]
For a fixed value $w\in\ww C$, we consider the restriction of $\Psi_{w,\lambda}$
to the hypersurface $M_{w}:=\acc{h_{1}h_{2}=w}\cap\Gamma_{\lambda}$:
this restriction is analytic in $M_{w}$. The map
\[
\varphi_{w}:h_{1}\mapsto\Psi_{w,\lambda}\left(h_{1},\frac{w}{h_{1}}\right)
\]
 is analytic in 
\[
M_{w,1}:=\bigcup_{\substack{x\in S_{\lambda}\\
\abs{wx^{a}}<r_{1}r_{2}
}
}\Omega_{x,w}\,\,,
\]
where for all $x\in S_{\lambda}$ with $\abs{wx^{a}}<r_{1}r_{2}$,
the set $\Omega_{x,w}$ is the following annulus: 
\[
\Omega_{x,w}:=\acc{h_{1}\in\ww C\mid\abs{\frac{wf_{2}\left(x,w\right)}{r_{2}}}<\abs{h_{1}}<\abs{\frac{r_{1}}{f_{1}\left(x,w\right)}}}\,\,.
\]
In particular, $\varphi_{w}$ admits a Laurent expansion 
\[
\varphi_{w}\left(h_{1}\right)=\Psi_{w,+}\left(h_{1},\frac{w}{h_{1}}\right)=\sum_{n\geq-L}\Psi_{w,+,n}\left(w\right)h_{1}^{n}
\]
in every annulus $\Omega_{x,w}$, with $x\in S_{\lambda}$ such that
$\abs{wx^{a}}<r_{1}r_{2}$. Moreover for all $x\in S_{\lambda}$ such
that $\abs{wx^{a}}<r_{1}r_{2}$, Cauchy's formula gives 
\begin{eqnarray*}
\Psi_{w,\lambda,n}\left(w\right) & = & \frac{1}{2i\pi}\oint_{\gamma\left(x,w\right)}\frac{\Psi_{w,\lambda}\left(h_{1},\frac{w}{h_{1}}\right)}{h_{1}^{n+1}}\mbox{d}h_{1}\,\,,\mbox{ for all }n\in\wn,
\end{eqnarray*}
where $\gamma\left(x,w\right)$ is any circle (oriented positively)
centered at the origin with a radius $\rho\left(x,w\right)$ satisfying
\[
\abs{\frac{wf_{2}\left(x,w\right)}{r_{2}}}<\rho\left(x,w\right)<\abs{\frac{r_{1}}{f_{1}\left(x,w\right)}}\,\,.
\]
If $\abs{wx^{a}}<\left(\tilde{r}_{1}+\delta\right)\left(\tilde{r}_{2}+\delta\right)$,
we can take for instance 
\[
\rho\left(x,w\right)=\abs{\frac{\tilde{r}_{1}+\delta}{f_{1}\left(x,w\right)}}\,\,.
\]
Therefore, for all $x\in S_{\lambda}$ and all $w\in\ww C$ such that
$\abs{wx^{a}}\leq\tilde{r}_{1}\tilde{r}_{2}$, for all $\xi\in\ww C$
with $\abs{\xi}<\delta$, we also have: 
\begin{eqnarray*}
\Psi_{w,\lambda,n}\left(w+\xi\right) & = & \frac{1}{2i\pi}\oint_{\gamma\left(x,w\right)}\frac{\Psi_{w,\lambda}\left(h_{1},\frac{w+\xi}{h_{1}}\right)}{h_{1}^{n+1}}\mbox{d}h_{1}\,\,,\mbox{ for all }n\in\ww Z,
\end{eqnarray*}
where $\gamma\left(x,w\right)$ is the same circle $\Big($of radius
${\displaystyle \rho\left(x,w\right)=\abs{\frac{\tilde{r}_{1}+\delta}{f_{1}\left(x,w\right)}}}$$\Big)$
for all $\abs{\xi}<\delta$. Moreover, since for all $x\in S_{\lambda}$,
we have 
\[
\Psi_{\lambda}\left(\lsr{}{x,\mathbf{r}}\right)\subset\lsr{}{x,\mathbf{r'}}\,\,,
\]
and since for all $\left(h'_{1},h'_{2}\right)\in\lsr{}{x,\mathbf{r'}}$
we have 
\[
\abs{h'_{1}h'_{2}}\leq\frac{r'_{1}r'_{2}}{\abs{x^{a}}}\,\,,
\]
then for all $x\in S_{\lambda}$ and $w\in\ww C$ such that $\abs{wx^{a}}\leq\tilde{r}_{1}\tilde{r}_{2}$,
the following inequality holds for all $h_{1}$ with $\abs{h_{1}}<\frac{r_{1}}{f_{1}\left(x,w\right)}$:
\begin{eqnarray*}
\abs{\Psi_{w,\lambda}\left(h_{1},\frac{w}{h_{1}}\right)} & < & \frac{r'_{1}r'_{2}}{\abs{x^{a}}}\,.
\end{eqnarray*}
The well-known theorem regarding integrals depending analytically
on a parameter asserts that for all $n\in\ww Z$ the mapping $\Psi_{w,\lambda,n}$
is analytic near any point $w\in\ww C$. Hence, it is an entire function
(\emph{i.e.} analytic over $\wc$). Moreover, the inequality above
and the Cauchy's formula together imply that for all $n\in\ww Z$
and for all $\left(x,w\right)\in S_{\lambda}\times\ww C$ such that
$\abs{wx^{a}}\leq\tilde{r}_{1}\tilde{r}_{2}$, we have: 
\begin{eqnarray*}
\abs{\Psi_{w,\lambda,n}\left(w\right)} & < & \frac{r'_{1}r'_{2}}{\abs{x^{a}}\rho\left(x,w\right)^{n}}=\frac{r'_{1}r'_{2}}{\abs{x^{a}}}\abs{\frac{f_{1}\left(x,w\right)}{\tilde{r}_{1}+\delta}}^{n}\,.
\end{eqnarray*}
According to Fact \ref{fact: limites}, for a fixed value $w\in\ww C$,
if $n<0$, the right hand-side tends to $0$ as $x$ tends to $0$
in $S_{\lambda}$. This implies in particular that $\Psi_{w,\lambda,n}=0$
for all $n<0$. Consequently: 
\begin{eqnarray*}
\Psi_{w,\lambda}\left(h_{1},\frac{w}{h_{1}}\right) & = & \sum_{n\geq0}\Psi_{w,\lambda,n}\left(w\right)h_{1}^{n}\,\,.
\end{eqnarray*}
Moreover, for all $w\in\ww C$ the series converges normally in every
domain of the form 
\[
\Omega_{x,w}:=\acc{h_{1}\in\ww C\mid\abs{h_{1}}\leq\abs{\frac{\tilde{r}_{1}}{f_{1}\left(x,w\right)}}}\,\,,\mbox{ for all }x\in S_{\lambda}\,\,,\,0<\tilde{r}_{1}<r_{1},
\]
since the Laurent expansion's range is $n\geq0$. This actually means
that the series converges normally in an entire neighborhood of the
origin in $\ww C$. In particular, for all fixed $w\in\ww C$, the
map 
\[
h_{1}\mapsto\Psi_{w,\lambda}\left(h_{1},\frac{w}{h_{1}}\right)=\sum_{n\geq0}\Psi_{w,\lambda,n}\left(w\right)h_{1}^{n}
\]
 is analytic in a neighborhood of the origin. Finally, the series
\begin{eqnarray*}
\Psi_{w,\lambda}\left(h_{1},h_{2}\right) & = & \sum_{n\geq0}\Psi_{w,\lambda,n}\left(h_{1}h_{2}\right)h_{1}^{n}
\end{eqnarray*}
converges normally, and hence its sum is analytic in every domain
of the form $\lsr{}{\tilde{\mathbf{r}}}$, with $0<\tilde{r}_{1}<r_{1}$
and $0<\tilde{r}_{2}<r_{2}$.
\end{proof}

\subsection{Action on the resonant monomial}

~

Since $\psi_{\pm\lambda}\in\Lambda_{\pm\lambda}^{\left(\tx{weak}\right)}\left(\ynorm\right)$,
the mapping $\psi_{\pm\lambda}$ is of the form 
\begin{eqnarray*}
\psi_{\pm\lambda}\left(x,\mathbf{y}\right) & = & \left(x,\psi_{1,\pm\lambda}\left(x,\mathbf{y}\right),\psi_{2,\pm\lambda}\left(x,\mathbf{y}\right)\right)\,\,,
\end{eqnarray*}
with $\psi_{1,\pm\lambda},\psi_{2,\pm\lambda}$ analytic and bounded
in $S_{\pm\lambda}\times\mathbf{D\left(0,r\right)}$. Moreover, by
assumption $\psi_{\pm\lambda}$ admits the identity as weak Gevrey-1
asymptotic expansion, \emph{i.e. }we have a normally convergent expansion:
\begin{eqnarray*}
\psi_{i,\pm\lambda}\left(x,\mathbf{y}\right) & = & y_{i}+\sum_{\mathbf{k}\in\ww N^{2}}\psi_{i,\mathbf{\pm\lambda,k}}\left(x\right)\mathbf{y^{k}}\,\,,
\end{eqnarray*}
where $\psi_{i,\pm\lambda,\mathbf{k}}$ is holomorphic in $S_{\pm\lambda}$
and admits $0$ as Gevrey-1 asymptotic expansion, for $i=1,2$ and
all $\mathbf{k}=\left(k_{1},k_{2}\right)\in\ww N^{2}$.
\begin{lem}
\label{lem: istropie monomiale}With the notations and assumptions
above, let us define $\psi_{v,\pm\lambda}:=\psi_{1,\pm\lambda}\psi_{2,\pm\lambda}$.
Then $\psi_{v,\lambda}$ and $\psi_{v,-\lambda}$ can be expanded
as the series 
\[
\begin{cases}
{\displaystyle \psi_{v,\lambda}\left(x,\mathbf{y}\right)=y_{1}y_{2}+x^{a}\sum_{n\ge1}\Psi_{w,\lambda,n}\left(\frac{y_{1}y_{2}}{x^{a}}\right)\left(\frac{y_{1}}{f_{1}\left(x,\frac{y_{1}y_{2}}{x^{a}}\right)}\right)^{n}}\\
{\displaystyle \psi_{v,-\lambda}\left(x,\mathbf{y}\right)=y_{1}y_{2}+x^{a}\sum_{n\ge1}\Psi_{w,-\lambda,n}\left(\frac{y_{1}y_{2}}{x^{a}}\right)\left(\frac{y_{2}}{f_{2}\left(x,\frac{y_{1}y_{2}}{x^{a}}\right)}\right)^{n}}
\end{cases}
\]
which are normally convergent in every subset of $S_{\pm\lambda}\times\mathbf{D\left(0,r\right)}$
of the form $S_{\pm\lambda}\times\mathbf{\overline{D}\left(0,\tilde{r}\right)},$
where $\mathbf{\overline{D}\left(0,\tilde{r}\right)}$ is a closed
poly-disc with $\mathbf{\tilde{r}}=\left(\tilde{r}_{1},\tilde{r}_{2}\right)$
such that 
\[
0<\tilde{r}_{j}<r_{j}~~~,~j\in\left\{ 1,2\right\} .
\]
Here $\Psi_{w,\lambda,n}$ and $\Psi_{w,-\lambda,n}$ , for $n\in\ww N$,
are the ones appearing in Lemma \ref{lem: isotropie monomone res esp des feuilles}.
Moreover, for all closed sub-sector $S'\subset S_{\pm\lambda}$ and
for all closed poly-disc $\mathbf{\overline{D}}\subset\mathbf{D\left(0,r\right)}$,
there exists $A,B>0$ such that:
\begin{eqnarray*}
\abs{\psi_{v,\pm\lambda}\left(x,y_{1},y_{2}\right)-y_{1}y_{2}} & \leq & A\exp\left(-\frac{B}{\abs x}\right)\,\,,\,\,\forall\left(x,\mathbf{y}\right)\in S'\times\mathbf{\overline{D}}\,\,.
\end{eqnarray*}
In particular, $\psi_{v,\pm\lambda}$ admits $y_{1}y_{2}$ as Gevrey-1
asymptotic expansion in $S_{\pm\lambda}\times\mathbf{D\left(0,r\right)}$.\end{lem}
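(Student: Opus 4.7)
The plan is to transport the expansion of Lemma~\ref{lem: isotropie monomone res esp des feuilles} back into the original coordinates, identify the $n=0$ contribution by an entire-function growth argument, and then obtain exponential smallness of the remaining terms by exploiting the freedom of the test point in that lemma's bound. I carry out the argument for $S_\lambda$; the $S_{-\lambda}$ case is identical upon swapping $(h_1, f_1) \leftrightarrow (h_2, f_2)$.

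First I would observe that $\Psi_{\pm\lambda} = \cal H_{\pm\lambda} \circ \psi_{\pm\lambda} \circ \cal H_{\pm\lambda}^{-1}$ together with $f_1(x,w) f_2(x,w) = x^a$ forces
\[
\psi_{v,\pm\lambda}(x, \mathbf{y}) \;=\; x^a \, \Psi_{w,\pm\lambda}\bigl(h_1(x, \mathbf{y}), h_2(x, \mathbf{y})\bigr).
\]
Substituting the expansion $\Psi_{w,\lambda}(h_1, h_2) = \sum_{n\geq 0} \Psi_{w,\lambda,n}(h_1 h_2)\, h_1^n$ from Lemma~\ref{lem: isotropie monomone res esp des feuilles} and using $h_1 h_2 = y_1 y_2 / x^a$, $h_1 = y_1 / f_1(x, y_1 y_2/x^a)$ yields the claimed series. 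Normal convergence on $S_{\pm\lambda} \times \overline{\mathbf{D}(\mathbf{0}, \tilde{\mathbf{r}})}$ for $\tilde{r}_j < r_j$ follows immediately from the estimate of Lemma~\ref{lem: isotropie monomone res esp des feuilles} at $x' = x$, dominating the $n$-th term by $r'_1 r'_2 (|y_1|/(\tilde{r}_1 + \delta))^n$, a geometric series.

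Next I would identify the $n = 0$ contribution by showing $\Psi_{w,\lambda,0}(w) = w$. The bound of Lemma~\ref{lem: isotropie monomone res esp des feuilles} with $n = 0$ gives $|\Psi_{w,\lambda,0}(w)| \leq r'_1 r'_2 / |x^a|$ for every $x \in S_\lambda$ with $|wx^a| \leq \tilde{r}_1 \tilde{r}_2$; optimizing in $|x|$ as $|w| \to \infty$ produces linear growth, so $\Psi_{w,\lambda,0}$ is a polynomial $a_0 + a_1 w$ of degree at most one. Evaluating the series at $\mathbf{y} = \mathbf{0}$ yields $\psi_{v,\lambda}(x, \mathbf{0}) = a_0 x^a$; since by the weak Gevrey-$1$ asymptotic expansion of $\psi_{v,\lambda}$ this restriction is itself Gevrey-$1$ asymptotic to $0$ in $x$, applying $|a_0 x^a| \leq AC^N N!|x|^N$ with $N > \Re a$ forces $a_0 = 0$. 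Reading off the coefficient of $y_1 y_2$ from the series, only the $n=0$ term contributes (the $n \geq 1$ terms produce monomials $y_1^{n+k} y_2^k$ with $n+k > k$), so this coefficient equals the constant $a_1$; matching against the weak Gevrey-$1$ expansion, whose $y_1 y_2$-coefficient is $1$ modulo Gevrey-flat terms, then yields $a_1 = 1$.

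The main obstacle is the exponential-smallness estimate for the $n \geq 1$ remainder. I would apply the bound of Lemma~\ref{lem: isotropie monomone res esp des feuilles} at a test point $x' := \alpha x$ with fixed $\alpha > 1$ chosen so that the validity constraint $|wx'^a| \leq \tilde{r}_1 \tilde{r}_2$ persists on a slightly shrunken polydisc (achievable by taking $\tilde{r}_j$ close to $r_j$ relative to $\overline{\mathbf{D}} \subset \mathbf{D(0,r)}$). Combined with the factor $(y_1/f_1(x,w))^n$, this produces the decisive ratio $|f_1(\alpha x, w)/f_1(x, w)|^n$; in any closed subsector $S' \subset S_\lambda$ one has $\Re(\lambda/x) \geq \kappa/|x|$, so the leading exponential factor $\exp(\lambda/x)$ in $f_1$ supplies a decay of the form $\exp\bigl((1/\alpha - 1)\kappa/|x|\bigr)$. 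The delicate point---requiring the most care---is that when $m = 1/a \in \ww N_{>0}$, the resonant contribution $c_m w^m \log(x)/x$, which becomes $c_m (y_1 y_2)^m \log(x)/x^2$ after substitution, must be carefully tracked in the difference between $x'$ and $x$ to verify that the exponential decay from the principal factor is preserved; this amounts to a careful choice of $\alpha$ to dominate the ancillary $\log(x)/x^2$ perturbation uniformly in bounded $\mathbf{y}$. Once a uniform estimate $|f_1(\alpha x, w)/f_1(x, w)|^n \leq \exp(-nB/|x|)$ is secured, summing the geometric series in $n$ yields the claimed bound $|\psi_{v,\pm\lambda} - y_1 y_2| \leq A \exp(-B/|x|)$, from which the Gevrey-$1$ asymptotic expansion is immediate.
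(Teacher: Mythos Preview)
Your approach is essentially the same as the paper's: transport via $\cal H_{\pm\lambda}$, invoke the expansion of Lemma~\ref{lem: isotropie monomone res esp des feuilles}, identify the $n=0$ term using the weak Gevrey-1 hypothesis, and estimate the tail by applying the bound of that lemma at a dilated point $\alpha x$ (the paper takes $\alpha=2$). Your identification of $\Psi_{w,\lambda,0}(w)=w$ via a Liouville-type growth argument is a mild variant of the paper's method, which instead expands $x^{a}\Psi_{w,\lambda,0}(v/x^{a})=\sum_{k}\alpha_{k}v^{k}x^{a(1-k)}$ and kills each $\alpha_{k}$, $k\neq1$, directly from the flatness of $\psi_{v,\lambda,(k,k)}$; both are valid. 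One bookkeeping slip: in $f_{1}(x,w)$ the resonant term is $-c_{m}w^{m}\log(x)$ (no extra $1/x$), so in the ratio $f_{1}(\alpha x,w)/f_{1}(x,w)$ it contributes $-c_{m}w^{m}\log\alpha=-c_{m}(y_{1}y_{2})^{m}\log(\alpha)/x$ after substituting $w=y_{1}y_{2}/x^{a}$ (using $am=1$). This is an $O(1/|x|)$ perturbation with coefficient bounded by $|c_{m}|r_{1}^{m}r_{2}^{m}\log\alpha$, hence dominated by the leading $(1/\alpha-1)\Re(\lambda/x)$ term once $r_{1},r_{2}$ are chosen small enough---exactly as the paper argues. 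There is therefore no genuine $\log(x)/x^{2}$ obstruction; the situation is simpler than you feared, and your argument goes through.
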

\begin{proof}
By definition, we have 
\[
\Psi_{\pm\lambda}\circ\cal H_{\pm\lambda}=\cal H_{\pm\lambda}\circ\psi_{\pm\lambda}\,.
\]
In particular, for all $\left(x,\mathbf{y}\right)\in S_{\pm\lambda}\times\mathbf{D\left(0,r\right)}$:
\begin{eqnarray*}
\Psi_{w,\pm}\left(x,\frac{y_{1}}{f_{1}\left(x,\frac{y_{1}y_{2}}{x^{a}}\right)},\frac{y_{2}}{f_{2}\left(x,\frac{y_{1}y_{2}}{x^{a}}\right)}\right) & = & \frac{\psi_{v,\pm}\left(x,y_{1},y_{2}\right)}{x^{a}}\,\,.
\end{eqnarray*}
Thus, according to Lemma \ref{lem: isotropie monomone res esp des feuilles}
we have: 
\begin{equation}
\begin{cases}
{\displaystyle \psi_{v,\lambda}\left(x,\mathbf{y}\right)=x^{a}\sum_{n\ge0}\Psi_{w,\lambda,n}\left(\frac{y_{1}y_{2}}{x^{a}}\right)\left(\frac{y_{1}}{f_{1}\left(x,\frac{y_{1}y_{2}}{x^{a}}\right)}\right)^{n}}\\
{\displaystyle \psi_{v,-\lambda}\left(x,\mathbf{y}\right)=x^{a}\sum_{n\ge0}\Psi_{w,-\lambda,n}\left(\frac{y_{1}y_{2}}{x^{a}}\right)\left(\frac{y_{2}}{f_{2}\left(x,\frac{y_{1}y_{2}}{x^{a}}\right)}\right)^{n}} & \,.
\end{cases}\label{eq: psi_v et Psi_v}
\end{equation}
Besides we know that $\psi_{v,\pm\lambda}$ admits $y_{1}y_{2}$ as
weak Gevrey-1 asymptotic expansion in $S_{\pm\lambda}\times\mathbf{D\left(0,r\right)}$:
\begin{eqnarray}
\psi_{v,\pm\lambda}\left(x,y_{1},y_{2}\right) & = & y_{1}y_{2}+\sum_{\mathbf{k}\in\ww N^{2}}\psi_{v,\pm\lambda,\mathbf{k}}\left(x\right)\mathbf{y^{k}}\,\,,\label{eq: psi_v developpement}
\end{eqnarray}
where for all $\mathbf{k}=\left(k_{1},k_{2}\right)\in\ww N^{2}$ the
mapping $\psi_{v,\pm\lambda,\mathbf{k}}$ is holomorphic in $S_{\pm\lambda}$
and admits $0$ as Gevrey-1 asymptotic expansion. Let us compare both
expressions of $\psi_{v,\pm\lambda}$ above. Looking at monomials
$\mathbf{y^{k}}$ with $k_{1}=k_{2}$ in (\ref{eq: psi_v developpement}),
and at terms corresponding to $n=0$ on the right-hand side of (\ref{eq: psi_v et Psi_v}),
we must have for all $x\in S_{\pm\lambda}$ and $v\in\ww C$ with
$\abs v<r_{1}r_{2}$:
\begin{eqnarray*}
v+\sum_{k\ge0}\psi_{v,\lambda,\mathbf{k}\left(k,k\right)}\left(x\right)v^{k} & = & x^{a}\Psi_{w,\lambda,0}\left(\frac{v}{x^{a}}\right)\,\,.
\end{eqnarray*}
Since $\Psi_{w,\pm\lambda,0}$ is analytic in $\ww C$, there exists
$\left(\alpha_{\text{\ensuremath{\pm\lambda},}k}\right)_{k\in\ww N}\subset\ww C$
such that
\begin{eqnarray*}
\Psi_{w,\pm\lambda,0}\left(\frac{v}{x^{a}}\right) & = & \sum_{k\ge0}\alpha_{\pm\lambda,k}\left(\frac{v}{x^{a}}\right)^{k}\,\,.
\end{eqnarray*}
This can only happen if $\alpha_{\pm\lambda,k}=0$ whenever $k\neq1$,
for $\psi_{v,\pm\lambda,\mathbf{k}}$ is holomorphic in $S_{\pm\lambda}$
and admits $0$ as Gevrey-1 asymptotic expansion. A further immediate
identification yields 
\begin{align*}
\Psi_{v,\pm\lambda,0}\left(w\right) & =w~.
\end{align*}
Thus
\[
\begin{cases}
{\displaystyle \psi_{v,\lambda}\left(x,\mathbf{y}\right)=y_{1}y_{2}+x^{a}\sum_{n\ge1}\Psi_{w,\lambda,n}\left(\frac{y_{1}y_{2}}{x^{a}}\right)\left(\frac{y_{1}}{f_{1}\left(x,\frac{y_{1}y_{2}}{x^{a}}\right)}\right)^{n}}\\
{\displaystyle \psi_{v,-\lambda}\left(x,\mathbf{y}\right)=y_{1}y_{2}+x^{a}\sum_{n\ge1}\Psi_{w,-\lambda,n}\left(\frac{y_{1}y_{2}}{x^{a}}\right)\left(\frac{y_{2}}{f_{2}\left(x,\frac{y_{1}y_{2}}{x^{a}}\right)}\right)^{n}} & \,\,.
\end{cases}
\]

Let us prove that $\psi_{v,\pm\lambda}$ admits $y_{1}y_{2}$ as Gevrey-1
asymptotic expansion in $S_{\pm\lambda}\times\left(\ww C^{2},0\right)$.
We have to show that ${\displaystyle \abs{\psi_{v,\pm\lambda}\left(x,y_{1},y_{2}\right)-y_{1}y_{2}}}$
is exponentially small with respect to $x\in S_{\pm\lambda}$, uniformly
in $\mathbf{y}\in\mathbf{D\left(0,r\right)}$. As for the previous
lemma, we perform the proof for $\psi_{v,\lambda}$ only (the same
proof applies for $\psi_{v,-\lambda}$ by exchanging $y_{1}$ and
$y_{2}$). 

From the computations above we derive 
\begin{eqnarray*}
\abs{\psi_{v,\lambda}\left(x,y_{1},y_{2}\right)-y_{1}y_{2}} & \leq & \sum_{n\geq1}\abs{x^{a}\Psi_{w,\lambda,n}\left(\frac{y_{1}y_{2}}{x^{a}}\right)\left(\frac{y_{1}}{f_{1}\left(x,\frac{y_{1}y_{2}}{x^{a}}\right)}\right)^{n}}\,\,.
\end{eqnarray*}
Let us fix $\tilde{r}_{1},\tilde{r}_{2},\delta>0$ in such a way that
\[
0<\tilde{r}_{j}+\delta<r_{j}~,~j\in\left\{ 1,2\right\} ~.
\]
Let us take $\abs x$, $\abs{y_{1}}$ and $\abs{y_{2}}$ small enough
so that 
\[
{\displaystyle 2x\in S_{\lambda}}
\]
 and 
\[
{\displaystyle \abs{y_{1}y_{2}}<\frac{\tilde{r}_{1}\tilde{r}_{2}}{\abs{2^{a}}}<r_{1}r_{2}\,\,.}
\]
According to Lemma \ref{lem: isotropie monomone res esp des feuilles},
for all $\tilde{x}\in S_{\lambda}$ and all $w\in\ww C$: 
\begin{eqnarray*}
\abs{w\tilde{x}^{a}}\leq\tilde{r}_{1}\tilde{r}_{2} & \Longrightarrow & \abs{\Psi_{w,\lambda,n}\left(w\right)}\leq\frac{r'_{1}r'_{2}}{\abs{\tilde{x}^{a}}}\abs{\frac{f_{1}\left(\tilde{x},w\right)}{\tilde{r}_{1}+\delta}}^{n}\,\,.
\end{eqnarray*}
In particular for $\tilde{x}=2x$ and ${\displaystyle w}=\frac{y_{1}y_{2}}{x^{a}}$
we derive $\abs{w\tilde{x}^{a}}<\tilde{r}_{1}\tilde{r}_{2}$, from
which we conclude 
\begin{eqnarray*}
\abs{\Psi_{w,\lambda,n}\left(\frac{y_{1}y_{2}}{x^{a}}\right)} & \leq & \frac{r'_{1}r'_{2}}{\abs{2^{a}x^{a}}}\abs{\frac{f_{1}\left(2x,\frac{y_{1}y_{2}}{x^{a}}\right)}{\tilde{r}_{1}+\delta}}^{n}\,\,.
\end{eqnarray*}
Consequently, for all $\left(x,y_{1},y_{2}\right)\in S_{\lambda}\times\mathbf{D\left(0,\tilde{r}\right)}$
with
\[
\begin{cases}
{\displaystyle 2x\in S_{\lambda}}\\
\abs{y_{1}y_{2}}<\frac{\tilde{r}_{1}\tilde{r}_{2}}{\abs{2^{a}}}<r_{1}r_{2} & ,
\end{cases}
\]
we have 
\begin{eqnarray*}
\abs{\psi_{v,\lambda}\left(x,y_{1},y_{2}\right)-y_{1}y_{2}} & \leq & \sum_{n\geq1}\abs{x^{a}\frac{r'_{1}r'_{2}}{2^{a}x^{a}}\left(\frac{f_{1}\left(2x,\frac{y_{1}y_{2}}{x^{a}}\right)}{\tilde{r}_{1}+\delta}\right)^{n}\left(\frac{y_{1}}{f_{1}\left(x,\frac{y_{1}y_{2}}{x^{a}}\right)}\right)^{n}}\\
 & \leq & \frac{r'_{1}r'_{2}}{\abs{2^{a}}}\sum_{n\geq1}\abs{\left(\frac{y_{1}}{\tilde{r}_{1}+\delta}\right)^{n}\left(\frac{f_{1}\left(2x,\frac{y_{1}y_{2}}{x^{a}}\right)}{f_{1}\left(x,\frac{y_{1}y_{2}}{x^{a}}\right)}\right)^{n}}\,\,.
\end{eqnarray*}
Since $\tilde{c}\left(v\right)$ is the germ of an analytic function
near the origin which is null at the origin, we can take $r_{1}$,$r_{2}>0$
small enough in order that for all closed sub-sector $S'\subset S_{\lambda}$
, for all $\tilde{r}_{1}\in\left]0,r_{1}\right[$ and $\tilde{r}_{2}\in\left]0,r_{2}\right[$,
there exist $A,B>0$ satisfying:
\begin{eqnarray*}
\left(x,y_{1},y_{2}\right)\in S'\times\mathbf{D\left(0,\tilde{r}\right)} & \Longrightarrow & \abs{\psi_{v,\lambda}\left(x,y_{1},y_{2}\right)-y_{1}y_{2}}A\exp\left(-\frac{B}{\abs x}\right)\,\,.
\end{eqnarray*}
Let us prove this. We need here to estimate the quantity:
\begin{eqnarray*}
\abs{\frac{f_{1}\left(2x,\frac{y_{1}y_{2}}{x^{a}}\right)}{f_{1}\left(x,\frac{y_{1}y_{2}}{x^{a}}\right)}} & = & \abs{2^{a_{1}}\exp\left(-\frac{\lambda}{2x}-c_{m}\frac{\left(y_{1}y_{2}\right)^{m}}{x}\log\left(2\right)-\frac{\tilde{c}\left(y_{1}y_{2}2^{a}\right)}{2x}+\frac{\tilde{c}\left(y_{1}y_{2}\right)}{x}\right)}\,\,.
\end{eqnarray*}
On only have tot deal with the case where $x\in S'$ is such that
$2x\in S'$ (otherwise, $x$ is ``far from the origin'', and we
conclude without difficulty). We have:
\begin{eqnarray*}
\left(x,y_{1},y_{2}\right)\in S'\times\mathbf{D\left(0,\tilde{r}\right)}\tx{\, et\,}2x\in S & \Longrightarrow & \abs{\frac{f_{1}\left(2x,\frac{y_{1}y_{2}}{x^{a}}\right)}{f_{1}\left(x,\frac{y_{1}y_{2}}{x^{a}}\right)}}\leq\abs{2^{a_{1}}}\exp\left(-\frac{B}{\abs x}\right)<1\,\,.
\end{eqnarray*}
Hence 
\begin{eqnarray*}
\abs{\psi_{v,\lambda}\left(x,y_{1},y_{2}\right)-y_{1}y_{2}} & \leq & \frac{r'_{1}r'_{2}}{\abs{2^{a}}}\sum_{n\geq1}\abs{\frac{2^{a_{1}}y_{1}}{\tilde{r}_{1}+\delta}\exp\left(-\frac{B}{\abs x}\right)}^{n}\\
 & \leq & \frac{r'_{1}r'_{2}}{\abs{2^{a}}}\frac{\abs{\frac{2^{a_{1}}y_{1}}{\tilde{r}_{1}+\delta}\exp\left(-\frac{B}{\abs x}\right)}}{1-\abs{\frac{2^{a_{1}}y_{1}}{\tilde{r}_{1}+\delta}\exp\left(-\frac{B}{\abs x}\right)}}\\
 & \leq & A\exp\left(-\frac{B}{\abs x}\right)\,\,,
\end{eqnarray*}
for a convenient $A>0$. 
\end{proof}
The latter lemma implies $\Psi_{v,\pm\lambda,0}\left(w\right)=w$,
having for consequence the next result.
\begin{cor}
\label{cor: quantit=0000E9 born=0000E9e}For all closed sub-sector
$S'\subset S_{\pm\lambda}$ and for all $\tilde{r}_{1}\in\left]0,r_{1}\right[$
and $\tilde{r}_{2}\in\left]0,r_{2}\right[$, there exists $A,B>0$
such that for all $x\in S'$: 
\begin{eqnarray*}
\left.\begin{array}{c}
{\displaystyle \abs{h_{1}}\leq\frac{\tilde{r}_{1}}{\abs{f_{1}\left(x,h_{1}h_{2}\right)}}}\\
{\displaystyle \abs{h_{2}}\leq\frac{\tilde{r}_{2}}{\abs{f_{2}\left(x,h_{1}h_{2}\right)}}}
\end{array}\right\}  & \Longrightarrow & \abs{\Psi_{w,\pm}\left(x,h_{1},h_{2}\right)-h_{1}h_{1}}\leq\frac{A\exp\left(-\frac{B}{\abs x}\right)}{\abs{x^{a}}}\,\,.
\end{eqnarray*}
In particular, there exists $C>0$ such that: 
\begin{eqnarray*}
\left.\begin{array}{c}
{\displaystyle \abs{h_{1}}\leq\frac{\tilde{r}_{1}}{\abs{f_{1}\left(x,h_{1}h_{2}\right)}}}\\
{\displaystyle \abs{h_{2}}\leq\frac{\tilde{r}_{2}}{\abs{f_{2}\left(x,h_{1}h_{2}\right)}}}
\end{array}\right\}  & \Longrightarrow & \frac{\abs{\exp\left(c_{m}\left(h_{1}h_{2}\right)^{m}\log\left(x\right)+\frac{\tilde{c}\left(x^{a}\left(h_{1}h_{2}\right)^{m}\right)}{x}\right)}}{\abs{\exp\left(c_{m}\left(\Psi_{w}\left(x,h_{1},h_{2}\right)\right)^{m}\log\left(x\right)+\frac{\tilde{c}\left(x^{a}\left(\Psi_{w}\left(x,h_{1},h_{2}\right)\right)^{m}\right)}{x}\right)}}<C\,.
\end{eqnarray*}

\end{cor}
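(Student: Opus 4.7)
The first inequality is essentially Lemma~\ref{lem: istropie monomiale} read in the space of leaves. Starting from the identity $\Psi_{\pm\lambda}\circ\cal H_{\pm\lambda} = \cal H_{\pm\lambda}\circ\psi_{\pm\lambda}$ together with $f_1 f_2 = x^a$, the substitution $y_j := h_j\,f_j(x, h_1 h_2)$ yields
\[
x^a\bigl(\Psi_{w,\pm\lambda}(x, h_1, h_2) - h_1 h_2\bigr) \;=\; \psi_{v,\pm\lambda}(x, y_1, y_2) - y_1 y_2,
\]
while the hypothesis $\abs{h_j} \leq \tilde r_j/\abs{f_j}$ is equivalent to $\abs{y_j}\leq\tilde r_j$. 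Lemma~\ref{lem: istropie monomiale} then produces $A, B>0$ with $\abs{\psi_{v,\pm\lambda}-y_1y_2}\leq A\exp(-B/\abs x)$ on $S'\times\overline{\mathbf D}(\mathbf 0,\tilde{\mathbf r})$, and dividing by $\abs{x^a}$ gives the first estimate.

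For the quotient of exponentials, set $w := h_1 h_2$, $w' := \Psi_{w,\pm\lambda}(x, h_1, h_2)$, and rescale via $u := x^a w$, $u' := x^a w'$. The resonance identity $am = 1$ gives $w^m = u^m/x$, so the logarithm of the quotient to be bounded rearranges to
\[
E \;=\; \frac{c_m\bigl(u^m - (u')^m\bigr)\log(x) + \tilde c(u) - \tilde c(u')}{x},
\]
the first summand being absent when $m\notin\ww N_{>0}$ (by the convention $c_m = 0$). The hypothesis forces $\abs u\leq\tilde r_1\tilde r_2$, and the first part of the corollary gives $\abs{u-u'} = \abs{x^a}\abs{w-w'}\leq A\exp(-B/\abs x)$, so also $\abs{u'}\leq 2\tilde r_1\tilde r_2$ once $\abs x$ is small enough.

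Since $\tilde c$ is holomorphic (with $\tilde c(0)=0$) and, when relevant, $z\mapsto z^m$ is entire on the fixed disk containing $u$ and $u'$, both are Lipschitz there, so $\abs{u^m - (u')^m}$ and $\abs{\tilde c(u)-\tilde c(u')}$ are each $O(\exp(-B/\abs x))$. Consequently
\[
\abs E \;\leq\; K\,\frac{(1 + \abs{\log x})\exp(-B/\abs x)}{\abs x},
\]
which is uniformly bounded on $S'$ (it tends to $0$ as $\abs x\to 0$ and is continuous on any annulus $\abs x\in[\varepsilon, r']$), say by $\log C$. Exponentiating yields the claimed bound $\abs{\exp E}\leq C$ uniformly in $(x, h_1, h_2)$ satisfying the hypotheses. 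The only non-routine point is spotting the cancellation $am = 1$, which rescales $w^m$ into $u^m/x$ and exposes the bounded variable $u$; once this rewriting is in place, the exponential smallness of $\abs{u - u'}$ trivially absorbs the apparent irregular singularities $1/x$ and $\log x$.
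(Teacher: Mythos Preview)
Your proof is correct and supplies precisely the details the paper omits: the paper states the corollary without proof, noting only that it is a consequence of Lemma~\ref{lem: istropie monomiale} (via the identity $\Psi_{w,\pm\lambda,0}(w)=w$ obtained there). Your change of variables $u=x^{a}w$, combined with the resonance relation $am=1$ to rewrite $c_{m}w^{m}\log x=c_{m}u^{m}\log(x)/x$, is exactly the mechanism that exposes the bounded variable and makes the Lipschitz estimate absorb the apparent $1/x$ and $\log x$ singularities; this is the intended argument. One minor point: to guarantee that $u'$ stays in the domain of $\tilde{c}$ for \emph{all} $x\in S'$ (not just small $x$), it is cleaner to invoke $u'=\psi_{v,\pm\lambda}(x,\mathbf{y})\in\mathbf{D}(\mathbf{0},\mathbf{r}')$ directly rather than the bound $\abs{u'}\leq\abs{u}+A\exp(-B/\abs{x})$, but this is a routine adjustment.
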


\subsection{Power series expansion of sectorial isotropies in the space of leaves}

~

Now, we give a power series expansion of $\Psi_{1,\pm\lambda}$ and
$\Psi_{2,\pm\lambda}$ in the space of leaves. Let us introduce the
following notations:
\[
\begin{cases}
N\left(1,+\right):=N\left(2,-\right):=1\\
N\left(1,-\right):=N\left(2,+\right):=-1 & .
\end{cases}
\]

\begin{lem}
\label{lem: istropies espace des feuilles 2}With the notations and
assumptions above, there exists entire functions (\emph{i.e. }analytic
over $\ww C$) denoted by $\Psi_{j,\pm\lambda,n}$, $j\in\acc{1,2}$,
$n\geq N\left(j,\pm\right)$, such that for $j\in\acc{1,2}:$
\[
\begin{cases}
{\displaystyle \Psi_{j,\lambda}\left(h_{1},h_{2}\right)=\sum_{n\geq N\left(j,+\right)}\Psi_{j,\lambda,n}\left(h_{1}h_{2}\right)h_{1}^{n}}\\
{\displaystyle \Psi_{j,-\lambda}\left(h_{1},h_{2}\right)=\sum_{n\geq N\left(j,-\right)}\Psi_{j,\lambda,n}\left(h_{1}h_{2}\right)h_{2}^{n}} & .
\end{cases}
\]
These series converge normally in every subset of $\Gamma_{\pm\lambda}$
of the form $\lsr{\pm}{\tilde{\mathbf{r}}}$ with $0<\tilde{r}_{1}<r_{1}$
and $0<\tilde{r}_{2}<r_{2}$ (\emph{cf.} Definition\emph{ }\ref{def: espace des feuilles =0000E0 rayon fixe}).
More precisely, for all $\tilde{r}_{1},\tilde{r}_{2},\delta>0$ such
that 
\[
0<\tilde{r}_{j}+\delta<r_{j}~,~j\in\left\{ 1,2\right\} 
\]
there exists $C>0$ such that for all $x\in S_{\pm\lambda}$ and for
all $w\in\ww C$, we have: 
\begin{eqnarray*}
\abs{wx^{a}}\leq\tilde{r}_{1}\tilde{r}_{2} & \Longrightarrow & \begin{cases}
{\displaystyle \abs{\Psi_{1,\lambda,n}\left(w\right)}<Cr'_{1}\frac{\abs{f_{1}\left(x,w\right)}^{n-1}}{\left(\tilde{r}_{1}+\delta\right)^{n}}} & ,\, n\geq1\\
{\displaystyle \abs{\Psi_{2,\lambda,n}\left(w\right)}<\frac{Cr'_{2}}{\abs{x^{a}}}\frac{\abs{f_{1}\left(x,w\right)}^{n+1}}{\left(\tilde{r}_{1}+\delta\right)^{n}}} & ,\, n\geq-1\\
{\displaystyle \abs{\Psi_{1,-\lambda,n}\left(w\right)}<\frac{Cr'_{1}}{\abs{x^{a}}}\frac{\abs{f_{2}\left(x,w\right)}^{n+1}}{\left(\tilde{r}_{2}+\delta\right)^{n}}} & ,\, n\geq-1\\
{\displaystyle \abs{\Psi_{2,-\lambda,n}\left(w\right)}<Cr'_{2}\frac{\abs{f_{2}\left(x,w\right)}^{n-1}}{\left(\tilde{r}_{2}+\delta\right)^{n}}} & ,\, n\geq1\,\,.
\end{cases}
\end{eqnarray*}
Moreover: 
\[
\Psi_{1,-\lambda,-1}\left(0\right)=\Psi_{2,\lambda,-1}\left(0\right)=0\,\,.
\]
\end{lem}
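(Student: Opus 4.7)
The strategy is to adapt, component by component, the Laurent expansion argument from Lemma~\ref{lem: isotropie monomone res esp des feuilles}, the new ingredient being the \emph{componentwise} constraint $\Psi_\lambda(\Gamma_\lambda(x,\mathbf{r})) \subset \Gamma_\lambda(x,\mathbf{r}')$ (rather than only the constraint on the product $\Psi_w = \Psi_{1,\lambda}\Psi_{2,\lambda}$). By the evident symmetry $h_1 \leftrightarrow h_2$, $f_1 \leftrightarrow f_2$, $\lambda \leftrightarrow -\lambda$, it suffices to treat $\Psi_{1,\lambda}$ and $\Psi_{2,\lambda}$.

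First I would fix $w \in \wc$, restrict each $\Psi_{j,\lambda}$ to the level set $\acc{h_1 h_2 = w}$, and Laurent expand $h_1 \mapsto \Psi_{j,\lambda}(h_1, w/h_1)$ on the annulus $\Omega_{x,w}$ introduced in the proof of Lemma~\ref{lem: isotropie monomone res esp des feuilles}. On the circle of radius $\rho(x,w) = (\tilde r_1 + \delta)/\abs{f_1(x,w)}$, the condition $\Psi_\lambda(\Gamma_\lambda(x,\mathbf{r})) \subset \Gamma_\lambda(x,\mathbf{r}')$ supplies the pointwise bounds
\[
\abs{\Psi_{1,\lambda}(h_1,h_2)} \leq \frac{r'_1}{\abs{f_1(x,\Psi_w)}},\qquad \abs{\Psi_{2,\lambda}(h_1,h_2)} \leq \frac{r'_2}{\abs{f_2(x,\Psi_w)}}.
\]
The key step is then to invoke Corollary~\ref{cor: quantit=0000E9 born=0000E9e} (applied to $\Psi_\lambda$ and, by the same argument, to $\Psi_\lambda^{-1}$), which yields a constant $C>0$ controlling $\abs{f_j(x,\Psi_w)}$ by $\abs{f_j(x,w)}$ from both sides. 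Combined with $f_1 f_2 = x^a$, this upgrades the bounds above to
\[
\abs{\Psi_{1,\lambda}(h_1,h_2)} \leq \frac{Cr'_1}{\abs{f_1(x,w)}},\qquad \abs{\Psi_{2,\lambda}(h_1,h_2)} \leq \frac{Cr'_2\,\abs{f_1(x,w)}}{\abs{x^a}},
\]
whereupon Cauchy's formula on the circle of radius $\rho(x,w)$ gives exactly the estimates announced in the lemma.

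The vanishing of the low-index coefficients follows as in Lemma~\ref{lem: isotropie monomone res esp des feuilles}, by letting $x \to 0$ along $S_\lambda$ with $w$ fixed: since $\abs{f_1(x,w)}$ grows exponentially while $\abs{x^a}^{-1}$ grows only polynomially, the bound for $\abs{\Psi_{1,\lambda,n}(w)}$ tends to $0$ whenever $n \leq 0$ (because $\abs{f_1}^{n-1} \to 0$), and the bound for $\abs{\Psi_{2,\lambda,n}(w)}$ tends to $0$ whenever $n \leq -2$ (the exponential decay of $\abs{f_1}^{n+1}$ dominates the polynomial blow-up of $\abs{x^a}^{-1}$). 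This forces $\Psi_{1,\lambda,n}\equiv 0$ for $n \leq 0$ and $\Psi_{2,\lambda,n}\equiv 0$ for $n \leq -2$, giving exactly the starting indices $N(1,+) = 1$ and $N(2,+) = -1$. The analyticity of each coefficient as an entire function of $w$, and the normal convergence of the series on $\lsr{}{\tilde{\mathbf{r}}}$, are obtained verbatim from the same ``integral with parameter'' argument as in the previous lemma.

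Finally, for $\Psi_{2,\lambda,-1}(0) = 0$ and $\Psi_{1,-\lambda,-1}(0) = 0$ I would use the invariance of the coordinate hyperplanes. The sets $\acc{y_1 = 0}$ and $\acc{y_2 = 0}$ are $\ynorm$-invariant, hence preserved by any isotropy $\psi_{\pm\lambda}$; via $\cal H_{\pm\lambda}$ this translates into invariance of $\acc{h_1 = 0}$ under $\Psi_{-\lambda}$ and of $\acc{h_2 = 0}$ under $\Psi_\lambda$. Setting $h_2 = 0$ in the expansion of $\Psi_{2,\lambda}$ gives $\Psi_{2,\lambda}(h_1,0) = \sum_{n \geq -1} \Psi_{2,\lambda,n}(0) h_1^n$, which must vanish identically; hence every $\Psi_{2,\lambda,n}(0)$ is zero, and symmetrically $\Psi_{1,-\lambda,-1}(0) = 0$. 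The main obstacle in this whole scheme is the first step: without a \emph{bilateral} control $\abs{f_j(x,\Psi_w)} \asymp \abs{f_j(x,w)}$ supplied by Corollary~\ref{cor: quantit=0000E9 born=0000E9e}, the Cauchy bounds would depend on $\Psi$ in an uncontrolled exponential way, and the cascade of estimates leading to the vanishing of negative-index coefficients would collapse; once this uniform comparison is secured, the rest is careful bookkeeping.
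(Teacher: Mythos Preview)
Your overall strategy---Laurent expansion on the annuli $\Omega_{x,w}$, Cauchy estimates on the circle of radius $\rho(x,w)=(\tilde r_1+\delta)/\abs{f_1(x,w)}$, use of Corollary~\ref{cor: quantit=0000E9 born=0000E9e} to replace $f_j(x,\Psi_w)$ by $f_j(x,w)$ up to a constant, and then letting $x\to 0$ in $S_\lambda$ to kill the low-index coefficients---is exactly the paper's approach and is carried out correctly. Your remark that one needs a \emph{bilateral} comparison $\abs{f_j(x,\Psi_w)}\asymp\abs{f_j(x,w)}$ is well observed; it follows at once from Lemma~\ref{lem: istropie monomiale} since $\Psi_w-w$ is exponentially small, so both ratios are bounded.

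However, your argument for $\Psi_{2,\lambda,-1}(0)=0$ (and $\Psi_{1,-\lambda,-1}(0)=0$) is wrong. You claim that the hypersurfaces $\acc{y_j=0}$, being $\ynorm$-invariant, are automatically preserved by any isotropy $\psi_{\pm\lambda}$. This is a non sequitur: an isotropy of a vector field permutes its leaves but need not fix any particular leaf. In fact (see the Remark after Lemma~\ref{lem: isotropies exp plates} and Section~5.2), the constant term of $\psi_{2,\lambda}$ in $\mathbf{y}$ is $\Psi_{2,\lambda,0}(0)\,f_2(x,0)$, which is generically nonzero; your argument would force \emph{all} $\Psi_{2,\lambda,n}(0)$ to vanish, which by Proposition~5.2 would mean the invariant hypersurface $\mathscr{H}_2$ is always convergent---and that is false in general.

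The paper's argument is different and purely local-analytic: for fixed $h_2$, the map $h_1\mapsto\Psi_{2,\lambda}(h_1,h_2)$ is known \emph{a priori} to be analytic in a full neighbourhood of $h_1=0$ (since $(0,h_2)\in\Gamma_\lambda$ and $\Psi_\lambda$ is a biholomorphism of $\Gamma_\lambda$), while its Laurent expansion contains the term $\Psi_{2,\lambda,-1}(h_1h_2)\,h_1^{-1}$. As $h_1\to 0$ with $h_2$ fixed, the argument $h_1h_2\to 0$, so boundedness at $h_1=0$ forces $\Psi_{2,\lambda,-1}(0)=0$. Note that this gives only the vanishing at $w=0$, not the stronger (and false) conclusion you reached.
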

\begin{proof}
We use the same notations as in the proof of Lemma \ref{lem: isotropie monomone res esp des feuilles},
and as usual, we give the proof only for $\Psi_{\lambda}$ (the proof
for $\Psi_{-\lambda}$ is analogous, by exchanging the role played
by $h_{1}$ and $h_{2}$). For fixed $w\in\ww C$, the maps
\[
\varphi_{1}:h_{1}\mapsto\Psi_{1,\lambda}\left(h_{1},\frac{w}{h_{1}}\right)
\]
and 
\[
\varphi_{2}:h_{1}\mapsto\Psi_{2,\lambda}\left(h_{1},\frac{w}{h_{1}}\right)
\]
are analytic in 
\[
M_{w,1}=\bigcup_{\substack{x\in S_{\lambda}\\
\abs{wx^{a}}<r_{1}r_{2}
}
}\Omega_{x,w}
\]
(see the proof of Lemma \ref{lem: isotropie monomone res esp des feuilles}).
In particular, $\varphi_{1}$ and $\varphi_{2}$ admit Laurent expansions
\[
\begin{cases}
{\displaystyle \varphi_{1}\left(h_{1}\right)=\Psi_{1,\lambda}\left(h_{1},\frac{w}{h_{1}}\right)=\sum_{n\geq-L_{1}}\Psi_{1,\lambda,n}\left(w\right)h_{1}^{n}}\\
{\displaystyle \varphi_{2}\left(h_{1}\right)=\Psi_{2,\lambda}\left(h_{1},\frac{w}{h_{1}}\right)=\sum_{n\geq-L_{2}}\Psi_{2,\lambda,n}\left(w\right)h_{1}^{n}}
\end{cases}
\]
in every annulus $\Omega_{x,w}$, with $x\in S_{\lambda}$ such that
$\abs{wx^{a}}<r_{1}r_{2}$. Using the same method as in the proof
of Lemma \ref{lem: isotropie monomone res esp des feuilles}, we prove
without additional difficulties that for all $n\in\ww Z$, $\Psi_{1,\lambda,n}$
and $\Psi_{2,\lambda,n}$ are analytic in any point $w\in\ww C$,
and thus are entire functions (\emph{i.e.} analytic over $\wc$).
Moreover, we also show in the same way as earlier that for all $\tilde{r}_{1},\tilde{r}_{2},\delta>0$
with 
\[
0<\tilde{r}_{j}+\delta<r_{j}~,~j\in\left\{ 1,2\right\} \,\,,
\]
for all $n\in\ww Z$ and for all $\left(x,w\right)\in S_{\lambda}\times\ww C$
such that $\abs{wx^{a}}\leq\tilde{r}_{1}\tilde{r}_{2}$, we have:
\[
\begin{cases}
{\displaystyle \abs{\Psi_{1,\lambda,n}\left(w\right)}<\frac{r'_{1}}{\abs{f_{1}\left(x,\Psi_{w,\lambda}\left(x,h_{1},\frac{w}{h_{1}}\right)\right)}}\abs{\frac{f_{1}\left(x,w\right)}{\tilde{r}_{1}+\delta}}^{n}}\\
{\displaystyle \abs{\Psi_{2,\lambda,n}\left(w\right)}<\frac{r'_{2}}{\abs{f_{2}\left(x,\Psi_{w,\lambda}\left(x,h_{1},\frac{w}{h_{1}}\right)\right)}}\abs{\frac{f_{1}\left(x,w\right)}{\tilde{r}_{1}+\delta}}^{n}} & \,.
\end{cases}
\]
According to Corollary \ref{cor: quantit=0000E9 born=0000E9e}, there
exists $C>0$ such that for all $\left(x,w\right)\in S_{\lambda}\times\ww C$
with $\abs{wx^{a}}\leq\tilde{r}_{1}\tilde{r}_{2}$, we have: 
\[
\begin{cases}
{\displaystyle \abs{\Psi_{1,\lambda,n}\left(w\right)}<Cr'_{1}\frac{\abs{f_{1}\left(x,w\right)}^{n-1}}{\left(\tilde{r}_{1}+\delta\right)^{n}}}\\
{\displaystyle \abs{\Psi_{2,\lambda,n}\left(w\right)}<\frac{Cr'_{2}}{\abs{x^{a}}}\frac{\abs{f_{1}\left(x,w\right)}^{n+1}}{\left(\tilde{r}_{1}+\delta\right)^{n}}} & \,.
\end{cases}
\]
According to the statement in Fact \ref{fact: limites}, for a fixed
value $w\in\ww C$, if we look at the limit as $x$ tends to $0$
in $S_{\lambda}$ of the right hand-sides above we deduce that: 
\[
\begin{cases}
\abs{\Psi_{1,\lambda,n}\left(w\right)}=0 & ,\,\forall n\leq0\\
\abs{\Psi_{2,\lambda,n}\left(w\right)}=0 & ,\,\forall n\leq-2\,.
\end{cases}
\]
Consequently: 
\[
\begin{cases}
{\displaystyle \Psi_{1,\lambda}\left(h_{1},h_{2}\right)=\sum_{n\geq1}\Psi_{1,\lambda,n}\left(h_{1}h_{2}\right)h_{1}^{n}}\\
{\displaystyle \Psi_{2,\lambda}\left(h_{1},h_{2}\right)=\sum_{n\geq-1}\Psi_{2,\lambda,n}\left(h_{1}h_{2}\right)h_{1}^{n}} & \,\,.
\end{cases}
\]
These function series converges normally (and are analytic) in every
domain of the form$\lsr{}{\tilde{\mathbf{r}}}$ with $\tilde{\mathbf{r}}:=\left(\tilde{r}_{1},\tilde{r}_{2}\right)$
and 
\[
0<\tilde{r}_{j}+\delta<r_{j}~,~j\in\left\{ 1,2\right\} \,\,
\]
(\emph{cf.} Definition\emph{ }\ref{def: espace des feuilles =0000E0 rayon fixe}).
Moreover, for any fixed value of $h_{2}$, on the one hand the function
series 
\[
h_{1}\mapsto\Psi_{2,\lambda}\left(h_{1},h_{2}\right)=\sum_{n\geq-1}\Psi_{2,\lambda,n}\left(h_{1}h_{2}\right)h_{1}^{n}
\]
is analytic in a punctured disc, since 
\begin{eqnarray*}
{\displaystyle \abs{f_{2}\left(x,h_{1},h_{2}\right)}} & \underset{\substack{x\rightarrow0\\
x\in S_{\lambda}
}
}{\longrightarrow} & 0\,,
\end{eqnarray*}
and on the other hand, we already know that the function $h_{1}\mapsto\Psi_{2,\lambda}\left(h_{1},h_{2}\right)$
is analytic in a neighborhood of the origin. Thus, we must have $\Psi_{2,\lambda,-1}\left(0\right)=0$.
\end{proof}

\subsection{Sectorial isotropies: proof of Proposition \ref{prop: isotropies plates}}

~

The following lemma is a more precise version of Proposition \ref{prop: isotropies plates}.
We recall the notations:
\[
\begin{cases}
N\left(1,+\right)=N\left(2,-\right)=1\\
N\left(1,-\right)=N\left(2,+\right)=-1 & .
\end{cases}
\]

\begin{lem}
\label{lem: isotropies exp plates}With the notations and assumptions
above, we consider $\psi_{\pm\lambda}\in\Lambda_{\pm\lambda}^{\left(\tx{weak}\right)}\left(\ynorm\right)$,
with
\begin{eqnarray*}
\psi_{\pm\lambda}\left(x,\mathbf{y}\right) & = & \left(x,\psi_{1,\pm\lambda}\left(x,\mathbf{y}\right),\psi_{2,\pm\lambda}\left(x,\mathbf{y}\right)\right)\,\,.
\end{eqnarray*}
 Then, for $i\in\acc{1,2}$, $\psi_{i,\lambda}$ and $\psi_{i,-\lambda}$
can be written as power series as follows: 
\[
\begin{cases}
{\displaystyle \psi_{i,\lambda}\left(x,\mathbf{y}\right)=y_{i}+f_{i}\left(x,\frac{\psi_{v,\lambda}\left(x,\mathbf{y}\right)}{x^{a}}\right)\sum_{n\ge N\left(i,+\right)+1}\Psi_{i,\lambda,n}\left(\frac{y_{1}y_{2}}{x^{a}}\right)\left(\frac{y_{1}}{f_{1}\left(x,\frac{y_{1}y_{2}}{x^{a}}\right)}\right)^{n}}\\
{\displaystyle \psi_{i,-\lambda}\left(x,\mathbf{y}\right)=y_{i}+f_{i}\left(x,\frac{\psi_{v,-\lambda}\left(x,\mathbf{y}\right)}{x^{a}}\right)\sum_{n\ge N\left(i,-\right)+1}\Psi_{i,-\lambda,n}\left(\frac{y_{1}y_{2}}{x^{a}}\right)\left(\frac{y_{2}}{f_{2}\left(x,\frac{y_{1}y_{2}}{x^{a}}\right)}\right)^{n}} & .
\end{cases}
\]
which are normally convergent in every subset of $S_{\pm\lambda}\times\mathbf{D\left(0,r\right)}$
of the form $S_{\pm\lambda}\times\mathbf{\overline{D}\left(0,\tilde{r}\right)},$
where $\mathbf{\overline{D}\left(0,\tilde{r}\right)}$ is a closed
poly-disc with $\mathbf{\tilde{r}}=\left(\tilde{r}_{1},\tilde{r}_{2}\right)$
such that 
\[
0<\tilde{r}_{j}<r_{j}~~~~,~j\in\left\{ 1,2\right\} ~.
\]
Here $\Psi_{i,\lambda,n}$ , $\Psi_{i,-\lambda,n}$ (for $i=1,2$
and $n\in\ww N$) are given in Lemma \ref{lem: istropies espace des feuilles 2}.
Moreover, for all closed sub-sector $S'\subset S_{\pm\lambda}$ and
for all closed poly-disc $\mathbf{\overline{D}}\subset\mathbf{D\left(0,r\right)}$,
there exists $A,B>0$ such that for $j=1,2$:
\begin{eqnarray*}
\abs{\psi_{j,\pm\lambda}\left(x,y_{1},y_{2}\right)-y_{j}} & \leq & A\exp\left(-\frac{B}{\abs x}\right)\,\,,\,\,\forall\left(x,\mathbf{y}\right)\in S'\times\mathbf{\overline{D}}\,\,.
\end{eqnarray*}
As a consequence, $\psi_{j,\pm\lambda}$ admits $y_{j}$ as Gevrey-1
asymptotic expansion in $S_{\pm\lambda}\times\mathbf{D\left(0,r\right)}$.\end{lem}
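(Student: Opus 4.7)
The strategy is to extend the argument of Lemma~\ref{lem: istropie monomiale}---which treats the resonant product $\psi_{v,\pm\lambda}=\psi_{1,\pm\lambda}\psi_{2,\pm\lambda}$---to each individual component $\psi_{i,\pm\lambda}$, by combining the leaf-space conjugation $\Psi_{\pm\lambda}\circ\mathcal{H}_{\pm\lambda}=\mathcal{H}_{\pm\lambda}\circ\psi_{\pm\lambda}$ with the series expansion of Lemma~\ref{lem: istropies espace des feuilles 2} and the weak Gevrey-1 hypothesis on $\psi_{\pm\lambda}$.

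I would first write down the identity $\psi_{i,\pm\lambda}(x,\mathbf{y})=\Psi_{i,\pm\lambda}(h_{1},h_{2})\cdot f_{i}\!\left(x,\Psi_{w,\pm\lambda}(h_{1},h_{2})\right)$ using $h_{j}=y_{j}/f_{j}(x,y_{1}y_{2}/x^{a})$ and $\Psi_{w,\pm\lambda}(h_{1},h_{2})=\psi_{v,\pm\lambda}(x,\mathbf{y})/x^{a}$. Substituting the expansions of $\Psi_{i,\pm\lambda}$ supplied by Lemma~\ref{lem: istropies espace des feuilles 2} gives the announced series representation (before separating the $n=N(i,\pm)$ term). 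The crucial step is then to match this representation against the weak Gevrey-1 expansion $\psi_{i,\pm\lambda}=y_{i}+\sum_{\mathbf{k}}\psi_{i,\pm\lambda,\mathbf{k}}(x)\mathbf{y}^{\mathbf{k}}$, whose coefficients are Gevrey-1 flat. Exactly as in the proof of Lemma~\ref{lem: istropie monomiale}, this matching isolates the $n=N(i,\pm)$ contribution (after absorbing the factor $f_{i}(x,\psi_{v}/x^{a})/f_{i}(x,y_{1}y_{2}/x^{a})$, which is $1+\text{exp.\ small}$ by Lemma~\ref{lem: istropie monomiale}) as being equal to $y_{i}$: any polynomial coefficient of $\Psi_{i,\pm\lambda,N(i,\pm)}$ surviving after the match would produce a non-flat $x$-dependent monomial in the weak expansion, which is forbidden.

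Once the leading $y_{i}$ has been extracted, bounding the tail
\[
f_{i}(x,\psi_{v,\pm\lambda}/x^{a})\sum_{n\ge N(i,\pm)+1}\Psi_{i,\pm\lambda,n}(y_{1}y_{2}/x^{a})\left(\frac{y_{\star}}{f_{\star}(x,y_{1}y_{2}/x^{a})}\right)^{n}
\]
(where $y_{\star},f_{\star}$ stands for $y_{1},f_{1}$ in the $+\lambda$ case and $y_{2},f_{2}$ in the $-\lambda$ case) proceeds as in the last part of the proof of Lemma~\ref{lem: istropie monomiale}: the estimates of Lemma~\ref{lem: istropies espace des feuilles 2} applied at $\tilde{x}=2x$ produce a factor $f_{\star}(2x,\cdot)/f_{\star}(x,\cdot)$ whose modulus is bounded by $\exp(-B/|x|)$ on any proper sub-sector, the resulting geometric series is summable after shrinking the polydisc, and Corollary~\ref{cor: quantit=0000E9 born=0000E9e} controls the outer factor $f_{i}(x,\psi_{v}/x^{a})$. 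The resulting uniform bound $|\psi_{j,\pm\lambda}-y_{j}|\le A\exp(-B/|x|)$ automatically yields the Gevrey-1 asymptotic statement, because a function that is exponentially flat on a sector admits $0$ as Gevrey-1 asymptotic expansion.

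The main obstacle is the matching step, and particularly the asymmetric cases where $N(i,\pm)=-1$: here the leaf-space expansion contains a term in $h_{1}^{-1}$ (respectively $h_{2}^{-1}$) which must be reinterpreted through $f_{1}f_{2}=x^{a}$ together with the vanishing $\Psi_{j,\pm\lambda,-1}(0)=0$ from Lemma~\ref{lem: istropies espace des feuilles 2} so that it produces precisely the $y_{i}$ leading contribution (rather than a singular term). Careful bookkeeping of the exponential factor $\exp(\pm\lambda/x)$ hidden in $f_{i}$ is also required, since that factor is of maximal modulus exactly in the sector $S_{\pm\lambda}$ where we work, so the gain of $\exp(-B/|x|)$ must come entirely from comparing its values at $x$ and at $2x$.
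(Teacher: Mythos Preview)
Your proposal is correct and follows essentially the same route as the paper: pull back the conjugation identity $\Psi_{\pm\lambda}\circ\mathcal{H}_{\pm\lambda}=\mathcal{H}_{\pm\lambda}\circ\psi_{\pm\lambda}$, insert the leaf-space expansions of Lemma~\ref{lem: istropies espace des feuilles 2}, match against the weak Gevrey-1 expansion of $\psi_{i,\pm\lambda}$ to pin down the leading coefficient $\Psi_{i,\pm\lambda,N(i,\pm)}$, and then bound the tail via the ``evaluate the estimates at $2x$'' trick together with Corollary~\ref{cor: quantit=0000E9 born=0000E9e}, exactly as in Lemma~\ref{lem: istropie monomiale}.

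One point of precision worth noting: in the matching step the paper does not rely on the ratio $f_i(x,\psi_v/x^a)/f_i(x,y_1y_2/x^a)$ being merely ``$1+\text{exp.\ small}$'' in magnitude, but rather on its monomial structure, namely that it equals $1$ plus terms whose $(y_1,y_2)$-exponents satisfy $k_1>k_2$ (in the $+\lambda$ case). This is what guarantees that the coefficient of $y_1^{k+1}y_2^k$ in $\psi_{1,\lambda}$ comes \emph{only} from the $n=1$ contribution, allowing one to read off $\Psi_{1,\lambda,1}\equiv 1$ from the flatness of each $\psi_{1,\lambda,(k+1,k)}(x)$. Your phrasing hints at this (``non-flat $x$-dependent monomial in the weak expansion''), but the argument is coefficient-by-coefficient in $\mathbf{y}$ rather than a global magnitude bound, since the global exponential smallness of $\psi_{1,\lambda}-y_1$ is precisely what is being proved. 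Your treatment of the asymmetric cases $N(i,\pm)=-1$ via $f_1f_2=x^a$ and $\Psi_{j,\pm\lambda,-1}(0)=0$ is exactly right and indeed yields $\Psi_{2,\lambda,-1}(w)=\Psi_{1,-\lambda,-1}(w)=w$, which the paper records in the Remark following the lemma but only sketches in the proof.
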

\begin{rem}
In particular, $\Psi_{1,\lambda,1}\left(w\right)=\Psi_{2,-\lambda,1}\left(w\right)=1$
and $\Psi_{1,-\lambda,-1}\left(w\right)=\Psi_{2,\lambda,-1}\left(w\right)=w$.\end{rem}
\begin{proof}
By definition, we have 
\[
\Psi_{\pm\lambda}\circ\cal H_{\pm\lambda}=\cal H_{\pm\lambda}\circ\psi_{\pm}\,.
\]
In particular, for $j=1,2$ and all $\left(x,\mathbf{y}\right)\in S_{\pm\lambda}\times\mathbf{D\left(0,r\right)}$:
\begin{eqnarray*}
\Psi_{j,\pm\lambda}\left(x,\frac{y_{1}}{f_{1}\left(x,\frac{y_{1}y_{2}}{x^{a}}\right)},\frac{y_{2}}{f_{2}\left(x,\frac{y_{1}y_{2}}{x^{a}}\right)}\right) & = & \frac{\psi_{j,\pm\lambda}\left(x,y_{1},y_{2}\right)}{f_{j}\left(x,\frac{\psi_{v,\pm}\left(x,y_{1},y_{2}\right)}{x^{a}}\right)}\,\,.
\end{eqnarray*}
Thus, according to Lemma \ref{lem: istropies espace des feuilles 2}
we have for $i=1,2$: 
\begin{equation}
\begin{cases}
{\displaystyle \psi_{i,\lambda}\left(x,\mathbf{y}\right)=f_{i}\left(x,\frac{\psi_{v,\lambda}\left(x,\mathbf{y}\right)}{x^{a}}\right)\sum_{n\ge N\left(i,+\right)}\Psi_{i,\lambda,n}\left(\frac{y_{1}y_{2}}{x^{a}}\right)\left(\frac{y_{1}}{f_{1}\left(x,\frac{y_{1}y_{2}}{x^{a}}\right)}\right)^{n}}\\
{\displaystyle \psi_{i,-}\left(x,\mathbf{y}\right)=f_{i}\left(x,\frac{\psi_{v,-\lambda}\left(x,\mathbf{y}\right)}{x^{a}}\right)\sum_{n\ge N\left(i,-\right)}\Psi_{i,-\lambda,n}\left(\frac{y_{1}y_{2}}{x^{a}}\right)\left(\frac{y_{2}}{f_{2}\left(x,\frac{y_{1}y_{2}}{x^{a}}\right)}\right)^{n}} & ,
\end{cases}\label{eq:psi et Psi}
\end{equation}
and these series are normally convergent (and then define analytic
functions) in any domain of the form $S'\times\mathbf{\overline{D}\left(0,\tilde{r}\right)},$
where $S'$ is a closed sub-sector of $S_{\pm\lambda}$ and $\mathbf{\overline{D}\left(0,\tilde{r}\right)}$
is a closed poly-disc with $\mathbf{\tilde{r}}=\left(\tilde{r}_{1},\tilde{r}_{2}\right)$
such that 
\[
0<\tilde{r}_{j}<r_{j}~~~~,j\in\left\{ 1,2\right\} ~.
\]
Let us compare the different expressions of $\psi_{j,\pm\lambda}$,
$j=1,2$. We know that $\psi_{j,\pm\lambda}\left(x,y_{1},y_{2}\right)$
admits $y_{j}$ as weak Gevrey-1 asymptotic expansion in $S_{\pm\lambda}\times\mathbf{D\left(0,r\right)}$.
Thus, we can write:
\begin{eqnarray*}
\psi_{j,\pm\lambda}\left(x,y_{1},y_{2}\right) & = & y_{j}+\sum_{\mathbf{k}\in\ww N^{2}}\psi_{j,\pm\lambda,\mathbf{k}}\left(x\right)\mathbf{y^{k}}\,\,,
\end{eqnarray*}
where for all $\mathbf{k}=\left(k_{1},k_{2}\right)\in\ww N^{2}$,
$\psi_{j,\pm\lambda,\mathbf{k}}$ is analytic in $S_{\pm\lambda}$
and admits $0$ as Gevrey-1 asymptotic expansion. As usual, let us
deal with the case of $\psi_{1,\lambda}$ and $\psi_{2,\lambda}$
(the other one being similar by exchanging $y_{1}$ and $y_{2}$).

According to the expressions given by Lemmas \ref{lem: isotropie monomone res esp des feuilles}
and \ref{lem: istropies espace des feuilles 2}, we can be more precise
on the index sets in the sums above:
\begin{equation}
\begin{cases}
{\displaystyle \psi_{1,\lambda}\left(x,y_{1},y_{2}\right)=y_{1}+\sum_{\substack{\mathbf{k}=\left(k_{1},k_{2}\right)\in\ww N^{2}\\
k_{1}\geq k_{2}+1
}
}\psi_{1,\lambda,\mathbf{k}}\left(x\right)y_{1}^{k_{1}}y_{2}^{k_{2}}}\\
\psi_{2,\lambda}\left(x,y_{1},y_{2}\right)=y_{2}+\sum_{\substack{\mathbf{k}=\left(k_{1},k_{2}\right)\in\ww N^{2}\\
k_{1}\geq k_{2}
}
}\psi_{2,\lambda,\mathbf{k}}\left(x\right)y_{1}^{k_{1}}y_{2}^{k_{2}} & {\displaystyle .}
\end{cases}\label{eq: psi_1 et psi_2}
\end{equation}
 Let us deal with $\psi_{1,\lambda}$ (a similar proof holds for $\psi_{2,\lambda}$).
Looking at terms for $n=1$ in (\ref{eq:psi et Psi}) and at monomials
terms $\mathbf{y^{k}}$ such that $k_{1}\leq k_{2}+1$ in (\ref{eq: psi_1 et psi_2}),
we must have for all $x\in S_{\lambda}$, $y_{1},y_{2}\in\ww C$ with
$\abs{y_{1}}<r_{1}$, $\abs{y_{2}}<r_{2}$:
\begin{eqnarray*}
1+\sum_{k\ge0}\psi_{1,\lambda,\left(k+1,k\right)}\left(x\right)y_{1}^{k}y_{2}^{k} & = & \frac{f_{1}\left(x,\frac{\psi_{v,\lambda}\left(x,\mathbf{y}\right)}{x^{a}}\right)}{f_{1}\left(x,\frac{y_{1}y_{2}}{x^{z}}\right)}\Psi_{1,\lambda,1}\left(\frac{y_{1}y_{2}}{x^{a}}\right)\,\,.
\end{eqnarray*}
According to Lemma \ref{lem: istropie monomiale} and Corollary\ref{cor: quantit=0000E9 born=0000E9e},
we have: 
\begin{eqnarray*}
\frac{f_{1}\left(x,\frac{\psi_{v,\lambda}\left(x,\mathbf{y}\right)}{x^{a}}\right)}{f_{1}\left(x,\frac{y_{1}y_{2}}{x^{z}}\right)} & = & 1+\sum_{\substack{j_{1}\geq j_{2}+1\geq1}
}F_{j_{1},j_{2}}\left(x\right)y_{1}^{j_{1}}y_{2}^{j_{2}}\\
 & = & 1+\underset{\substack{\left(x,\mathbf{y}\right)\longrightarrow0\\
\left(x,\mathbf{y}\right)\in S_{\lambda}\times\mathbf{D\left(0,r\right)}
}
}{\tx O}\left(\abs{y_{1}}\right)\,\,,
\end{eqnarray*}
for some analytic and bounded functions $F_{j_{1},j_{2}}\left(x\right)$,
$j_{1}\ge j_{2}$. As in the proof of Lemma \ref{lem: istropie monomiale},
using the fact that $\psi_{\lambda}$ admits the identity as weak
Gevrey-1 asymptotic expansion, we deduce that $\Psi_{1,\lambda,1}\left(w\right)=1$,
and then:
\begin{eqnarray*}
\psi_{1,\lambda}\left(x,\mathbf{y}\right) & = & y_{1}+f_{1}\left(x,\frac{\psi_{v,\lambda}\left(x,\mathbf{y}\right)}{x^{a}}\right)\sum_{n\ge2}\Psi_{1,\lambda,n}\left(\frac{y_{1}y_{2}}{x^{a}}\right)\left(\frac{y_{1}}{f_{1}\left(x,\frac{y_{1}y_{2}}{x^{a}}\right)}\right)^{n}\\
 & = & y_{1}+\sum_{\substack{\mathbf{k}=\left(k_{1},k_{2}\right)\in\ww N^{2}\\
k_{1}\geq k_{2}+2
}
}\psi_{1,\lambda,\mathbf{k}}\left(x\right)y_{1}^{k_{1}}y_{2}^{k_{2}}\,\,.
\end{eqnarray*}
It remains to show that $\psi_{1,\lambda}$ admits $y_{1}$ as Gevrey-1
asymptotic expansion in $S_{\lambda}\times\mathbf{D\left(0,r\right)}$.
From the computations above, we deduce: 
\begin{eqnarray*}
\abs{\psi_{1,\lambda}\left(x,y_{1},y_{2}\right)-y_{1}} & \leq & \sum_{n\geq2}\abs{\Psi_{1,\lambda,n}\left(\frac{y_{1}y_{2}}{x^{a}}\right)\left(\frac{y_{1}}{f_{1}\left(x,\frac{y_{1}y_{2}}{x^{a}}\right)}\right)^{n-1}\frac{f_{1}\left(x,\frac{\psi_{v,\lambda}\left(x,\mathbf{y}\right)}{x^{a}}\right)}{f_{1}\left(x,\frac{y_{1}y_{2}}{x^{a}}\right)}y_{1}}\,\,.
\end{eqnarray*}
Using Lemma \ref{lem: istropies espace des feuilles 2}, Corollary
\ref{cor: quantit=0000E9 born=0000E9e} and the same method as at
the end of the proof of Lemma \ref{lem: istropie monomiale}, we can
show the following: we can take $r_{1},r_{2}>0$ small enough such
that for all closed sub-sector $S'$ of $S_{\lambda}$ for all $\tilde{r}_{1}\in\left]0,r_{1}\right[$
and $\tilde{r}_{2}\in\left]0,r_{2}\right[$, there exists $A,B>0$
satisfying:
\begin{eqnarray*}
\left(x,y_{1},y_{2}\right)\in S'\times\mathbf{D\left(0,\tilde{r}\right)} & \Longrightarrow & \abs{\psi_{1,\lambda}\left(x,y_{1},y_{2}\right)-y_{1}}\leq A\exp\left(-\frac{B}{\abs x}\right)\,\,.
\end{eqnarray*}

A similar proof holds for $\psi_{2,\lambda},\psi_{2,-\lambda}$ and
$\psi_{1,-\lambda}$.\end{proof}
\begin{rem}
It should be noticed that in the expressions 
\[
\begin{cases}
{\displaystyle \psi_{1,\lambda}\left(x,\mathbf{y}\right)=y_{1}+f_{1}\left(x,\frac{\psi_{v,\lambda}\left(x,\mathbf{y}\right)}{x^{a}}\right)\sum_{n\ge2}\Psi_{1,\lambda,n}\left(\frac{y_{1}y_{2}}{x^{a}}\right)\left(\frac{y_{1}}{f_{1}\left(x,\frac{y_{1}y_{2}}{x^{a}}\right)}\right)^{n}}\\
{\displaystyle \psi_{1,-\lambda}\left(x,\mathbf{y}\right)=y_{1}+f_{1}\left(x,\frac{\psi_{v,-\lambda}\left(x,\mathbf{y}\right)}{x^{a}}\right)\sum_{n\ge0}\Psi_{1,-\lambda,n}\left(\frac{y_{1}y_{2}}{x^{a}}\right)\left(\frac{y_{2}}{f_{2}\left(x,\frac{y_{1}y_{2}}{x^{a}}\right)}\right)^{n}}
\end{cases}
\]
given by Lemma \ref{lem: isotropies exp plates}, the expansion of
$\psi_{1,\lambda}$ with respect to $\mathbf{y}=\left(y_{1},y_{2}\right)$
starts with a term of order $1$, namely $y_{1}$, followed by terms
of order at least $2$, while in the expansion of $\psi_{1,-\lambda}$,
the term of lowest order is a constant, namely $\Psi_{1,-\lambda,0}\left(0\right)$.
Similarly, the expansion of $\psi_{2,-\lambda}$ (with respect to
$\mathbf{y}=\left(y_{1},y_{2}\right)$) starts with $y_{2}$, while
the expansion of $\psi_{1,-\lambda}$ starts with the constant $\Psi_{2,\lambda,0}\left(0\right)$.
\end{rem}

\section{Description of the moduli space and some applications}

From Lemmas \ref{lem: istropies espace des feuilles 2} and \ref{lem: isotropies exp plates},
we can give a description of the moduli space $\Lambda_{\lambda}\left(\ynorm\right)\times\Lambda_{-\lambda}\left(\ynorm\right)$
of a fixed analytic normal form $\ynorm$.

\subsection{\label{sub: description moduli space}A power series presentation
of the moduli space}

~

We use the notations introduced in section \ref{sec:Sectorial-isotropies-and}.
We denote by $\cal O\left(\text{\ensuremath{\ww C}}\right)$ the set
of entire functions, \emph{i.e.} of functions holomorphic in $\ww C$.
We consider the functions $f_{1}$ and $f_{2}$ defined in $\left(\mbox{\ref{eq: f1 et f2}}\right)$
and introduce four subsets of $\left(\cal O\left(\ww C\right)\right)^{\ww N}$,
denoted by $\cal E_{1,\lambda}\left(\ynorm\right)$, $\cal E_{2,\lambda}\left(\ynorm\right)$,
$\cal E_{1,-\lambda}\left(\ynorm\right)$ and $\cal E_{2,-\lambda}\left(\ynorm\right)$,
defined as follows. On remind the notations 
\[
\begin{cases}
N\left(1,+\right)=N\left(2,-\right)=1\\
N\left(1,-\right)=N\left(2,+\right)=-1 & .
\end{cases}
\]

\begin{defn}
For $j\in\acc{1,2}$, a sequence $\left(\psi_{n}\left(w\right)\right)_{n\geq N\left(j,\pm\right)+1}\in\left(\cal O\left(\ww C\right)\right)^{\ww N}$
belongs to $\cal E_{j,\pm\lambda}\left(\ynorm\right)$ if there exists
an open polydisc $\mathbf{D\left(0,r\right)}$ and an open asymptotic
sector 
\begin{eqnarray*}
S_{\pm\lambda} & \in & \cal{AS}_{\arg\left(\pm\lambda\right),2\pi}
\end{eqnarray*}
such that for all $\tilde{r}_{1},\tilde{r}_{2},\delta>0$ with 
\[
0<\tilde{r}_{i}+\delta<r_{i}~~~~,~i\in\left\{ 1,2\right\} 
\]
there exists $C>0$ such that for all $x\in S_{\lambda}$ (\emph{resp.
$x\in S_{-\lambda}$}) and for all $w\in\ww C$:
\begin{eqnarray*}
\abs{wx^{a}}\leq\tilde{r}_{1}\tilde{r}_{2} & \Longrightarrow & \begin{cases}
{\displaystyle \abs{\psi_{n}\left(w\right)}<C\frac{\abs{f_{1}\left(x,w\right)}^{n-1}}{\left(\tilde{r}_{1}+\delta\right)^{n}}\mbox{ , }\forall n\geq2,} & {\displaystyle \mbox{ if }\left(\psi_{n}\left(w\right)\right)_{n\geq2}\in\cal E_{1,\lambda}\left(\ynorm\right)}\\
{\displaystyle \abs{\psi_{n}\left(w\right)}<\frac{C}{\abs{x^{a}}}\frac{\abs{f_{1}\left(x,w\right)}^{n+1}}{\left(\tilde{r}_{1}+\delta\right)^{n}}\mbox{ , }\forall n\geq0,} & {\displaystyle \mbox{ if }\left(\psi_{n}\left(w\right)\right)_{n\geq2}\in\cal E_{2,\lambda}\left(\ynorm\right)}\\
{\displaystyle \abs{\psi_{n}\left(w\right)}<\frac{C}{\abs{x^{a}}}\frac{\abs{f_{2}\left(x,w\right)}^{n+1}}{\left(\tilde{r}_{2}+\delta\right)^{n}}\mbox{ , }\forall n\geq0,} & {\displaystyle \mbox{ if }\left(\psi_{n}\left(w\right)\right)_{n\geq2}\in\cal E_{1,-\lambda}\left(\ynorm\right)}\\
{\displaystyle \abs{\psi_{n}\left(w\right)}<C\frac{\abs{f_{2}\left(x,w\right)}^{n-1}}{\left(\tilde{r}_{2}+\delta\right)^{n}}\mbox{ , }\forall n\geq2,} & {\displaystyle \mbox{ if }\left(\psi_{n}\left(w\right)\right)_{n\geq2}\in\cal E_{2,-\lambda}\left(\ynorm\right)\,\,.}
\end{cases}
\end{eqnarray*}

\end{defn}
As explained in section \ref{sec:Sectorial-isotropies-and}, we can
associate to any pair 
\[
\left(\psi_{\lambda},\psi_{-\lambda}\right)\in\Lambda_{\lambda}\left(\ynorm\right)\times\Lambda_{-\lambda}\left(\ynorm\right)
\]
two germs of sectorial biholomorphisms of the space of leaves corresponding
to each ``narrow'' sector, which we denote by $\Psi_{\lambda}$
and $\Psi_{-\lambda}$, defined by: 
\begin{equation}
\Psi_{\pm\lambda}:=\cal H_{\pm\lambda}\circ\psi_{\pm\lambda}\circ\cal H_{\pm\lambda}^{-1}\,\,,\label{eq: isotropie dans espace des feuilles}
\end{equation}
where $\cal H_{\pm\lambda}$ is given by Corollary \ref{cor: coordonn=0000E9es espace des feuiles}.
According to Lemmas \ref{lem: istropies espace des feuilles 2} and
\ref{lem: isotropies exp plates}, if we write $\Psi_{\pm\lambda}=\left(x,\Psi_{1,\pm\lambda},\Psi_{2,\pm\lambda}\right)$,
then for $j=1,2$ we have: 
\begin{eqnarray}
{\displaystyle \Psi_{j,\lambda}\left(h_{1},h_{2}\right)} & = & h_{j}+\sum_{n\geq N\left(j,+\right)+1}\Psi_{j,\lambda,n}\left(h_{1}h_{2}\right)h_{1}^{n}\label{eq: description moduli space}\\
{\displaystyle \Psi_{j,-\lambda}\left(h_{1},h_{2}\right)} & = & h_{j}+\sum_{n\geq N\left(j,-\right)+1}\Psi_{j,-\lambda,n}\left(h_{1}h_{2}\right)h_{2}^{n}\nonumber 
\end{eqnarray}
 $\left(\Psi_{j,\pm\lambda,n}\right)_{n}\in{\cal E}_{j,\pm\lambda}$
. Conversely, given $\left(\Psi_{j,\pm\lambda}\right)_{n}\in\cal E_{j,\pm\lambda}$
for $j=1,2$, the estimates made in section \ref{sec:Sectorial-isotropies-and}
show that 
\[
\psi_{\pm\lambda}:=\cal H_{\pm\lambda}^{-1}\circ\Psi_{\pm\lambda}\circ\cal H_{\pm\lambda}~,
\]
where $\Psi_{\pm\lambda}\left(x,\mathbf{h}\right)=\left(x,\Psi_{1,\pm\lambda}\left(\mathbf{h}\right),\Psi_{2,\pm\lambda}\left(\mathbf{h}\right)\right)$,
belongs to $\Lambda_{\pm\lambda}\left(\ynorm\right)$. Consequently,
we can state:
\begin{prop}
\label{prop: description espace de module}We have the following bijections:
\begin{eqnarray*}
\Lambda_{\lambda}\left(\ynorm\right) & \tilde{\rightarrow} & \cal E_{1,\lambda}\left(\ynorm\right)\times\cal E_{2,\lambda}\left(\ynorm\right)\\
\psi_{\lambda} & \mapsto & \left(\Psi_{1,\lambda},\Psi_{2,\lambda}\right)
\end{eqnarray*}
and
\begin{eqnarray*}
\Lambda_{-\lambda}\left(\ynorm\right) & \tilde{\rightarrow} & \cal E_{1,-\lambda}\left(\ynorm\right)\times\cal E_{2,-\lambda}\left(\ynorm\right)\\
\psi_{-\lambda} & \mapsto & \left(\Psi_{1,-\lambda},\Psi_{2,-\lambda}\right)
\end{eqnarray*}
$\Big($we identify here $\Psi_{\pm\lambda}\left(x,\mathbf{h}\right)=\left(x,\Psi_{1,\pm\lambda}\left(\mathbf{h}\right),\Psi_{2,\pm\lambda}\left(\mathbf{h}\right)\right)$
with $\left(\Psi_{1,\pm\lambda}\left(\mathbf{h}\right),\Psi_{2,\pm\lambda}\left(\mathbf{h}\right)\right)$$\Big)$.
\end{prop}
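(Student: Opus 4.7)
Given $\psi_{\pm\lambda}\in\Lambda_{\pm\lambda}\left(\ynorm\right)$, the conjugate $\Psi_{\pm\lambda}:=\cal H_{\pm\lambda}\circ\psi_{\pm\lambda}\circ\cal H_{\pm\lambda}^{-1}$ is a fibered biholomorphism on the space of leaves by Proposition \ref{prop: isotropie espace des feuilles}. Lemma \ref{lem: istropies espace des feuilles 2} supplies the power series expansions $\left(\mbox{\ref{eq: description moduli space}}\right)$ whose coefficients $\left(\Psi_{j,\pm\lambda,n}\right)_{n}$ are entire in $w$ and satisfy exactly the growth estimates defining $\cal E_{j,\pm\lambda}\left(\ynorm\right)$, while Lemma \ref{lem: isotropies exp plates} identifies the first coefficients (namely $\Psi_{1,\lambda,1}\equiv1$, $\Psi_{2,-\lambda,1}\equiv1$, $\Psi_{1,-\lambda,-1}\left(w\right)=w$, $\Psi_{2,\lambda,-1}\left(w\right)=w$), so that the remaining sequence effectively starts at $n=N\left(j,\pm\right)+1$. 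This gives well-definedness of the forward map. Injectivity is then immediate: two isotropies with the same image produce, \emph{via} $\left(\mbox{\ref{eq: description moduli space}}\right)$, the same $\Psi_{\pm\lambda}$, hence coincide after conjugating back by the biholomorphism $\cal H_{\pm\lambda}$ of Corollary \ref{cor: coordonn=0000E9es espace des feuiles}.

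\textbf{Surjectivity: construction.} Conversely, starting from sequences $\left(\Psi_{j,\pm\lambda,n}\right)_{n}\in\cal E_{j,\pm\lambda}\left(\ynorm\right)$ for $j\in\acc{1,2}$, I define $\Psi_{1,\pm\lambda}\left(\mathbf{h}\right)$ and $\Psi_{2,\pm\lambda}\left(\mathbf{h}\right)$ by the series $\left(\mbox{\ref{eq: description moduli space}}\right)$. The inequalities defining $\cal E_{j,\pm\lambda}$ are precisely those provided by Lemma \ref{lem: istropies espace des feuilles 2}, so a Cauchy-type majorization identical to the one used there shows normal convergence of these series on every $\lsr{\pm}{\tilde{\mathbf{r}}}$ with $\tilde{r}_{i}<r_{i}$. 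One therefore obtains a fibered holomorphic self-map $\Psi_{\pm\lambda}$ of $S_{\pm\lambda}\times\Gamma_{\pm\lambda}$, which is automatically an isotropy of $x^{2}\pp x$ since it does not depend on $x$. Setting $\psi_{\pm\lambda}:=\cal H_{\pm\lambda}^{-1}\circ\Psi_{\pm\lambda}\circ\cal H_{\pm\lambda}$ gives a fibered sectorial self-map in $S_{\pm\lambda}\times\mathbf{D\left(0,r'\right)}$; because $\cal H_{\pm\lambda}$ conjugates $\ynorm$ to $x^{2}\pp x$, this $\psi_{\pm\lambda}$ is an isotropy of $\ynorm$.

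\textbf{Main obstacle: verifying the Gevrey-1 asymptotic to identity.} The remaining and most delicate point is to check that $\psi_{\pm\lambda}$ really belongs to $\Lambda_{\pm\lambda}\left(\ynorm\right)$, i.e.\ that it is tangent to the identity and admits the identity as \emph{Gevrey-1} asymptotic expansion on $S_{\pm\lambda}\times\mathbf{D\left(0,r\right)}$. This is essentially the reverse of the estimates carried out in the proofs of Lemma \ref{lem: istropie monomiale} and Lemma \ref{lem: isotropies exp plates}. After substituting $h_{j}=y_{j}/f_{j}\left(x,y_{1}y_{2}/x^{a}\right)$ and $w=y_{1}y_{2}/x^{a}$ in $\left(\mbox{\ref{eq: description moduli space}}\right)$, I would bound $\abs{\psi_{j,\pm\lambda}\left(x,\mathbf{y}\right)-y_{j}}$ term by term using the defining inequalities of $\cal E_{j,\pm\lambda}$, together with the auxiliary comparison $\tilde{x}=2x\in S_{\pm\lambda}$ that controls the ratio $f_{1}\left(2x,\cdot\right)/f_{1}\left(x,\cdot\right)$ by $\abs{2^{a_{1}}}\exp\left(-B/\abs x\right)$ for some $B>0$; summing the resulting geometric series yields $\abs{\psi_{j,\pm\lambda}\left(x,\mathbf{y}\right)-y_{j}}\leq A\exp\left(-B/\abs x\right)$ uniformly on each closed subsector and closed subpolydisc. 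This exponential flatness both forces tangency to the identity and delivers the Gevrey-1 asymptotic expansion, so $\psi_{\pm\lambda}\in\Lambda_{\pm\lambda}\left(\ynorm\right)$. By construction the forward map sends $\psi_{\pm\lambda}$ back to the chosen sequences, completing the bijection.
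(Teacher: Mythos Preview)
Your proposal is correct and follows the same approach as the paper. The paper's own argument is just the short paragraph preceding the proposition: the forward map is well-defined by Lemmas \ref{lem: istropies espace des feuilles 2} and \ref{lem: isotropies exp plates}, and for the converse it simply says ``the estimates made in section \ref{sec:Sectorial-isotropies-and} show that $\psi_{\pm\lambda}:=\cal H_{\pm\lambda}^{-1}\circ\Psi_{\pm\lambda}\circ\cal H_{\pm\lambda}$ belongs to $\Lambda_{\pm\lambda}\left(\ynorm\right)$''; you have unpacked precisely those estimates (the $\tilde{x}=2x$ comparison and the resulting geometric-series bound from Lemmas \ref{lem: istropie monomiale} and \ref{lem: isotropies exp plates}) that the paper leaves implicit.
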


\subsection{Analytic invariant varieties and two-dimensional saddle-nodes}

~

We can give a necessary and sufficient condition for the existence
of analytic invariant varieties in terms of the moduli space described
above.

We recall that for any vector field ${\displaystyle Y\in\cro{\ynorm}}$
as in (\ref{eq: intro}) (\emph{cf.} Definition \ref{def: ynorm class}),
there always exist three formal invariant varieties: $\mathscr{C}=\acc{\left(y_{1},y_{2}\right)=\left(g_{1}\left(x\right),g_{2}\left(x\right)\right)}$,
$\mathscr{H}_{1}=\acc{{\displaystyle y_{1}=f_{1}\left(x,y_{2}\right)}}$
and $\mathscr{H}_{2}=\acc{{\displaystyle y_{2}}=f_{2}\left(x,y_{1}\right)}$,
where $g_{1},g_{2},f_{1},f_{2}$ are formal power series with null
constant term. The first one is classically called the \emph{center
variety, }and we have $\mathscr{C}=\mathscr{H}_{1}\cap\mathscr{H}_{2}$.
If $Y=\ynorm$, then:
\[
\begin{cases}
\mathscr{C}=\acc{y_{1}=y_{2}=0}\\
\mathscr{H}_{1}=\acc{y_{1}=0}\\
\mathscr{H}_{2}=\acc{y_{2}=0} & .
\end{cases}
\]

\begin{prop}
Let ${\displaystyle Y\in\cro{\ynorm}}$ and $\left(\Phi_{\lambda},\Phi_{-\lambda}\right)\in\Lambda_{\lambda}\left(\ynorm\right)\times\Lambda_{-\lambda}\left(\ynorm\right)$
be its Stokes diffeomorphisms. We consider $\Psi_{\pm}=\cal H_{\pm\lambda}\circ\Phi_{\pm\lambda}\circ\cal H_{\pm\lambda}^{-1}$
as above. Then:
\begin{enumerate}
\item the center variety $\mathscr{C}$ is convergent (analytic in the origin)
if and only if $\Psi_{2,\lambda,0}\left(0\right)=\Psi_{1,-\lambda,0}\left(0\right)=0$;
\item the invariant hypersurface $\mathscr{H}_{1}$ is convergent (analytic
in the origin) if and only if for all $n\geq0$, we have $\Psi_{1,-\lambda,n}\left(0\right)=0$;
\item the invariant hypersurface $\mathscr{H}_{2}$ is convergent (analytic
in the origin) if and only if for all $n\geq0$, we have $\Psi_{2,\lambda,n}\left(0\right)=0$.
\end{enumerate}
\end{prop}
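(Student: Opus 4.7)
The strategy is to push each of the three invariant varieties of $\ynorm$ back through the sectorial normalizations $\Phi_{\pm}$ and then ask when the resulting sectorial invariant varieties of $Y$ glue into a single germ at the origin. Write $V_{\text{norm}}$ for whichever of $\{y_1 = y_2 = 0\}$, $\{y_1 = 0\}$, $\{y_2 = 0\}$ is associated to $\mathscr{C}$, $\mathscr{H}_1$, $\mathscr{H}_2$. Then $V^{\pm} := \Phi_{\pm}^{-1}(V_{\text{norm}})$ are analytic invariant subvarieties of $Y$ on $S_{\pm}\times(\wc^2,0)$. Because $\Phi_\pm$ are tangent to the identity, each $V^\pm$ admits bounded holomorphic defining functions; consequently, by Riemann's removable-singularity theorem, the formal invariant variety of $Y$ attached to $V_{\text{norm}}$ is convergent (i.e.\ a germ of analytic variety at $0$) if and only if $V^+$ and $V^-$ coincide on the overlap $(S_\lambda\cup S_{-\lambda})\times(\wc^2,0)$.

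From the definition $\Phi_{\pm\lambda} = \Phi_{\pm}\circ\Phi_{\mp}^{-1}$ on $S_{\pm\lambda}$, the equality $V^+ = V^-$ over $S_{\pm\lambda}$ is equivalent to the Stokes diffeomorphism $\Phi_{\pm\lambda}$ preserving $V_{\text{norm}}$ setwise. Passing to the space-of-leaves coordinates via the biholomorphism $\cal H_{\pm\lambda}$ of Corollary \ref{cor: coordonn=0000E9es espace des feuiles}, and noting from the very definition of $h_1, h_2$ that $\cal H_{\pm\lambda}(\{y_j=0\}) = \{h_j=0\}$, the criterion becomes: does $\Psi_{\pm\lambda}$ preserve $\{h_1=0\}$, $\{h_2=0\}$ or the point $(0,0)$, respectively?

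The three conclusions can now be read off directly from Lemma \ref{lem: istropies espace des feuilles 2}, using the normalizations $\Psi_{1,\lambda,1}(w)=\Psi_{2,-\lambda,1}(w)=1$ and $\Psi_{1,-\lambda,-1}(w)=\Psi_{2,\lambda,-1}(w)=w$. Since
\[
\Psi_{1,\lambda}(h_1,h_2) = h_1 + \sum_{n\geq 2} \Psi_{1,\lambda,n}(h_1h_2)\,h_1^n, \qquad \Psi_{2,-\lambda}(h_1,h_2) = h_2 + \sum_{n\geq 2}\Psi_{2,-\lambda,n}(h_1h_2)\,h_2^n
\]
are divisible by $h_1$ and $h_2$ respectively, $\Psi_\lambda$ automatically preserves $\{h_1=0\}$ and $\Psi_{-\lambda}$ automatically preserves $\{h_2=0\}$. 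Hence $\mathscr{H}_1$ is convergent iff $\Psi_{-\lambda}$ preserves $\{h_1=0\}$, and
\[
\Psi_{1,-\lambda}(0,h_2) = \sum_{n\geq 0}\Psi_{1,-\lambda,n}(0)\,h_2^n
\]
vanishes identically iff $\Psi_{1,-\lambda,n}(0)=0$ for every $n\geq 0$, yielding (2). The mirror computation with $(h_1,h_2)\to(h_2,h_1)$ and $\lambda\to-\lambda$ gives (3). For the center variety, evaluation at the origin of the fiber gives $\Psi_\lambda(0,0) = (0,\Psi_{2,\lambda,0}(0))$ and $\Psi_{-\lambda}(0,0) = (\Psi_{1,-\lambda,0}(0),0)$, and requiring both to fix $(0,0)$ produces (1).

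The delicate point of this plan is the gluing step: one must choose for each wide sector a bounded holomorphic defining equation for $V^\pm$, verify that agreement over the narrow overlap sectors produces a function holomorphic and bounded on $\bigl(D(0,r)\setminus\{0\}\bigr)\times(\wc^2,0)$, and then extend it across $\{x=0\}$ using Riemann's theorem to obtain a genuine germ of holomorphic invariant variety at the origin. Everything else is an explicit algebraic manipulation of the power series expansions of Lemma \ref{lem: istropies espace des feuilles 2}; the substantive content is precisely that these sectorial objects can be glued, and this is what makes the vanishing of a few Taylor coefficients of the Stokes data equivalent to convergence of the formal invariant variety.
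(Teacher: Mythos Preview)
Your argument is correct and follows essentially the same route as the paper: both proofs pull back the invariant varieties of $\ynorm$ through the sectorial normalizations $\Phi_\pm$, observe that convergence is equivalent to the two sectorial pre-images agreeing on the overlap (equivalently, to the Stokes diffeomorphisms preserving $\{y_j=0\}$), and then read this off from the power-series expansions of $\Psi_{\pm\lambda}$ in the space of leaves. Your write-up is in fact more detailed than the paper's, which only spells out item~(2) and leaves the gluing via Riemann's theorem implicit; your explicit observation that $\Psi_\lambda$ automatically preserves $\{h_1=0\}$ and $\Psi_{-\lambda}$ automatically preserves $\{h_2=0\}$ (so only the ``other'' narrow sector imposes a condition) is a helpful clarification the paper omits.
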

\begin{proof}
It is a direct consequence of the power series representation (\ref{eq: description moduli space})
of the Stokes diffeomorphisms $\left(\Phi_{\lambda},\Phi_{-\lambda}\right)$.
Let us explain item $2.$ (the same arguments hold for $1.$ and $3.$
with minor adaptation). The fact that $\Psi_{1,-\lambda,n}\left(0\right)=0$
for all $n\geq0$ means that $\Psi_{1,-\lambda}$ is divisible by
$h_{1}$. Equivalently, both $\Phi_{1,\lambda}$ and $\Phi_{1,-\lambda}$
are divisible by $y_{1}$, so that the analytic hypersurface $\acc{y_{1}=0}$
has the same pre-image by the sectorial normalizing maps $\Phi_{+}$
and $\Phi_{-}$. These pre-images glue together in order to define
an analytic invariant hypersurface $\mathscr{H}_{1}$.
\end{proof}
Notice that if we consider the restriction of a formal normal form
$\ynorm$ to one of the formal invariant hypersurfaces, we obtain
precisely the normal form for two-dimensional saddle-nodes as given
in \cite{MR82}. When one of these hypersurfaces is convergent (\emph{i.e.
}analytic), we recover the Martinet-Ramis invariants by restriction
to this hypersurface, as we present below.
\begin{prop}
Suppose that the formal invariant hypersurface $\mathscr{H}_{1}$
is convergent (\emph{i.e. }analytic in the origin). Then, the Martinet-Ramis
invariants for the saddle-node $Y_{\mid\mathscr{H}_{1}}$ are given
by:
\[
\begin{cases}
{\displaystyle \Psi_{2,\lambda}\left(0,h_{2}\right)=h_{2}+\Psi_{2,\lambda,0}\left(0\right)\in Aff\left(\ww C\right)}\\
{\displaystyle \Psi_{2,-\lambda}\left(0,h_{2}\right)=h_{2}+\sum_{n\geq2}\Psi_{2,-\lambda,n}\left(0\right)h_{2}^{n}\in\diff[\ww C,0]}.
\end{cases}
\]

\end{prop}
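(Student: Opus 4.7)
The plan is to show that this proposition follows by a direct evaluation at $h_{1}=0$ of the power series expansions of $\Psi_{2,\pm\lambda}$ obtained in Lemma~\ref{lem: istropies espace des feuilles 2}, once we have verified that this evaluation genuinely produces the Martinet-Ramis invariants of the two-dimensional saddle-node $Y_{\mid\mathscr{H}_{1}}$.

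First I would argue that the sectorial normalizing maps $\Phi_{+},\Phi_{-}$ preserve the hypersurface $\mathscr{H}_{1}$. Indeed, the previous proposition asserts that the convergence of $\mathscr{H}_{1}$ is equivalent to $\Psi_{1,-\lambda,n}(0)=0$ for all $n\geq 0$, which in view of the expansions of $\psi_{1,\pm\lambda}$ given by Lemma~\ref{lem: isotropies exp plates} means precisely that the components $\Phi_{1,+},\Phi_{1,-}$ of $\Phi_{+},\Phi_{-}$ are divisible by $y_{1}$. Hence $\Phi_{\pm}$ restricts to a sectorial fibered biholomorphism from $\mathscr{H}_{1}\cap(S_{\pm}\times(\ww C^{2},0))$ onto $\{y_{1}=0\}\cap(S_{\pm}\times(\ww C^{2},0))$ which conjugates $Y_{\mid\mathscr{H}_{1}}$ to
\[
\ynorm_{\mid\{y_{1}=0\}} \;=\; x^{2}\pp x+(\lambda+a_{2}x)y_{2}\pp{y_{2}}
\]
(the resonant contribution vanishes because $c(0)=0$). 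By construction, the Stokes diffeomorphisms of $Y_{\mid\mathscr{H}_{1}}$, \emph{i.e.} its Martinet-Ramis invariants, are therefore the restrictions of $\Phi_{\lambda},\Phi_{-\lambda}$ to $\mathscr{H}_{1}$.

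Next, I transport to the space-of-leaves picture. The map $\cal H_{\pm\lambda}$ restricts to $\{y_{1}=0\}$ as $(x,y_{2})\mapsto(x,h_{2})$ with $h_{2}=y_{2}\exp(\lambda/x)x^{-a_{2}}$, which is exactly the standard space-of-leaves coordinate for the saddle-node $\ynorm_{\mid\{y_{1}=0\}}$ in the Martinet-Ramis theory. Under this identification, the subspace $\{h_{1}=0\}$ of the space of leaves corresponds to $\{y_{1}=0\}$ on the source side, and the restricted Stokes maps in space-of-leaves coordinates are simply $h_{2}\mapsto\Psi_{2,\pm\lambda}(0,h_{2})$.

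Finally, I read off these restrictions from Lemma~\ref{lem: istropies espace des feuilles 2}, together with the remark after Lemma~\ref{lem: isotropies exp plates} giving $\Psi_{2,\lambda,-1}(w)=w$ and $\Psi_{2,-\lambda,1}(w)=1$. In the $+\lambda$ direction,
\[
\Psi_{2,\lambda}(h_{1},h_{2})=\sum_{n\geq -1}\Psi_{2,\lambda,n}(h_{1}h_{2})\,h_{1}^{n}
\]
restricts at $h_{1}=0$ to $h_{2}+\Psi_{2,\lambda,0}(0)$, since the $n=-1$ term evaluates to $h_{2}$ and all $n\geq 1$ terms vanish; this yields an element of $\mathrm{Aff}(\ww C)$. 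In the $-\lambda$ direction,
\[
\Psi_{2,-\lambda}(h_{1},h_{2})=\sum_{n\geq 1}\Psi_{2,-\lambda,n}(h_{1}h_{2})\,h_{2}^{n}
\]
restricts at $h_{1}=0$ to $h_{2}+\sum_{n\geq 2}\Psi_{2,-\lambda,n}(0)h_{2}^{n}$, a diffeomorphism tangent to the identity. The only subtle point—which is the main (though modest) technical check—is the identification of these restrictions with the classical Martinet-Ramis invariants in their standard normalization, but this is automatic once one observes that our $\cal H_{\pm\lambda}$ restricts to the standard Martinet-Ramis first integral on $\mathscr{H}_{1}$ and that our narrow sectors $S_{\pm\lambda}$ are precisely the two Stokes sectors associated with the eigendirection $\lambda$ of the restricted saddle-node.
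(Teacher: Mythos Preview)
Your approach is correct and is precisely the one the paper intends: the proposition is stated there without proof, immediately after the remark that ``we recover the Martinet-Ramis invariants by restriction to this hypersurface,'' so the argument by restriction of the three-dimensional sectorial data to $\{y_{1}=0\}$ and direct evaluation of the expansions of Lemma~\ref{lem: istropies espace des feuilles 2} at $h_{1}=0$ is exactly what is expected.

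One small imprecision to fix: in your first paragraph you write that the components $\Phi_{1,+},\Phi_{1,-}$ of the \emph{normalizing maps} are divisible by $y_{1}$. That is not what the condition $\Psi_{1,-\lambda,n}(0)=0$ gives you via Lemma~\ref{lem: isotropies exp plates}; those lemmas concern the \emph{Stokes diffeomorphisms} $\psi_{1,\pm\lambda}$ (equivalently $\Phi_{1,\lambda},\Phi_{1,-\lambda}$), whose first components are then divisible by $y_{1}$. From this one deduces, as in the paper's proof of the preceding proposition, that $\Phi_{+}^{-1}(\{y_{1}=0\})$ and $\Phi_{-}^{-1}(\{y_{1}=0\})$ agree on the overlap and glue to the analytic $\mathscr{H}_{1}$, so that $\Phi_{\pm}$ indeed carries $\mathscr{H}_{1}$ onto $\{y_{1}=0\}$. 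Your subsequent conclusion is correct; only the intermediate justification needs this adjustment.
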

Similar result holds for the hypersurface $\mathscr{H}_{2}$.

\subsection{The transversally symplectic case and quasi-linear Stokes phenomena
in the first Painlevé equation }

~

Let us now focus on the transversally symplectic case studied in Theorem
\ref{th: espace de module symplectic}. Let $\ynorm\in\snodiag$ be
transversally symplectic (\emph{i.e.} its residue is $\tx{res}\left(\ynorm\right)=1$).
Using the notations introduced in paragraph \ref{sub: description moduli space},
we define the following sets:
\begin{eqnarray*}
\left(\cal E_{1,\lambda}\left(\ynorm\right)\times\cal E_{2,\lambda}\left(\ynorm\right)\right)_{\omega} & := & \acc{\begin{array}{c}
\Psi_{\lambda}=\left(\Psi_{1,\lambda},\Psi_{2,\lambda}\right)\in\cal E_{1,\lambda}\left(\ynorm\right)\times\cal E_{2,\lambda}\left(\ynorm\right)\\
\mbox{such that: }\det\left(\tx D\Psi_{\lambda}\right)=1
\end{array}}\\
\left(\cal E_{1,-\lambda}\left(\ynorm\right)\times\cal E_{2,-\lambda}\left(\ynorm\right)\right)_{\omega} & := & \acc{\begin{array}{c}
\Psi_{-\lambda}=\left(\Psi_{1,-\lambda},\Psi_{2,-\lambda}\right)\in\cal E_{1,-\lambda}\left(\ynorm\right)\times\cal E_{2,-\lambda}\left(\ynorm\right)\\
\mbox{such that: }\det\left(\tx D\Psi_{-\lambda}\right)=1
\end{array}}.
\end{eqnarray*}
According to Proposition \ref{prop: description espace de module},
the map
\begin{eqnarray*}
\Lambda_{\pm\lambda}\left(\ynorm\right) & \longrightarrow & \cal E_{1,\pm\lambda}\left(\ynorm\right)\times\cal E_{2,\pm\lambda}\left(\ynorm\right)\\
\psi_{\pm\lambda} & \mapsto & \Psi_{\pm\lambda}:=\cal H_{\pm\lambda}\circ\psi_{\pm\lambda}\circ\cal H_{\pm\lambda}^{-1}
\end{eqnarray*}
given in (\ref{eq: isotropie dans espace des feuilles}) is a bijection
$\Big($we identify here $\Psi_{\pm\lambda}\left(x,\mathbf{h}\right)=\left(x,\Psi_{1,\pm\lambda}\left(\mathbf{h}\right),\Psi_{2,\pm\lambda}\left(\mathbf{h}\right)\right)$
with $\left(\Psi_{1,\pm\lambda}\left(\mathbf{h}\right),\Psi_{2,\pm\lambda}\left(\mathbf{h}\right)\right)$$\Big)$.
An easy computation based on (\ref{eq: integrales premieres}) gives:
\begin{eqnarray*}
\left(\cal H_{\pm\lambda}^{-1}\right)^{*}\left(\frac{\tx dy_{1}\wedge\tx dy_{2}}{x}\right) & = & \tx dh_{1}\wedge\tx dh_{2}+\ps{\tx dx}\,\,.
\end{eqnarray*}
This means in particular that $\psi_{\pm\lambda}$ is transversally
symplectic with respect to ${\displaystyle \omega=\frac{\tx dy_{1}\wedge\tx dy_{2}}{x}}$,
\emph{i.e.} 
\begin{eqnarray*}
\left(\psi_{\pm\lambda}\right)^{*}\left(\omega\right) & = & \omega+\ps{\tx dx}\,\,,
\end{eqnarray*}
if and only if $\Psi_{\pm\lambda}=\left(\Psi_{1,\pm\lambda},\Psi_{2,\pm\lambda}\right)$
preserves the standard symplectic form $\tx dh_{1}\wedge\tx dh_{2}$
in the space of leaves, \emph{i.e. $\det\left(\tx D\Psi_{\pm\lambda}\right)=1$}.
In other words:
\begin{prop}
We have the following bijections:
\begin{eqnarray*}
\Lambda_{\lambda}^{\omega}\left(\ynorm\right) & \tilde{\rightarrow} & \left(\cal E_{1,\lambda}\left(\ynorm\right)\times\cal E_{2,\lambda}\left(\ynorm\right)\right)_{\omega}\\
\psi_{\lambda} & \mapsto & \left(\Psi_{1,\lambda},\Psi_{2,\lambda}\right)
\end{eqnarray*}
and
\begin{eqnarray*}
\Lambda_{-\lambda}^{\omega}\left(\ynorm\right) & \tilde{\rightarrow} & \left(\cal E_{1,-\lambda}\left(\ynorm\right)\times\cal E_{2,-\lambda}\left(\ynorm\right)\right)_{\omega}\\
\psi_{-\lambda} & \mapsto & \left(\Psi_{1,-\lambda},\Psi_{2,-\lambda}\right)
\end{eqnarray*}
$\Big($we identify here $\Psi_{\pm\lambda}\left(x,\mathbf{h}\right)=\left(x,\Psi_{1,\pm\lambda}\left(\mathbf{h}\right),\Psi_{2,\pm\lambda}\left(\mathbf{h}\right)\right)$
with $\left(\Psi_{1,\pm\lambda}\left(\mathbf{h}\right),\Psi_{2,\pm\lambda}\left(\mathbf{h}\right)\right)$$\Big)$.
\end{prop}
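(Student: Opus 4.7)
My plan is to deduce this proposition directly from Proposition~\ref{prop: description espace de module} together with the transversal symplectic identity
\[
\left(\cal H_{\pm\lambda}^{-1}\right)^{*}\left(\omega\right) \;=\; \tx dh_{1}\wedge\tx dh_{2} \;+\; \alpha_{\pm}\,,\qquad \alpha_{\pm}\in\ps{\tx dx}\,,
\]
which is already established in the text just before the statement. Since Proposition~\ref{prop: description espace de module} gives a bijection $\Lambda_{\pm\lambda}(\ynorm)\tilde{\to} \cal E_{1,\pm\lambda}(\ynorm)\times \cal E_{2,\pm\lambda}(\ynorm)$ via $\psi_{\pm\lambda}\mapsto \Psi_{\pm\lambda}=\cal H_{\pm\lambda}\circ\psi_{\pm\lambda}\circ\cal H_{\pm\lambda}^{-1}$, the only remaining task is to check that the subgroup $\Lambda^{\omega}_{\pm\lambda}(\ynorm)$ on the left corresponds precisely to the subset cut out by $\det(\tx D\Psi_{\pm\lambda})=1$ on the right.

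The crucial observation is that, by construction, $\Psi_{\pm\lambda}$ is fibered in $x$ and, because it is an isotropy of $x^{2}\partial_{x}$, its two non-trivial components $\Psi_{j,\pm\lambda}$ depend \emph{only} on $(h_{1},h_{2})$ (this was proven in Proposition~\ref{prop: isotropie espace des feuilles}). Consequently $\tx d\Psi_{j,\pm\lambda}$ contains no $\tx dx$ term, so
\[
\Psi_{\pm\lambda}^{*}\left(\tx dh_{1}\wedge \tx dh_{2}\right) \;=\; \tx d\Psi_{1,\pm\lambda}\wedge \tx d\Psi_{2,\pm\lambda} \;=\; \det\left(\tx D\Psi_{\pm\lambda}\right)\,\tx dh_{1}\wedge \tx dh_{2}\,,
\]
with no admixture of $\tx dx$. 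Moreover $\cal H_{\pm\lambda}$ and $\Psi_{\pm\lambda}$ both preserve the coordinate $x$, hence $\cal H_{\pm\lambda}^{*}$ and $\Psi_{\pm\lambda}^{*}$ each map the ideal $\ps{\tx dx}$ into itself.

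Using these remarks and functoriality of the pullback, I would compute
\[
\psi_{\pm\lambda}^{*}\left(\omega\right) - \omega
\;=\; \cal H_{\pm\lambda}^{*}\Big(\Psi_{\pm\lambda}^{*}\left(\tx dh_{1}\wedge \tx dh_{2}+\alpha_{\pm}\right) - \left(\tx dh_{1}\wedge \tx dh_{2}+\alpha_{\pm}\right)\Big)
\;=\; \cal H_{\pm\lambda}^{*}\!\left(\big(\det\left(\tx D\Psi_{\pm\lambda}\right)-1\big)\tx dh_{1}\wedge \tx dh_{2}\right) \;+\; \beta_{\pm},
\]
with $\beta_{\pm}\in \ps{\tx dx}$. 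Since $\cal H_{\pm\lambda}^{*}(\tx dh_{1}\wedge \tx dh_{2})$ represents $\omega$ modulo $\ps{\tx dx}$, which is not of the form $g\,\tx dx\wedge(\cdot)$, the left-hand side lies in $\ps{\tx dx}$ if and only if $\det(\tx D\Psi_{\pm\lambda})\equiv 1$. This shows $\psi_{\pm\lambda}\in \Lambda^{\omega}_{\pm\lambda}(\ynorm)\iff \det(\tx D\Psi_{\pm\lambda})=1$, which combined with Proposition~\ref{prop: description espace de module} yields the claimed bijections.

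The only delicate step is keeping track of what belongs to $\ps{\tx dx}$ at each stage; the fibered nature of $\Psi_{\pm\lambda}$ and the $x$-independence of its components are what make the implication ``$\Rightarrow$'' actually give $\det(\tx D\Psi_{\pm\lambda})=1$ rather than merely an equality modulo $\tx dx$. Once this is observed, the argument is purely formal and requires no new estimates beyond those gathered in section~\ref{sec:Sectorial-isotropies-and}.
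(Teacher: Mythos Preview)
Your proposal is correct and follows essentially the same approach as the paper: the argument given in the paragraph immediately preceding the proposition already invokes Proposition~\ref{prop: description espace de module} together with the identity $\left(\cal H_{\pm\lambda}^{-1}\right)^{*}(\omega)=\tx dh_{1}\wedge\tx dh_{2}+\ps{\tx dx}$, and simply asserts the equivalence between transversal symplecticity of $\psi_{\pm\lambda}$ and $\det(\tx D\Psi_{\pm\lambda})=1$. Your write-up spells out this equivalence more carefully, tracking the $\ps{\tx dx}$ ideal through the pullbacks and using the $x$-independence of $\Psi_{j,\pm\lambda}$ from Proposition~\ref{prop: isotropie espace des feuilles}, but the strategy is identical.
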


\subsection{Quasi-linear Stokes phenomena in the first Painlevé equation}

~

In \cite{bittmann:tel-01367968}, we link the study of \emph{quasi-linear}
\emph{Stokes phenomena }(see \cite{Kapaev} for the first Painlevé
equation)\emph{ }to our Stokes diffeomorphisms. For instance, in the
case of the first Painlevé equation, we show that the quasi-linear
Stokes phenomena formula found by Kapaev in \cite{Kapaev} allows
to compute the terms $\Psi_{2,\lambda,0}\left(0\right)$ and $\Psi_{1,-\lambda,0}\left(0\right)$
in (\ref{eq: description moduli space}). More precisely, elementary
computations (using Kapaev's connection formula) give: 
\[
\Psi_{2,\lambda,0}\left(0\right)=i\Psi_{1,-\lambda,0}\left(0\right)=\frac{e^{\frac{i\pi}{8}}}{\sqrt{\pi}}2^{\frac{3}{8}}3^{\frac{1}{8}}\,\,.
\]
Moreover, our description of the Stokes diffeomorphisms implies a
more precise estimate of the order of the remaining terms in Kapaev's
formula. In a forthcoming paper, we will use the study of some \emph{non-linear
Stokes phenomena }for the second Painlevé equations\emph{ }(see \emph{e.g.
}\cite{ClarksonMcLeod}) in order to compute coefficients of the $\Psi_{i,\pm\lambda}$'s.

\bibliographystyle{alpha}
\bibliography{references_preprint_AIF}

\begin{thebibliography}{HKM61}

\bibitem[BDM08]{DeMaesschalck}
P.~Bonckaert and P.~De~Maesschalck.
\newblock Gevrey normal forms of vector fields with one zero eigenvalue.
\newblock {\em J. Math. Anal. Appl.}, 344(1):301--321, 2008.

\bibitem[Bit16a]{bittmann:tel-01367968}
Amaury Bittmann.
\newblock {\em {Analytic classification of germs of three-dimensional
  doubly-resonant vector fields and applications to Painlev{\'e} equations}}.
\newblock Theses, {Universit{\'e} de Strasbourg, IRMA UMR 7501}, October 2016.

\bibitem[Bit16b]{bittmann1}
Amaury Bittmann.
\newblock Doubly-resonant saddle-nodes in $\mathbb{C}^3$ and the fixed
  singularity at infinity in the painlev{\'e} equations: Formal classification.
\newblock {\em Qualitative Theory of Dynamical Systems}, pages 1--39, 2016.

\bibitem[Bit16c]{bittmann2}
Amaury Bittmann.
\newblock {Doubly-resonant saddle-nodes in $(\mathbb{C}^{3},0)$ and the fixed
  singularity at infinity in the Painlev{\'e} equations (part II): sectorial
  normalization.}
\newblock working paper or preprint, June 2016.

\bibitem[CM82]{ClarksonMcLeod}
P.~A. Clarkson and J.~B. McLeod.
\newblock A connection formula for the second {P}ainlev\'e transcendent.
\newblock In {\em Ordinary and partial differential equations ({D}undee,
  1982)}, volume 964 of {\em Lecture Notes in Math.}, pages 135--142. Springer,
  Berlin, 1982.

\bibitem[HKM61]{HKM}
Masuo Hukuhara, Tosihusa Kimura, and Tizuko Matuda.
\newblock {\em Equations diff\'erentielles ordinaires du premier ordre dans le
  champ complexe}.
\newblock Publications of the Mathematical Society of Japan, 7. The
  Mathematical Society of Japan, Tokyo, 1961.

\bibitem[Hor73]{hormander1973introduction}
Lars Hormander.
\newblock {\em An introduction to complex analysis in several variables},
  volume~7.
\newblock Elsevier, 1973.

\bibitem[Iwa80]{Iwano}
Masahiro Iwano.
\newblock On a general solution of a nonlinear {$2$}-system of the form
  {$x^{2}dw/dx=\Lambda w+xh(x,\,w)$} with a constant diagonal matrix {$\Lambda
  $} of signature {$(1,\,1)$}.
\newblock {\em T\^ohoku Math. J. (2)}, 32(4):453--486, 1980.

\bibitem[Kap04]{Kapaev}
A.~A. Kapaev.
\newblock Quasi-linear stokes phenomenon for the {P}ainlev\'e first equation.
\newblock {\em J. Phys. A}, 37(46):11149--11167, 2004.

\bibitem[Mal95]{malgrange1995sommation}
Bernard Malgrange.
\newblock Sommation des s{\'e}ries divergentes.
\newblock In {\em Exposition. Math}, volume~13, pages 163--222, 1995.

\bibitem[Mar81]{Martinet}
Jean Martinet.
\newblock Normalisation des champs de vecteurs holomorphes (d'apr\`es {A}.-{D}.
  {B}rjuno).
\newblock In {\em Bourbaki {S}eminar, {V}ol. 1980/81}, volume 901 of {\em
  Lecture Notes in Math.}, pages 55--70. Springer, Berlin-New York, 1981.

\bibitem[MR82]{MR82}
Jean Martinet and Jean-Pierre Ramis.
\newblock Probl\`emes de modules pour des \'equations diff\'erentielles non
  lin\'eaires du premier ordre.
\newblock {\em Inst. Hautes \'Etudes Sci. Publ. Math.}, (55):63--164, 1982.

\bibitem[MR83]{MR83}
Jean Martinet and Jean-Pierre Ramis.
\newblock Classification analytique des \'equations diff\'erentielles non
  lin\'eaires r\'esonnantes du premier ordre.
\newblock {\em Ann. Sci. \'Ecole Norm. Sup. (4)}, 16(4):571--621 (1984), 1983.

\bibitem[RS93]{ramis1993divergent}
Jean~Pierre Ramis and L~Stolovitch.
\newblock Divergent series and holomorphic dynamical systems.
\newblock {\em Unpublished lecture notes}, 1:993, 1993.

\bibitem[Shi83]{Shimo}
Shun Shimomura.
\newblock Analytic integration of some nonlinear ordinary differential
  equations and the fifth {P}ainlev\'e equation in the neighbourhood of an
  irregular singular point.
\newblock {\em Funkcial. Ekvac.}, 26(3):301--338, 1983.

\bibitem[Sto96]{Stolo}
Laurent Stolovitch.
\newblock Classification analytique de champs de vecteurs {$1$}-r\'esonnants de
  {$({\bf C}^n,0)$}.
\newblock {\em Asymptotic Anal.}, 12(2):91--143, 1996.

\bibitem[Tey03]{Teyssier03}
Lo{\"{\i}}c Teyssier.
\newblock Analytical classification of saddle-node vector fields.
\newblock {\em C. R. Math. Acad. Sci. Paris}, 336(8):619--624, 2003.

\bibitem[Tey04]{teyssier2004equation}
Lo{\"\i}c Teyssier.
\newblock {\'E}quation homologique et cycles asymptotiques d'une
  singularit{\'e} n{\oe}ud-col.
\newblock {\em Bulletin des Sciences Math{\'e}matiques}, 128(3):167--187, 2004.

\bibitem[Yos85]{Yoshida85}
Setsuji Yoshida.
\newblock {$2$}-parameter family of solutions for {P}ainlev\'e equations
  ({I})--({V}) at an irregular singular point.
\newblock {\em Funkcial. Ekvac.}, 28(2):233--248, 1985.

\end{thebibliography}

\end{document}